  \tikzstyle{perspective adjusted}=[%
\tikzstyle{perspective nqgs}=[%
\newtheorem{lemma}{Lemma}[section]
\newtheorem{theorem}[lemma]{Theorem}
\newtheorem{corollary}[lemma]{Corollary}
\newtheorem{proposition}[lemma]{Proposition}
\newtheorem{conjecture}[lemma]{Conjecture}
\newtheorem{definition}[lemma]{Definition}
\theoremstyle{definition}
\newtheorem{remark}[lemma]{Remark}
\newtheorem{example}[lemma]{Example}
\theoremstyle{remark}
\newtheorem*{characterization1}{Characterization 1}
\newtheorem*{characterization2}{Characterization 2}
\newtheorem*{characterization3}{Characterization 3}
\newcommand{\CC}{\ensuremath{\mathbb{C}}} 
\newcommand{\NN}{\ensuremath{\mathbb{N}}} 
\newcommand{\QQ}{\ensuremath{\mathbb{Q}}} 
\newcommand{\RR}{\ensuremath{\mathbb{R}}} 
\newcommand{\ZZ}{\ensuremath{\mathbb{Z}}} 
\newcommand{\pr}{\ensuremath{\mathbb{P}}} 
\newcommand{\A}{\ensuremath{\mathcal{A}}}
\newcommand\transpose{{\rm T}}
\newcommand\el{\mathcal L}
\newcommand\osc[1]{{\Bbb Osc }^{#1}}
\newcommand{\adP}[1]{P^{(#1)}}
\newcommand\up[1]{\left\lceil #1 \right\rceil}
\newcommand{\pro}[2]{\langle #1, #2 \rangle}
\newcommand{\ad}[2]{{#1}^{(#2)}}
\newcommand{\nf}{\tau}
\newcommand{\NF}{\mathcal{N}}
\providecommand{\core}{\mathop{\rm core}\nolimits}
\newcommand{\into}{\hookrightarrow}
\renewcommand{\geq}{\geqslant}
\renewcommand{\leq}{\leqslant}
\newcommand{\iso}{\cong}
\DeclareMathOperator{\conv}{Conv}
\DeclareMathOperator{\cod}{codim}
\DeclareMathOperator{\Vol}{Vol}
\DeclareMathOperator{\Eu}{Eu}
\DeclareMathOperator{\cd}{codeg}
\DeclareMathOperator{\Cayley}{Cayley}
\DeclareMathOperator{\Span}{Span}
\newcommand{\Sig}{\Sigma}
\begin{document}

\title[Linear toric fibrations]{Linear Toric Fibrations}

\author[S.~Di~Rocco]{Sandra Di Rocco}
\address{Sandra Di Rocco\\ Department of Mathematics\\ Royal Institute of
  Technology (KTH)\\ 10044 Stockholm\\ Sweden}
\email{\href{mailto:dirocco@kth.se}{dirocco@kth.se}}
\urladdr{\href{http://www.math.kth.se/~dirocco}%
  {www.math.kth.se/~dirocco}}


\date{Fall 2013}

\begin{abstract}
Polarized toric varieties which are birationally equivalent  to projective toric bundles are associated to a class of polytopes called Cayley polytopes. Their geometry and combinatorics have a fruitful interplay leading to fundamental insight in both directions. These
notes will illustrate geometrical phenomena, in algebraic geometry and
neighboring fields, which are characterized by a Cayley structure. Examples
are projective duality of toric varieties and polyhedral adjunction theory.

\vspace{.3in}

\noindent
{\bf Acknowledgments.} The author was supported by a  grant from the
Swedish Research Council (VR). Special thanks to A. Lundman, B. Nill and B. Sturmfels for 
reading a preliminary version of the notes.\\
\ \\
\hfill Stockholm, Fall 2013\\ 
\hfill Sandra Di Rocco

\end{abstract}

\maketitle
\tableofcontents

\section{Introduction}
These notes are based on three lectures given at the 2013 CIME/CIRM summer school
{\em Combinatorial Algebraic Geometry}. 

 The purpose of this series of lectures is to introduce the notion of a {\em toric fibration} and to give  its geometrical and combinatorial characterizations. 
 
Toric fibrations $f:X\to Y,$ together with a choice of an ample line bundle $L$ on $X$ are associated to  convex polytopes  called  {\em Cayley sums.} Such a polytope is a convex polytope $P\subset \RR^n$  obtained by assembling a number of lower dimensional polytopes $R_i,$ whose normal fan defines the same toric variety $Y.$ Let $\RR^n=M\otimes \RR,$ for a lattice $M.$ The building-blocks $R_i$ are glued together following their image via   a surjective map of lattices $\pi:M\to\Lambda,$ see Definition \ref{defCayley}.  In particular the normal fan of the polytope $\pi(P)$ defines the generic fibre of the map $f.$  We will denote Cayley sums  by ${\rm Cayley}(R_0,\ldots,R_t)_{\pi,Y}.$
Our aim is to illustrate how classical notions in projective geometry are captured by certain properties of the associated Cayley sum. 

When the image polytope $\pi(P)$ is a unimodular simplex $\Delta_k$ the generic fibre of the fibration $f$ is a projective space $\pr^k$ embedded linearly, i.e. $L|_F={\mathcal O}_{\pr^k}(1).$ For this reason the fibration is called  a {\it linear toric fibration}.  The following  example  illustrates a linear toric fibration and the representation of the associated polytope as a Cayley sum.\begin{center}
{\small
\begin{tikzpicture}

\fill (0,0,0) circle (2pt) node[above]{} ;
\fill (1,0,0) circle (1pt) node[above]{} ;
\fill (2,0,0) circle (1pt) node[above]{} ;
\fill (3,0,0) circle (2pt) node[above]{} ;
\fill (0,0,1) circle (1pt) node[above]{} ;
\fill (0,0,2) circle (1pt) node[above]{} ;
\fill (0,0,3) circle (2pt) node[above]{} ;
\fill (1,0,1) circle (1pt) node[above]{} ;
\fill (1,0,2) circle (1pt) node[above]{} ;
\fill (1,0,3) circle (1pt) node[above]{} ;
\fill (2,0,1) circle (1pt) node[above]{} ;
\fill (2,0,2) circle (1pt) node[above]{} ;
\fill (2,0,3) circle (1pt) node[above]{} ;
\fill (3,0,1) circle (1pt) node[above]{} ;
\fill (3,0,2) circle (1pt) node[above]{} ;
\fill (3,0,3) circle (2pt) node[above]{} ;
\fill (0,1,0) circle (2pt) node[above]{} ;
\fill (1,1,0) circle (2pt) node[above]{} ;
\fill (0,1,1) circle (2pt) node[above]{} ;
\fill (1,1,1) circle (2pt) node[above]{} ;
\draw[->]  (3.5,0,0) -- (4, 0,0);
\node [above] at (3.8,0.1,0) {$\pi$};
\draw (0,1,0)--(1,1,0);
\draw (1,1,0)--(1,1,1);
\draw (1,1,1)--(0,1,1);
\draw (0,1,0)--(0,1,1);
\draw[dashed] (0,1,0)--(0,0,0);
\draw (0,1,1)--(0,0,3);
\draw (1,1,1)--(3,0,3);
\draw (1,1,0)--(3,0,0);
\draw (0,0,3)--(3,0,3);
\draw (3,0,3)--(3,0,0);
\draw[dashed] (3,0,0)--(0,0,0);
\draw[dashed] (0,0,0)--(0,0,3);
\fill (5,2,0) circle (1pt) node[above]{} ;
\fill (5,0,0) circle (2pt) node[above]{} ;
\fill (5,1,0) circle (2pt) node[above]{} ;
\fill (5,-1,0) circle (1pt) node[above]{} ;
\draw (5.,0,0)--(5,1,0);
\node [above] at (-4,0,0) {$\Small f:\pr({\mathcal O}_{\pr^1\times\pr^1}(1,1)\oplus {\mathcal O}_{\pr^1\times\pr^1}(3,3))\to \pr^2$} ;

\end{tikzpicture}}
\end{center}

Section \ref{Lec1} will be devoted to define these concepts and to give the most relevant examples. In the following two sections we will present two characterizations of Cayley sums corresponding to linear toric fibrations.  In both cases there are rich and interesting connections with classical projective geometry.
 
 Section \ref{Lec2} discusses discriminants of polynomials. A polynomial supported on a subset $\A\subset\ZZ^n$ is a polynomial in $n$ variables $x=(x_1,\ldots,x_n)$ of the form $p_\A=\sum_{a\in\A} c_ax^{a}.$ The $\A$-discriminant is again a polynomial in $|\A|$ variables, $\Delta_\A(c_a),$ vanishing whenever the corresponding polynomial had at least one singularity in the torus $(\CC^*)^n.$ Understanding the existence and in that case the degree of the discriminant polynomial, for given classes of point-configurations $\A$, is highly desirable.
Finite subsets $\A\subset\ZZ^n$ define toric projective varieties, $X_\A\subset \pr^{|\A|-1}.$ It is classical in Algebraic Geometry to associate to a given embedding, $X\subset \pr^m,$ the variety parametrizing hyperplanes singular along $X.$ This variety is called the dual variety and it is denoted by $X^\vee.$ Understanding when the codimension of the dual variety is higher that one and giving efficient formulas for its degree is a long standing problem. We will see that projective duality is a useful  tool for describing  the discriminants $\Delta_\A$ when the associated polytope $\conv(\A)$ is smooth or simple. In fact the case when $\Delta_\A=1$ is completely characterized by Cayley sums and thus by toric fibrations. 

In the non singular case the following holds.
 \begin{characterization1} If $P_\A=\conv(\A)$ is a smooth polytope then 
 the following assertions  are equivalent:
 \begin{enumerate}
  \item $P_\A={\rm Cayley}_{\pi,Y}(R_0,\ldots,R_t)$ with $t\geq max(2, \frac{n+1}{2}).$
  \item $\cod(X_\A^\vee)>1.$
  \item $\Delta_\A=1.$
\end{enumerate}
\end{characterization1} 

When the codimension of $X_\A^\vee$ is one then its degree is given by an alternating sum of volumes of the faces of the polytope $P_\A.$ We will see that this formula corresponds the the top Chern class of the so called first jet bundle. This interpretation has a useful consequence. When the codimension of $X_\A^\vee$ is higher than one this Chern class has to vanish. This leads to another characterization of Cayley sums.

 \begin{characterization2} If $P_\A=\conv(\A)$ is a smooth polytope then 
 the following assertions  are equivalent:
 \begin{enumerate}
  \item $P_\A={\rm Cayley}_{\pi,Y}(R_0,\ldots,R_t)$ with $t\geq max(2, \frac{n+1}{2}).$
  \item $\sum_{\emptyset\neq F\prec P_\A} (-1)^{\cod(F)} (\dim(F)+1)!\Vol(F)=0.$
\end{enumerate}
\end{characterization2}

 In Section \ref{Lec3} we discuss  the problem of  classifying convex polytopes and algebraic varieties. A classification is typically done via 
 invariants.
  In recent years much attention has been concentrated on  the notion of codegree of a convex polytope. 
  $$\cd(P)=min_\ZZ\{ t | tP\text{ has interior lattice points}\}$$
The unimodular simplex for example has $\cd(\Delta_n)=n+1.$ Batyrev and Nill conjectured that imposing this invariant to be large should force the polytope to be a Cayley sum.

It turned out that a $\QQ$-version of this invariant, what we denote by $\mu(P)$, corresponds to a classical invariant in classification theory of algebraic varieties, called the log-canonical threshold. Let $(X_P,\el_P)$ be the toric variety  and ample line bundle associated to the polytope $P.$ The canonical threshold $\mu(\el_P)$ and the nef-value $\tau(\el_P)$ are the invariants used heavily in the classification theory of Gorenstein algebraic varieties. In particular Beltrametti-Sommese-Wisniewski conjectured that imposing $\mu(\el_P)$ to be large should force the variety to have the structure of a fibration.

Again in the toric setting we will see that these two stories intersect making it possible to
prove the above conjectures, at least in the smooth case, and leading to yet another characterization of Cayley sums.

 \begin{characterization3}  Let $P$ be a smooth polytope.
 The following assertions  are equivalent:
 \begin{enumerate}
\item $\cd(P)\geq (n+3)/2.$
\item $P$ is  isomorphic to a Cayley sum ${\rm Cayley}(R_0,\ldots,R_t)_{\pi,Y}$ where $t+1=\cd(P)$ with $k>\frac{n}{2}.$
\item $\mu(\el_P)=\tau(\el_P)\geq (n+3)/2.$
\end{enumerate}
  \end{characterization3}

In fact the characterizations  above extend to more general classes of polytopes, not necessarily smooth, as we explain in Sections \ref{Lec2} and \ref{Lec3}.
Section \ref{final} is devoted to give a complete proof of these characterizations.
\section{Conventions and notation}
We assume basic knowledge of toric geometry and refer to \cite{EW,FU,ODA} for the necessary background on toric varieties. We will moreover assume some knowledge of projective algebraic geometry. We refer the reader to 
\cite{HA, FUb} for further details.
Throughout this paper, we work over the  field of complex numbers $\CC$. By a polarized variety we mean a pair $(X,L)$ where $X$ is an algebraic variety and $L$ is an ample line bundle on $X.$

\subsection{Toric geometry.} In this note a toric variety, $X,$ is always assumed to be normal and thus defined by a fan $\Sig_X\subset N\otimes\RR$ for a lattice $ N.$ By $\Sig_X(n)$ we will denote the collection 
of $n$-dimensional cones of $\Sig_X.$ The invariant sub-variety of codimension $t$ associated to a cone $\sigma\in\Sigma(t)$ will be denoted by $V(\sigma).$

For a lattice $\Delta$ we set $\Delta_\RR=\Delta\otimes_\ZZ \RR.$ We denote by $\Delta^{\vee}=Hom(\Delta,\ZZ)$ the dual lattice. If $\pi: \Delta\to \Gamma$ is a morphism of lattices we denote by $\pi_\RR:\Delta_\RR\to \Gamma_\RR$ the induced $\RR$-homomorphism.
By a lattice polytope  $P\subset \Delta_\RR$ we mean a polytope with vertices in $\Delta.$

Let $P\subset\RR^n$ be a lattice polytope of dimension $n.$ Consider the graded semigroup $\Pi_P$ generated by $(\{1\}\times P)\cap (\NN\times \ZZ^n).$ The polarized variety $(Proj(\CC[\Pi_P]),{\mathcal O}(1))$ is a toric variety associated to the polytope $P$. It will be sometimes denoted by $(X_P,L_P).$ Notice that the toric variety $X_P$ is defined by the (inner) normal fan of $P.$ Vice versa the symbol $P_{(X,L)}$ will denote the lattice polytope associated to a polarized toric variety $(X,L).$

Two polytopes are said to be {\it normally equivalent} if their normal fans are isomorphic. 

The symbol $\Delta_n$ denotes the smooth (unimodular) simplex of dimension $n.$ Recall that an $n$-dimensional polytope is {\em simple} if  through every vertex pass exactly $n$ edges. A lattice polytope is {\em smooth} if it is simple and the primitive vectors of the edges through every vertex form a lattice basis. Smooth polytopes are associates to smooth projective varieties. Simple polytopes are associated to $\QQ$-factorial projective varieties.

When the toric variety is defined via a point configuration $\A\subset\ZZ^n$ we will use the symbol $(X_\A,\el_\A)$ for the associated polarized toric variety and $P_\A=\conv(\A)$ for the associated polytope. The corresponding fan is denoted by $\Sigma_\A.$

\subsection{Vector bundles.} The notion of Chern classes of a vector bundle is an essential tool in some of the proofs. Let $E$ be a vector bundle of rank $k$ over an $n$-dimensional algebraic variety $X$. Recall that the $i$-th Chern class of $E,$  $c_i(E),$ is the class of a codimension $i$ cycle on $X$ modulo rational equivalence. The top Chern class of a rank $k\geq n$ vector bundle is $c_n(E).$  The same symbol $c_n(E)$ will be used to denote the degree of the associated zero-dimensional subvariety. 

The projectivization of a vector bundle plays a fundamental role throughout these notes. Let $S^l(E)$ denote the $l$-th symmetric power of a rank $r+1$ vector bundle $E.$ The projectivization of $E$ is 
$\pr(E)={\rm Proj}(\oplus_{l=0}^\infty S^l(E)).$ It is a projective bundle  with fiber $F=\pr(E)_x=\pr(E_x)=\pr^{r}.$ Let $\pi: \pr(E)\to Y$ be the bundle map. There is a line bundle $\xi$ on $\pr(E),$ called the tautological line bundle, defined by the property that   $\xi_F\cong {\mathcal O}_{\pr^r}(1).$ When $E$ is a vector bundle on a toric variety $Y$ then the projective bundle $\pr(E)$ has the structure of a toric variety if and only if $E=L_1\oplus\ldots\oplus L_k,$ \cite[Lemma 1.1.]{DRS04}. When the line bundles $L_i$ are ample then the tautological line bundle $\xi$ is also ample.

We refer to \cite{FU} for the necessary background on vector bundles and their characteristic classes.

\section{Toric fibrations}\label{Lec1}
\begin{definition}
A {\em toric fibration} is a surjective flat map $f:X\to Y$ with connected fibres  where
\begin{enumerate}
\item $X$ is a toric variety
\item $Y$ is a normal algebraic variety
\item $\dim(Y)<\dim(X).$
\end{enumerate}
\end{definition}
\begin{remark}
A surjective morphism $f : X \to Y$ , with connected
fibers between normal projective varieties, induces a homomorphism from the connected
component of the identity of the automorphism group of $X$ to the connected
component of the identity of the automorphism group of $Y$ , with respect to which
$f$ is equivariant.
It follows that if $f:X\to Y$ is a toric fibration then $Y$ and a general fiber $F$ 
admit the structure of  toric varieties such that $f$ becomes an equivariant morphism.
 Moreover if $X$ is smooth, respectively $\QQ$-factorial,  then   $Y $ and $F$ are also  smooth, respectively $\QQ$-factorial.
\end{remark}
\begin{example}\label{projectivebundle}
Let $L_0,\ldots, L_k$ be line bundles over a toric variety $Y$ such that the tautological line bundle $\xi_E$ defined by the equivariant vector bundle $E=L_0\oplus\ldots\oplus L_k$ is ample. The total space $\pr{}(L_0\oplus\ldots\oplus L_k)$ is a toric variety, see \cite[Lemma 1.1]{DRS04}, and the projective bundle $\pi: \pr{}(L_0\oplus\ldots\oplus L_k)\to Y$ is a toric fibration. 
\end{example}

 \subsection*{Combinatorial characterization} A toric fibration has  the following combinatorial characterization, see \cite[Chapter VI]{EW} for further details.
 Let $N\iso\ZZ^n,$ be a lattice $\Sig\subset N\otimes\RR$ be a fan and $X=X_\Sig,$ the associated toric variety. Let $i:\Delta\into N$ be  a sub-lattice.

\begin{proposition}\cite{EW}
The inclusion $i$ induces a toric fibration, $f:X\to Y$ if and only if:
\begin{enumerate}
\item $\Delta$ is a primitive lattice, i.e. $(\Delta\otimes\RR)\cap N=\Delta.$
\item For every $\sigma\in\Sig(n),$ $\sigma=\tau+\eta,$ where $\tau\in\Delta$ and $\eta\cap\Delta=\{0\}$
(i.e. $\Sig$ is a split fan).
\end{enumerate}
\end{proposition}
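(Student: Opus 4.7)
The plan is to recognize the two combinatorial conditions as direct translations of the conditions for the lattice projection $\pi: N \to N/\Delta$ to define a toric morphism that is also a fibration. The inclusion $i: \Delta \hookrightarrow N$ gives rise to the quotient map $\pi$, and condition (a) is exactly what is needed for $N_Y := N/\Delta$ to be torsion-free (hence a lattice), which is the bare minimum for $\pi$ to define a morphism of toric varieties at all. A toric morphism $X_\Sig \to X_{\Sig_Y}$ then arises from $\pi$ provided every cone of $\Sig$ lands inside some cone of a fan $\Sig_Y$ on $N_Y$, so the proposition reduces to showing that condition (b) is exactly what allows such a $\Sig_Y$ to be constructed with the resulting morphism a fibration.

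For the direction (a) and (b) imply fibration, I would construct $\Sig_Y$ as the collection of cones $\pi(\eta)$ together with all their faces, as $\sigma = \tau + \eta$ ranges over decompositions of maximal cones given by (b). Since $\eta \cap \Delta_\RR = \{0\}$ (primitivity upgrades the condition $\eta \cap \Delta = \{0\}$ to the real level), the projection $\pi|_\eta$ is injective and each $\pi(\eta)$ is a strongly convex rational cone of dimension $\dim \eta$. To check the result is a genuine fan, one observes that the decomposition $\sigma = \tau + \eta$ is unique with $\tau = \sigma \cap \Delta_\RR$, so on overlaps of maximal cones the decompositions agree. The induced morphism $f: X \to Y := X_{\Sig_Y}$ is then surjective, has connected general fibre (coming from the complete fan in $\Delta_\RR$ formed by the $\tau$'s), and is flat because locally $f$ looks like the projection of a product of affine toric pieces $U_\tau \times U_{\pi(\eta)} \to U_{\pi(\eta)}$.

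For the converse, suppose $i$ induces a toric fibration $f$. The remark supplies a toric structure on $Y$ making $f$ equivariant, so $f$ is determined by a surjective lattice map $\phi: N \to N_Y$ compatible with the fans, and $\Delta = \ker \phi$. Property (a) is automatic because kernels of surjective lattice maps are saturated. For (b), pick $\sigma \in \Sig(n)$: surjectivity of $f$ combined with flatness forces $\phi(\sigma)$ to equal a cone $\sigma_Y \in \Sig_Y$ of dimension $\dim Y$. Flatness of the induced local toric map $U_\sigma \to U_{\sigma_Y}$, combined with equivariance, forces the coordinate algebra of $U_\sigma$ to be a polynomial extension in the vertical variables of the coordinate algebra of $U_{\sigma_Y}$; this translates back to a splitting $\sigma = \tau + \eta$ with $\tau = \sigma \cap \Delta_\RR$ and $\eta$ spanned by canonical lifts of the ray generators of $\sigma_Y$, which automatically satisfies $\eta \cap \Delta = \{0\}$.

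The main obstacle lies in the local-product step of the converse: producing $\eta$ as a rational cone in $N$ whose lattice generators give a splitting compatible with $\phi$, rather than merely an $\RR$-subspace of $N_\RR$ complementary to $\Delta_\RR$. Without flatness one only obtains the inclusion $\phi(\sigma) \subseteq \sigma_Y$ together with a vector-space decomposition; promoting this to an honest cone decomposition in the lattice $N$ requires the local affine toric piece to split as a product, which is the toric-geometric incarnation of flatness for equivariant morphisms of affine toric varieties.
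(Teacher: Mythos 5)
First, note that the paper itself does not prove this proposition: it quotes it from \cite{EW} and only sketches the forward construction (project the fan along $N\to N/\Delta$; the fibre is the toric variety of $\Sigma\cap\Delta_\RR$), so your attempt has to be measured against the standard proof rather than an in-paper argument. Your forward direction follows that same route, but the two load-bearing steps are asserted rather than proved. (i) The ``uniqueness of the decomposition'' only pins down $\tau=\sigma\cap\Delta_\RR$; the complement $\eta$ is far from unique, and in any case the point to be checked is that the images $\pi(\sigma)$ of \emph{different} maximal cones intersect along common faces --- images of fans under lattice projections are in general not fans, and your remark about overlaps does not address this. (ii) Flatness is claimed via a local product $U_\sigma\cong U_\tau\times U_{\pi(\eta)}$, but that product structure requires a lattice-adapted choice of $\eta$ (one whose lattice points map isomorphically onto those of $\pi(\sigma)$), and it does not follow from conditions (a),(b) as literally stated: take $N=\ZZ^3$, $\Delta=\ZZ e_3$, and $\sigma$ the cone over a square with rays $(1,0,0),(0,1,0),(1,1,1),(1,1,-1)$, split as $\sigma=\{0\}+\sigma$ with $\eta\cap\Delta=\{0\}$; the induced map $U_\sigma\to\CC^2$ has a two-dimensional fibre over the origin while the general fibre is one-dimensional, so it is not flat. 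Any correct proof must therefore bring in the completeness of the fibre fan $\Sigma\cap\Delta_\RR$ (implicit in \cite{EW} and in the paper's polarized setting) and must \emph{construct} the splitting $U_\sigma\cong U_\tau\times U_{\pi(\eta)}$; the same completeness is what gives connectedness of \emph{all} fibres, not just the general one, which the definition of toric fibration requires and which you assume without argument.

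In the converse the difficulties are similar. The claim that (a) is ``automatic because kernels of surjective lattice maps are saturated'' is circular: the morphism induced by $i$ only depends on the saturation $\Delta_\RR\cap N$, so identifying $\Delta$ with the kernel of the lattice map underlying $f$ is exactly condition (a), not a consequence of generalities about kernels. The assertions that flatness forces $\phi(\sigma)$ to be a full-dimensional cone of $\Sigma_Y$ and forces the semigroup algebra of $U_\sigma$ to be a product (``polynomial extension in the vertical variables'') of that of $U_{\sigma_Y}$, from which the splitting $\sigma=\tau+\eta$ with $\eta\cap\Delta=\{0\}$ would follow, are precisely the nontrivial content of the theorem proved in \cite{EW}; you yourself flag this local-product step as ``the main obstacle'' and leave it unresolved. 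As it stands, the proposal correctly identifies where the work lies but does not carry it out in either direction.
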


We briefly outline the construction. The projection $\pi: N\to N/\Delta$ induces  a map of fans $\Sig \to \pi(\Sig)$ and thus a map of toric varieties $f:X\to Y.$ The general fiber $F$ is a toric variety defined by the fan
$\Sig_F=\{\sigma\in\Sig\cap\Delta\}.$ The invariant varieties $V(\tau)$ in $X,$ where $\tau\in\Sig$ is a maximal-dimensional cone in $\Sig_F,$ are called invariant sections of the fibration.
The subvariety $V(\tau)$ is the invariant section passing through the fixpoint of $F$ corresponding to the cone $\tau\in\Sig_F.$  Observe that they are all isomorphic, as toric varieties, to $Y.$
\begin{example}
In  Example \ref{projectivebundle} let $\Gamma\subset \RR^n$ be the fan defining $Y,$ and let $D_1,\ldots,D_s$ be the generators of $Pic(Y)$ associated to the rays $\eta_i,\ldots,\eta_s\subset \Gamma.$  The line bundle $L_i$ can be written as $L_i=\sum \phi_i(\eta_j)D_j$ where $\phi_i: \Gamma_\RR\to \RR$ are piecewise linear functions.  Let $e_1,\ldots,e_k\in \ZZ^k$ be a lattice basis and let $e_0=-e_1-\ldots-e_k.$ One can define a map:
$$\psi:\RR^{n}\to \RR^{n+k} \text{ as } \psi(v)=(v, \sum \phi_i(v)e_i).$$
Consider now the  fan $\Sigma'\subset\RR^{n+k}$ given by the image of $\Gamma$ under $\psi:$
$\Sigma'=\{\psi(\sigma), \sigma\subset\Gamma\}.$ Let $\Pi\subset\ZZ^k$ be the fan defining $\pr^{k}.$
The fan $\Sigma=\{\sigma'+\tau | \sigma'\in\Sigma', \tau\in \Pi\}$
is a split fan, defining the toric fibration $\pi: \pr{}(L_0\oplus\ldots\oplus L_k)\to Y.$
 See also \cite[Proposition 1.33]{ODAb}.\end{example}
\begin{definition}
A {\em polarized toric fibration} is a triple $(f:X\to Y, L),$ where $f$ is a toric fibration and $L$ is an ample line bundle on $X.$
\end{definition}

 Observe that for a general fiber $F,$ the pair $(F, L|_F)$ is also a polarized toric variety. It follow that both pairs $(X,L)$ and $(F, L|_F)$  define lattice polytopes $P_{(X,L)}, P_{(F, L|_F)}$. The polytope $P_{(X,L)}$ is in fact a ``twisted sum" of a finite number of lattice polytopes fibering over $P_{(F, L|_F)}.$

\begin{definition}\label{defCayley}
Let $R_0,\ldots, R_k \subset M_\RR$ be  lattice polytopes and let $k\geq 1.$ Let $\pi:M\to\Lambda$ be a surjective map of lattices such that $\pi_\RR(R_i)=v_i$ and  such that$v_0,\cdots, v_k$ are distinct vertices of $\conv(v_0,\ldots,v_k).$ We will call a {\em Cayley $\pi$-twisted sum (or simply a Cayley sum) of $R_0,\ldots,R_k$} a polytope which is affinely isomorphic to $\conv(R_0,\ldots,R_k).$ 
$$\text{We will denote it by: }[R_0\star\ldots\star R_k]_\pi.$$

If the polytopes $R_i$ are additionally  normally equivalent, i.e. they define the same normal fan $\Sig_Y,$ we will
denote the Cayley sum by:
$$\Cayley(R_0,\ldots,R_k)_{(\pi,Y)}.$$
\end{definition}
 We will see that these are the polytopes that are associated to  polarized toric fibrations. 
 \begin{proposition}\cite{CDR08}
Let $X=X_\Sig$  be a toric variety of dimension $n,$ where $\Sig\subset N_\RR\iso\RR^n,$  and let $i:\Delta\into N$ be  a sublattice. Let $L$ be an ample line bundle on $X.$ Then the inclusion $i$ induces a  polarized toric fibration   $(f:X\to Y, L)$ if and only if $P_{(X,L)}=\Cayley(R_0,\ldots,R_k)_{(\pi,Y)},$ where $R_0,\ldots,R_k$ are normally equivalent polytopes on $Y$ and  $\pi:M\to\Lambda$ is the lattice map dual to $i.$
 \end{proposition}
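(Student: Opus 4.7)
The plan is to exploit the standard face--subvariety dictionary for polarized toric varieties, combined with the split-fan characterization of toric fibrations recalled just above. Write $P=P_{(X,L)}\subset M_\RR$. The primitive sublattice $i:\Delta\into N$ is dual to the surjection $\pi:M\to\Lambda$, and $\ker(\pi)\subset M$ is canonically identified with the dual of $N/\Delta$, i.e.\ with the lattice in which polytopes associated to $Y$ live.

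For the forward implication, suppose $(f:X\to Y,L)$ is the polarized fibration induced by $i$. The general fiber $F$ has fan $\Sigma_F=\{\sigma\in\Sigma:\sigma\subset\Delta_\RR\}$, and the polarized fiber $(F,L|_F)$ has polytope $\pi_\RR(P)\subset\Lambda_\RR$; let $v_0,\ldots,v_k$ be its vertices. These are in bijection with the maximal cones $\tau_0,\ldots,\tau_k\in\Sigma_F$, and hence with the invariant sections $V(\tau_i)$ of $f$, each isomorphic to $Y$ because $\Sigma$ is split. Setting $R_i=\pi_\RR^{-1}(v_i)\cap P$, the face--subvariety dictionary identifies $R_i$ (after translation by any lift of $v_i$ into $\ker(\pi)_\RR$) with the polytope of the polarized variety $(V(\tau_i),L|_{V(\tau_i)})$. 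In particular every $R_i$ has normal fan $\Sigma_Y$, so they are normally equivalent. Since every vertex of $P$ projects to some $v_i$ and hence lies in the corresponding $R_i$, one obtains $P=\conv(R_0\cup\cdots\cup R_k)=\Cayley(R_0,\ldots,R_k)_{(\pi,Y)}$.

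For the converse, suppose $P=\Cayley(R_0,\ldots,R_k)_{(\pi,Y)}$ with the $R_i$ normally equivalent and sharing the common normal fan $\Sigma_Y$. I would verify the two axioms in the combinatorial characterization of toric fibrations. Primitivity of $\Delta\subset N$ follows from the surjectivity of the dual map $\pi$ onto a lattice. For the splitting, take a maximal cone $\sigma\in\Sigma(n)$; it is dual to a vertex $v\in P$, which is a vertex of a unique $R_i$, with image $v_i=\pi_\RR(v)$. The facets of $P$ containing $v$ split into two families: those that contain $R_i$ entirely, whose outer normals lie in $\Delta_\RR$ and span a cone $\tau\subset\Delta_\RR$ matching the normal cone of $v_i$ in the simplex $\conv(v_0,\ldots,v_k)$; and those that meet $R_i$ in a proper face, whose outer normals, via a splitting $N_\RR=\Delta_\RR\oplus(N/\Delta)_\RR$, span a cone $\eta$ with $\eta\cap\Delta_\RR=\{0\}$ matching the normal cone of $v$ inside $R_i$. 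A dimension count using $\dim R_i=\dim Y=n-\dim F$ and $\dim\tau=\dim\Delta=\dim F$ then gives $\sigma=\tau+\eta$.

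The main obstacle is carrying out this bookkeeping in the converse cleanly, since the outer normals of $P$ interact with both the Cayley structure in $\Lambda_\RR$ and the common normal fan $\Sigma_Y$. The neatest way is to prove the stronger statement that $\Sigma_P$ coincides with the product (in the fan-theoretic sense) of the normal fan of $\conv(v_0,\ldots,v_k)$ and $\Sigma_Y$ with respect to the splitting $N_\RR=\Delta_\RR\oplus(N/\Delta)_\RR$; once this fan-decomposition is established, both axioms of the split-fan characterization drop out, and since $L$ is ample by hypothesis, one recovers a polarized toric fibration $(f:X\to Y,L)$.
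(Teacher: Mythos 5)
Your forward direction is essentially the paper's argument: identify the vertices $v_0,\ldots,v_k$ of the fiber polytope with the maximal cones of $\Sigma_F$ and hence with the invariant sections $V(\tau_i)\cong Y$, translate the corresponding faces $R_i$ into $\ker\pi_\RR$ to see they are the polytopes of $(V(\tau_i),L|_{V(\tau_i)})$ and hence normally equivalent, and conclude $P=\conv(R_0,\ldots,R_k)$. The only thing you assert rather than prove is the identification $P_{(F,L|_F)}=\pi_\RR(P)$ (the paper obtains it by writing $L=\mathcal{O}_X(\sum a_x D_x)$ and restricting to $F$, where $D_x\cap F\neq\emptyset$ iff $x\in\Delta$); this is a minor omission.

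The converse, however, has a genuine gap. The split-fan criterion needs, for each maximal cone $\sigma$, a decomposition $\sigma=\tau+\eta$ in which both $\tau$ and $\eta$ are cones \emph{of the fan} $\Sigma_X$, with $\tau\subset\Delta_\RR$ and $\eta\cap\Delta_\RR=\{0\}$. Your $\tau$ is fine: it is the normal cone of the face $R_i$ (after noting, as the paper does, that $R_i$ is a face of $P$ because $v_i$ is a vertex of $\pi_\RR(P)$). But your $\eta$, the cone spanned by the normals of the facets through $v$ that cut $R_i$ in a proper face, is not justified: an arbitrary subset of the facet normals at a vertex need not span a face of $\sigma$, so you have not shown $\eta\in\Sigma_X$; and the claim $\eta\cap\Delta_\RR=\{0\}$ ``via a splitting $N_\RR=\Delta_\RR\oplus(N/\Delta)_\RR$'' presupposes a complement of $\Delta_\RR$ containing those normals, and no canonical complement exists --- producing one is exactly the content of the proof, since a positive combination of normals not lying in $\Delta_\RR$ can perfectly well land in $\Delta_\RR$. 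The missing idea is the paper's construction of the cross-sectional faces: for each maximal cone $\gamma\in\Sigma_Y$ take the vertices $w_0,\ldots,w_k$ of $R_0,\ldots,R_k$ corresponding to $\gamma$ and prove that $Q=\conv(w_0,\ldots,w_k)$ is a face of $P$, by choosing $u\in H^{\perp}$ (where $H$ is the direction space of $\mathrm{Aff}(Q)$, giving $M_\RR=H\oplus\ker\pi_\RR$ and dually $N_\RR=\Delta_\RR\oplus H^{\perp}$) whose image lies in the interior of $\gamma$, and checking $\langle u,z\rangle\geq m_0$ for all $z\in P$ with equality exactly on $Q$. With $Q$ in hand, $\eta$ is by definition the normal cone of a face, hence a cone of $\Sigma_X$; it meets $\Delta_\RR$ only in $0$ because $Q$ surjects onto $\pi_\RR(P)$; and $\sigma=\tau+\eta$ follows since $w=Q\cap R_i$. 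Your closing paragraph concedes the difficulty and proposes instead to prove that $\Sigma_P$ is the product of $\Sigma_Y$ and the normal fan of $\conv(v_0,\ldots,v_k)$, but that is a restatement of the goal, not an argument; as written the converse is incomplete.
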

 \begin{proof}
\noindent We first prove the implication "$\Rightarrow$".

Assume that $i\colon\Delta\hookrightarrow N$ 
induces a toric fibration $f\colon X\to Y$ and consider the polarization $L$ on $X.$ We will prove that $P_{(X,L)}=\Cayley(R_0,\ldots,R_k)_{(\pi,Y)}$ for some normally equivalent polytopes $R_0,\ldots,R_k.$ 

Notice first that the fact that $\Delta$ is a primitive sub-lattice of $N$ implies that the dual map $\pi:M\to\Lambda$ is a surjection.
Let $F$ be a general fiber of $f,$ and let 
$S:=P_{(F,L_{|F})}\subset\Lambda_{\RR}$. 
Denote by $v_0,\dotsc,v_k$ the vertices of
$S$. Every $v_i$ corresponds to a fixed point of $F$; call
$Y_i=V(\tau_i)$ the invariant section of $f$ passing through that
point. Note that $\tau_i\in\Sigma_X$, $\dim\tau_i=\dim F$ 
and $\tau_i\subset\Delta_{\RR}.$  Let $R_i$ be the face of $P_{(X,L)}$
corresponding to $Y_i$. 

Observe that $Aff(\tau_i)=\Delta_{\RR}$, so that
$Aff(\tau_i^{\perp})=\Delta_{\RR}^{\perp}=\ker\pi_{\RR}$.
 Then there exists $u_i\in M$ such that:
\begin{itemize}
\item $Aff(R_i)+u_i=\ker(\pi_{\RR})$;
\item $R_i+u_i=P_{(Y_i,L_{|Y_i})}$.
\end{itemize}
This says that $R_0,\dotsc,R_k$ 
are normally equivalent (because
every $Y_i$ is isomorphic to $Y$), and that $\pi_{\RR}(R_i)$ is a
point.  
Since the $Y_i$'s are pairwise disjoint, the same
holds for the $R_i$'s. If $s$ is the number of fixed points of
$Y$, then each $R_i$ has $s$ vertices. On the other hand, we know that
$F$ has $(k+1)$ fixed points, and therefore 
$X$ must
have $s(k+1)$ fixed points. 
So $P_{(X,L)}$ has $s(k+1)$ vertices, namely the
union of all vertices of the $R_i$'s. We can conclude that
$$P_{(X,L)}=\conv(R_0,\ldots,R_k)$$
Let  $D=\sum_{x\in \Sig(1)} a_x D_x$ be an invariant 
Cartier divisor on $X$ such that
$L={\mathcal O}_X(D)$. Since $F$ is a general fiber, 
we have
$D_x\cap F\neq\emptyset$ if and only if
$x\in\Delta$,  and $D_{|F}=\sum_{x\in \Delta} a_{x}
D_{x|F}$.  This implies  that
 $\pi_{\RR}(R_i)=v_i$ and $\pi_{\RR}(P_{(X,L)})=S.$  We conclude that
 $P_{(X,L)}=\Cayley(R_0,\ldots,R_k)_{(\pi,Y)}.$
\smallskip

We now show the other direction: "$\Leftarrow$". 

Assume that $P_{(X,L)}=\Cayley(R_0,\ldots,R_k)_{(\pi,Y)}.$ We will prove that the associated polarized toric variety is a polarized toric fibration. 
First observe that the fact that the dual map $\pi$ is a surjection implies that the sublattice $\Delta$ is primitive. Since $v_i$ is a vertex of $\pi_{\RR}(P_{(X,L)})$,
$R_i$ is a face of $P_{(X,L)}$ for every $i=0,\dotsc,k$.  Let $Y$ be the
projective toric variety defined by the polytopes  
$R_i$. Observe that $Aff(R_i)$ is a translate of $\ker\pi_{\RR}$, and
$(\ker\pi)^{\vee}=N/\Delta$. So the fan $\Sigma_Y$
is contained in $(N/\Delta)_{\RR}$.

Let  $\gamma\in\Sigma_Y(\dim(Y))$ and 
for every $i=0,\dotsc,k$ let $w_i$ be the vertex of $R_i$
corresponding to $\gamma$.
We will  show that
$Q:=\conv(w_0,\ldots,w_k)$ is a face of $P_{(X,L)}$. 

Observe first  that $(\pi_{\RR})_{Aff(Q)}\colon Aff(Q)\to
\Lambda_{\RR}$ is bijective. Let $H$ be the linear subspace
of $M_{\RR}$ which is a translate of $Aff(Q)$. Then we have
$M_{\RR}=H\oplus \ker\pi_{\RR}$.  Dually $N_{\RR}=\Delta_{\RR}\oplus
H^{\perp}$, where $H^{\perp}$ projects isomorphically onto
$(N/\Delta)_{\RR}$. 

Let $u\in H^{\perp}$ be such that its image in $(N/\Delta)_{\QQ}$ is
contained in the interior of $\gamma$.
 Then for every $i=0,\dotsc,k$
we have that (see \cite[\S 1.5]{FU}):
\[\begin{array}{cl}
 (u,x)\geq (u,w_i)&\text{ for every }x\in R_i, \\
 (u,x)= (u,w_i)&\text{ if and only if }x=w_i.$$
 \end{array}\]
 Moreover  $u$ is constant on $Aff(Q)$, namely
there exists $m_0\in\QQ$ such
that $(u,z)=m_0$ for every $z\in Q$.

Any $z\in P$ can be written as
$z=\sum_{i=1}^l \lambda_i
 z_i$, with $z_i\in R_i$, $\lambda_i\geq 0$ and
 $\sum_{i=0}^l\lambda_i=1$.  Then
$$({u},z)=\sum_{i=1}^l \lambda_i (u,z_i)
\geq\sum_{i=1}^l \lambda_i({u},w_i)=
\sum_{i=1}^l \lambda_i m_0=m_0.$$
Moreover, $({u},z)=m_0$ 
if and only if $\lambda_i> 0$ for every $i$ such that
$(u,z_i)=(u,w_i),$ and $\lambda_i=0$ otherwise. This happens if and only if $z\in Q,$ 
implying that $Q$ is a face of $P_{(X,L)}$.

Let $\sigma\in\Sigma_X$ be a cone of maximal dimension, and
let $w$ be the corresponding vertex of
$P_{(X,L)}$. Then $\pi(w)$ is a vertex, say $v_1$, 
of $\pi_{\QQ}(P_{(X,L)})$ and
hence $w$ lies in  
$R_1$. Since $R_{1}$ is also a face of $P_{(X,L)}$, $w$ is a vertex of
$R_1$ and hence it corresponds to a maximal dimensional cone in $\Sigma_Y.$  In each $R_i$, consider the vertex $w_i$ corresponding to
the same cone of $\Sigma_Y.$ 
We set $w_1=w$. We have shown that
$Q:=\conv(w_0,\ldots,w_k)$ 
is a face of $P_{(X,L)}$, and $w=Q\cap R_1$. Now call $\tau$ and $\eta$ the cones of $\Sigma_X$ corresponding
respectively to  $R_1$ and $Q$. It follows that $\sigma=\tau+\eta$,
$\tau\subset\Delta_{\QQ}$, and $\eta\cap\Delta_{\QQ}=\{0\}$.
This concludes the proof.
\end{proof}
The previous proof  shows the following corollary.
\begin{corollary}
Let $(f:X\to Y, L)$ be a  polarized toric fibration  and let  $P_{(X,L)}=\Cayley(R_0,\ldots,R_k)_{(\pi,Y)}$ be the associated polytope.  Let $F$ be a general fiber of the fibration, $Y_0,\ldots, Y_k$ be the invariant sections and $\pi(R_i)=v_i.$ The following holds.
\begin{enumerate}
\item The polarized toric variety $(F, L|_F)$ corresponds to the polytope $P_{(F, L|_F)}=\conv(v_0,\ldots, v_k).$
\item The polarized toric varieties $(Y_i, L_{Y_i})$ correspond to the polytopes 
$$R_0-u_0, \cdots, R_k-u_k,$$ where $u_i\in M$ is a point such that $\pi(u_i)=\pi(R_i).$
\end{enumerate}
\end{corollary}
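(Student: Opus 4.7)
The plan is to extract both claims from the proof of the preceding proposition, where the essential computations have already been carried out; the corollary is more a matter of packaging than of genuinely new argument.

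For part (1), I would note that if $D = \sum_{x\in\Sigma_X(1)} a_x D_x$ is an invariant Cartier divisor with $L = \sO_X(D)$, then because $F$ is a general fiber one has $D_x \cap F \neq \emptyset$ exactly when the ray $x$ lies in $\Delta_\RR$, and the restriction $D|_F$ is the invariant Cartier divisor on $F$ whose polytope is the image $\pi_\RR(P_{(X,L)})$, where $\pi:M\to\Lambda$ is the surjection dual to $\Delta \hookrightarrow N$. Since $P_{(X,L)} = \conv(R_0, \ldots, R_k)$ and $\pi_\RR(R_i) = v_i$ by the definition of a Cayley sum, we get $\pi_\RR(P_{(X,L)}) = \conv(v_0, \ldots, v_k) = S$, which is exactly $P_{(F, L|_F)}$.

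For part (2), the previous proof already produced for each $i$ a vector $u_i' \in M$ with $Aff(R_i) + u_i' = \ker \pi_\RR$ and $R_i + u_i' = P_{(Y_i, L|_{Y_i})}$. After the sign adjustment $u_i := -u_i'$, one has $\pi(u_i) = v_i$ and $R_i - u_i = P_{(Y_i, L|_{Y_i})}$, which is the statement of the corollary. Here $\ker \pi \subset M$ is the character lattice of $Y$ (via the identification $\ker \pi = \Delta^{\perp} = (N/\Delta)^\vee$), and the translated polytope $R_i - u_i$ sits naturally inside this subspace. That each $(Y_i, L|_{Y_i})$ is isomorphic to a common polarized toric variety $(Y,L_Y)$ follows from the normal equivalence of the $R_i$ together with the fact that translations do not change the associated polarization.

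The only real subtlety, which is bookkeeping rather than geometry, is keeping the sign conventions and the lattice identifications between $M$, $\Lambda$, and $\ker \pi_\RR$ consistent throughout; no new combinatorial input beyond what was carried out in the proof of the preceding proposition is required.
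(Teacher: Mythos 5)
Your proposal is correct and follows the paper's own route: the paper gives no separate argument but simply observes that the proof of the preceding proposition already establishes both claims, which is exactly what you do, including the restriction $D|_F$ argument identifying $\pi_\RR(P_{(X,L)})$ with $P_{(F,L|_F)}$ and the translation vectors $u_i$ from that proof. Your remark about the sign adjustment (the proposition's proof uses $R_i+u_i\subset\ker\pi_\RR$ while the corollary writes $R_i-u_i$ with $\pi(u_i)=\pi(R_i)$) is an accurate and harmless piece of bookkeeping that the paper leaves implicit.
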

\begin{example}
The toric surface obtained by blowing up  $\pr^2$ at a fixpoint has the structure of a toric fibration, $\pr({\mathcal O}_{\pr^1}\oplus {\mathcal O}_{\pr^1}(1))\to \pr^1.$ It is often referred to as the Hirzebruch surface ${\mathbb F}_1.$ Consider the polarization given  by the tautological line bundle $\xi=2\phi^*({\mathcal O}_{\pr^2}(1))-E$ where $\phi$ is the blow-up map and $E$ is the exceptional divisor. The associated polytope is $P=\Cayley(\Delta_1, 2\Delta_1),$ see the figure below. 

\begin{center}
\begin{tikzpicture}

\fill (-1,0) circle (1pt) node[above]{} ;
\fill (0,0) circle (1pt) node[above]{} ;
\fill (1,0) circle (1pt) node[above]{} ;
\fill (2,0) circle (1pt) node[above]{} ;
\fill (-1,1) circle (1pt) node[above]{} ;
\fill (0,1) circle (2pt) node[above]{} ;
\fill (1,1) circle (2pt) node[above]{} ;
\fill (2,1) circle (1pt) node[above]{} ;
\fill (-1,2) circle (1pt) node[above]{} ;
\fill (0,2) circle (1pt) node[above]{} ;
\fill (1,2) circle (2pt) node[above]{} ;
\fill (2,2) circle (1pt) node[above]{} ;
\fill (-1,3) circle (1pt) node[above]{} ;
\fill (0,3) circle (2pt) node[above]{} ;
\fill (1,3) circle (1pt) node[above]{} ;
\fill (2,3) circle (1pt) node[above]{} ;
\fill (-1,-2) circle (1pt) node[above]{} ;
\fill (0,-2) circle (2pt) node[above]{} ;
\fill (1,-2) circle (2pt) node[above]{} ;
\fill (2,-2) circle (1pt) node[above]{} ;
\draw[->]  (0.5,0) -- (0.5,-1);

\draw (0,1)--(1,1);
\draw (1,1)--(1,2);
\draw (1,2)--(0,3);
\draw (0,3)--(0,1);
\draw (0,-2)--(1,-2);

\node [above] at (0.7,-0.5) {$\pi$};
\node [above] at (1.3,1.5) {$\Delta_1$};
\node [above] at (-0.3,1.5) {$2\Delta_1$};
\node [above] at (0, -1.9) {$v_1$};
\node [above] at (1,-1.9) {$v_0$};
\end{tikzpicture}
\end{center}
\end{example}

\begin{remark}
The following are important classes of polarized toric fibrations, relevant both  in Combinatorics and Algebraic Geometry. 

\noindent{\bf Projective bundles.} When $\pi(P)=\Delta_t$ the polytope $\Cayley(R_0,\ldots,R_t)_{(\pi,Y)}$ defines the polarized toric fibration $(\pr(L_0\oplus\ldots\oplus L_t)\to Y, \xi),$ where the $L_i$ are ample line bundles on the toric variety $Y$  and $\xi$ is the tautological line bundle.  In particular $L|_F={\mathcal O}_{\pr^t}(1).$ These  fibrations play an important role in the theory of discriminants and resultants of polynomial systems. See  Section \ref{Lec2}  for more details.

\noindent{\bf Mori fibrations.} When $\pi(P)$ is a simplex (not necessarily smooth) the Cayley polytope  $\Cayley(R_0,\ldots,R_k)_{(\pi,Y)}$ defines a Mori fibration, i.e.  a surjective flat map onto a $\QQ$-factorial toric variety whose generic fibre is reduced and has Picard number one. This type of fibrations  are important blocks in the Minimal Model Program for toric varieties. See 
\cite{CDR08} and \cite{Re83} for more details.

\noindent{\bf $\pr^k$-bundles.} When $\pi(P)=k\Delta_t$ then again the variety has the structure of a $\pr^t$-fibration whose general fiber $F$ is embedded as an $k$-Veronese embedding: $(F,L|_F)=(\pr^t, {\mathcal O}_{\pr^t}(k)).$ These fibrations arise in the study of $k$-th toric duality, see \cite{DDRP12}.
\end{remark}

In the polarized toric fibration $(\pr(L_0\oplus\ldots\oplus L_t),\xi)$ the fibres are embedded as linear spaces. For this reason the associated Cayley polytopes $\Cayley(R_0,\ldots,R_t)_{(\pi,Y)}$ can be referred to as {\it linear toric fibrations}.

 \begin{remark} For general Cayley sums,  $[R_0\star\ldots\star R_k]_\pi,$ one has the following geometrical interpretation. Let $(X,L)$ be the associated polarized toric variety and let $Y$ be the toric variety defined by the Minkowski sum $R_0+\ldots+R_k.$ The fan defining $Y$ is a refinement of the normal fans of the $R_i$ for $i=0,\ldots,k.$  Consider the associated  birational maps $\phi_i : Y\to Y_i,$ where $(Y_i, L_i)$ is the polarized toric variety defined by the polytope $R_i.$ The line bundles $H_i=\phi_i^*(L_i)$ are nef line bundles on $Y$  and the polytopes $P_{(Y,H_i)}$ are affinely isomorphic to $R_i.$
In particular  $[R_0\star\ldots\star R_k]_\pi$ is the polytope defined by  the tautological line bundle  on the toric fibration $\pr(H_0\oplus\ldots\oplus H_k)\to Y.$ Notice that in this case the line bundle $\xi$ is not ample.

If we want to relate $[R_0\star\ldots\star R_k]_\pi$ to a polarized toric fibration we need to enlarge the polytopes $R_i$ is order to get an ample tautological line bundle.
 Consider  the polytopes $P_i=P_{(Y,H_i)}+\sum_0^k R_j.$ The normal fan of $P_i$ is isomorphic to the fan defining the common resolution, $Y,$ for $i=0,\ldots, k.$ Hence the polytopes $P_i$ are normally equivalent. Let $(Y,M_i)$ be the polarized toric variety associated to the polytope $P_i.$  One can then define the Cayley sum $\Cayley(P_0,\ldots,P_k)_{(\pi,Y)},$ whose normal fan is in fact a refinement of the one defining $[R_0\star\ldots\star R_k]_\pi.$ Let $(\pr(M_0\oplus\ldots\oplus M_k)\to Y , \xi)$ be the polarized toric fibration associated to $\Cayley(P_0,\ldots,P_k)_{(\pi,Y)}.$  There is a birational morphism  $ \phi: \pr(M_0\oplus\ldots\oplus M_k) \to X.$ 
  \end{remark}
 \begin{example}
 Consider the polytopes $R_0=\Delta_2, R_1=\Delta_1\times\Delta_1$ in $\QQ^2.$ Consider the  projection onto the first component $\pi:\ZZ^3\to \ZZ$ and  $P=\conv(R_0\times \{0\}, R_1\times \{1\}).$ The polytope $P$ is then isomorphic to $[R_0\star R_1]_\pi,$ and $\pi_\QQ(P)=\Delta_1.$
The common refinement defined by $R_0+R_1$ is the fan of the blow up of $\pr^2$ at two fixed points, $\phi:Y\to \pr^2.$ The polytopes $P_0$ defines the polarized toric variety $(Y,\phi^*({\mathcal O}_{\pr^2}(4))-E_1-E_2)$ and the polytope $P_1$ the pair $(Y,\phi^*({\mathcal O}_{\pr^2}(5))-2E_1-2E_2),$where $E_i$ are the exceptional divisors. The polarized toric fibration $(\pr(M_0\oplus M_1)\to Y , \xi)$ is then $$(\pr([\phi^*({\mathcal O}_{\pr^2}(4))-E_1-E_2]\oplus [\phi^*({\mathcal O}_{\pr^2}(5))-2E_1-2E_2])\to Y, \xi).$$ 
\vspace{.2in}

\begin{tikzpicture}
\draw [fill=gray!50,opacity=.5](0,0,0)--(1,0,0)--(1,0,1)--(0,0,1)--(0,0,0);
\draw [fill=brown!50,opacity=.5](0,1,0)--(1,1,0)--(0,1,1)--(0,1,0);
\draw (1,0,0)--(1,1,0);
\draw (1,0,1)--(1,1,0);
\draw (1,0,1)--(0,1,1);
\draw (0,0,1)--(0,1,1);
\draw [dashed](0,0,0)--(0,1,0);
\node [above] at (0.5,1,0) {$R_1$};
\node [below] at (0.6,0.2,1) {$R_0$};
\node [below] at (0.6,-0.4,1) {$[R_0\star R_1]_\pi $};
\end{tikzpicture}
\begin{tikzpicture}
\draw (0,0)--(2,0)--(2,1)--(1,2)--(0,2)--(0,0);
\draw [fill=brown!50,opacity=.5] (2,0)--(3,0)--(3,1)--(1,3)--(0,3)--(0,2)--(1,2)--(2,1)--(2,0);
\draw [fill=gray!50,opacity=.5] (3,1)--(1,3)--(2,3)--(3,2)--(3,1);
\node [above] at (0.8,1,0) {$R_0+R_1$};
\node [above] at (0.5,2,0) {$P_0$};
\node [below] at (2.2,2.5,0) {$P_1$};
\end{tikzpicture}
\begin{tikzpicture}
\draw [fill=brown!50,opacity=.5] (0,0,0)--(3,0,0)--(3,0,2)--(2,0,3)--(0,0,3)--(0,0,0);
\draw [fill=gray!50,opacity=.5] (0,1,0)--(3,1,0)--(3,1,1)--(1,1,3)--(0,1,3)--(0,1,0);
\draw [dashed](0,0,0)--(0,1,0);
\draw (3,0,0)--(3,1,0);
\draw (0,0,3)--(0,1,3);
\draw (3,0,2)--(3,1,1);
\draw (2,0,3)--(1,1,3);
\node [below] at (2.5,-0.7,1) {$\Cayley(P_0, P_1)_{(\pi,Y)} $};
\end{tikzpicture}
\end{example}

 \vspace{.2in}
 
 \noindent\subsection*{Historical Remark}
 The definition of a Cayley polytope originated from what is  "classically" referred to as the {\em Cayley trick}, in connection with the  Resultant and Discriminant of a system of polynomials.
 A system of $n$ polynomials in $n$ variables $x=(x_1,\ldots,x_n),$  $ f_1(x),\ldots, f_n(x),$  is supported on $(\A_1, \A_2, \ldots, \A_n),$ where $ \A_i\subset\ZZ^n$ if  $f_i=\Pi_{a_j\in \A_i}c_jx^{a_j}.$ 
 
 The $(\A_1, \A_2, \ldots, \A_n)$-{\em resultant} (of ) is a polynomial, $R_\A(\ldots,c_j, \ldots),$ in the coefficients $c_j,$  which vanishes whenever the corresponding polynomials have a common zero.
 
The discriminant of a finite subset $\A\subset\ZZ^n,$ $\Delta_\A,$ is also a polynomial $\Delta_\A(\ldots,c_j, \ldots)$ in the variables  $c_j,$ which vanishes whenever the corresponding polynomial supported on $\A, f=\Pi_{a_j\in \A}c_jx^{a_j},$ has a singularity in the torus $(\CC^*)^n.$
\begin{theorem}\cite{GKZ}[Cayley Trick]
The $(\A_1, \A_2, \ldots, \A_n)$-resultant  equals the $\A$-discriminant where
$$\A=(\A_1\times\{0\})\cup (\A_2\times\{e_1\})\cup\ldots\cup (\A_n\times\{e_{n-1}\})\subset\ZZ^{2n-1}$$
where $(e_1,\ldots,e_{n-1})$ is a lattice basis for $\ZZ^{n-1}.$
\end{theorem}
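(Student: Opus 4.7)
The plan is to construct the Cayley auxiliary polynomial whose support equals $\A$, analyze its torus singular locus, and then identify its $\A$-discriminant with the given resultant via equality of vanishing loci plus irreducibility. Form
$$F(x,y) = f_1(x) + y_1 f_2(x) + y_2 f_3(x) + \ldots + y_{n-1} f_n(x)$$
in the $2n-1$ variables $x=(x_1,\ldots,x_n)$ and $y=(y_1,\ldots,y_{n-1})$. The monomials of $F$ are precisely $x^a$ for $a\in\A_1$ and $y_{i-1}x^a$ for $a\in\A_i$ with $i\geq 2$, so the support of $F$ is exactly the Cayley configuration $\A$. The coefficient space of $F$ is canonically identified with the product of coefficient spaces of $f_1,\ldots,f_n$, so $\Delta_\A(F)$ and $R_{\A_1,\ldots,\A_n}(f_1,\ldots,f_n)$ are polynomials in the same variables and it suffices to show they agree up to sign.

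The core computation is the identification of the torus singular locus of $F$ in $(\CC^{\ast})^{2n-1}$. Since $\partial F/\partial y_j = f_{j+1}(x)$, vanishing of the $y$-partial derivatives forces $f_2(x)=\cdots=f_n(x)=0$, and then $F(x,y)=0$ forces $f_1(x)=0$ as well. Thus the projection $(x,y)\mapsto x$ sends any torus singular zero of $F$ to a common zero $x^{\ast}\in(\CC^{\ast})^n$ of the original system. Over such an $x^{\ast}$, the remaining equations $\partial F/\partial x_i=0$ become the linear system
$$J(x^{\ast})^{\transpose}\,(1,y_1,\ldots,y_{n-1})^{\transpose}=0,\qquad J=\bigl(\partial f_k/\partial x_i\bigr)_{i,k},$$
so a torus singular zero of $F$ lying above $x^{\ast}$ exists if and only if $J(x^{\ast})^{\transpose}$ admits a null vector whose first entry is nonzero (and can be normalized to $1$) and whose remaining entries lie in $\CC^{\ast}$.

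Using this, I would establish the set-theoretic equality $\{\Delta_\A=0\}=\{R_{\A_1,\ldots,\A_n}=0\}$ of hypersurfaces in coefficient space. The inclusion $\{\Delta_\A=0\}\subseteq\{R=0\}$ is immediate from the preceding projection. The reverse inclusion is the delicate step: on the resultant hypersurface the linear system $J(x^{\ast})^{\transpose}(1,y)^{\transpose}=0$ is overdetermined ($n$ equations in $n-1$ unknowns after fixing $y_0=1$), and the Cayley combinatorics of $\A$ is designed precisely so that this overdetermined condition is absorbed into the existence of the common zero, producing the required torus null vector on a dense subset of $\{R=0\}$. A clean way to formalize this is to introduce the incidence variety $Z\subset \CC^{|\A|}\times(\CC^{\ast})^{2n-1}$ of pairs (coefficient vector, singular zero of $F$), observe that its first projection has image $\{\Delta_\A=0\}$, and match it with the resultant hypersurface via a fiberwise dimension count.

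Finally, both $\Delta_\A$ and $R_{\A_1,\ldots,\A_n}$ are irreducible polynomials in the coefficients — the former by Horn uniformization of the dual variety $X_\A^\vee$, the latter by classical elimination theory for sparse resultants. Two irreducible polynomials cutting out the same reduced hypersurface coincide up to a nonzero scalar, and this scalar is pinned down to $1$ by evaluating both sides on a one-parameter sparse test family in which all but two of the $\A_i$'s are shrunk to single lattice points, so that both sides degenerate to the same univariate $\A$-discriminant/resultant. The principal obstacle is the reverse inclusion of vanishing loci, that is, establishing generic rank deficiency of $J(x^{\ast})^{\transpose}$ together with a torus null vector on the resultant hypersurface; this is where the specific geometry of the Cayley polytope $\conv(\A)$ is essential, and its cleanest argument invokes the Horn parametrization of $X_\A^\vee$ combined with the Euler-type homogeneity identities satisfied by $F$ under the $y$-scaling.
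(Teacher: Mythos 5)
The paper offers no proof of this theorem; it is quoted from \cite{GKZ}, so the relevant comparison is with the argument there, which your outline does mirror in shape: form the Cayley polynomial $F$, identify its torus singular points with common zeros of the system, pass to an incidence variety, and conclude by irreducibility plus a normalization of the constant. The genuine gap is precisely the step you flag as ``the principal obstacle'', namely the inclusion $\{R_{\A_1,\ldots,\A_n}=0\}\subseteq\{\Delta_\A=0\}$. In the setup you adopt (following the loose wording of the statement: $n$ polynomials in the $n$ torus variables $x$, so that the conditions $\partial F/\partial x_i=0$ give $n$ inhomogeneous linear equations in the $n-1$ unknowns $y$), this step is not merely unproven, it is false: for generic coefficients every common zero $x^{\ast}\in(\CC^{\ast})^n$ is nondegenerate, the $n\times n$ Jacobian $J(x^{\ast})$ is invertible, hence no null vector $(1,y_1,\ldots,y_{n-1})$ exists and $F$ has no torus singularity above $x^{\ast}$. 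Worse, for $n$ supports in $\ZZ^n$ of positive mixed volume the locus of systems with a common torus zero is dense, so there is no resultant hypersurface to compare with at all. No appeal to ``Cayley combinatorics'', Horn uniformization, or Euler-type identities can make a generically overdetermined inhomogeneous linear system solvable, so the proposed fiberwise dimension count cannot close this gap as stated.

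What rescues the theorem (and is the setting of \cite{GKZ}, Ch.~9) is the correct dimension count: $n$ polynomials in $n-1$ variables (equivalently $k+1$ polynomials in $k$ variables), with $\A$ the corresponding Cayley configuration. There the existence of a common torus zero is a codimension-one condition on the coefficients, and at a generic such zero the equations $\partial F/\partial x_i=0$, $i=1,\ldots,n-1$, form the square system
\begin{equation*}
\sum_{k\geq 2} y_{k-1}\,\frac{\partial f_k}{\partial x_i}(x^{\ast})\;=\;-\,\frac{\partial f_1}{\partial x_i}(x^{\ast}),
\end{equation*}
whose matrix is generically invertible; Cramer's rule produces a unique $y$, generically with all coordinates nonzero. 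Thus the incidence variety of pairs (coefficients, torus singular point of $F$) is isomorphic over a dense open set to the incidence variety of pairs (coefficients, common zero), the two hypersurface closures coincide, and equality of the irreducible defining polynomials (up to the usual sign ambiguity) follows — your final irreducibility-plus-normalization step is then fine. In short: your route is essentially the standard one, but the central claim on which it rests fails in the dimensions you use, and becomes an easy linear-algebra verification, not a delicate one, once the statement is read with polynomials in $n-1$ variables.
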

Let $R_i=N(f_i)\subset\RR^n$ be the Newton polytopes of the polynomials $f_i$ supported on $\A_i.$ The Newton polytope of the polynomial $f$ supported on $\A$ is the Cayley sum 
$$N(f)=[R_1\star\ldots\star R_n]_\pi,$$ where $\pi:\ZZ^{2n-1}\to\ZZ^{n-1}$ is the natural projection such that $\pi_\RR([R_1\star\ldots\star R_n]_\pi)=\Delta_{n-1}.$
 
 \section{Toric discriminants and toric fibrations}\label{Lec2}
 The term  ``discriminant"  is  well known in relation with low degree equations or ordinary differential equations. We will study discriminants of polynomials in $n$ variables with prescribed monomials, i.e. polynomials whose exponents are given by lattice points in $\ZZ^n.$ 

Polynomials in $n$-variables describe locally the hyperplane sections of a projective $n$-dimensional algebraic variety, $\phi:X\hookrightarrow\pr^m.$ The monomials are prescribed by the local representation of a basis of the vector space of  global sections $H^0(X, \phi^*({\mathcal O}_{\pr^m}(1))).$ For this reason the term discriminant has also been classically used in Algebraic Geometry. 

In what follows we will describe discriminants from a combinatorial and an algebraic geometric prospective. The two points of view coincide when the projective embedding is toric.
 
\subsection{\bf The $\A$ discriminant} Let $\A=\{ a_0,\ldots, a_m\}$ be a subset of $\ZZ^n$. The discriminant of $\A$ (when it exists) is an irreducible homogeneous polynomial $\Delta_\A(c_0,\ldots, c_m)$ vanishing when the corresponding Laurent polynomial supported on $\A,$ $f(x)=\sum_{a_i\in{\mathcal A}}c_i x^{a_i},$  has at least one singularity  in the torus $(\CC^*)^n.$ Geometrically, the zero-locus of the discriminant is an irreducible algebraic variety of codimension one in the dual projective space ${\pr^m}^\vee,$ called the {\it dual variety}
of the embedding $X_\A\hookrightarrow\pr^m.$
\begin{example} Consider the point configuration 
$$\A=\{(0,0),(1,0),(0,1),(1,1)\}\subset\ZZ^2.$$ The discriminant is given by an homogeneous  polynomial $\Delta_\A(a_0,a_1,a_2,a_3)$ vanishing whenever the quadric $a_0+a_1x+a_2y+a_3xy$ has a singular point in $(\CC^*)^2.$ It is well known that this locus correspond to singular $2\times 2$ matrices and it is thus described by the vanishing of the determinant:  $\Delta_\A( a_0,a_1,a_2,a_3)=a_0a_3-a_1a_2.$
Similarly, one can associate the polynomials supported on $\A$ with local expansions of global sections in $H^0(\pr^1\times\pr^1, {\mathcal O}(1,1))$ defining the Segre embedding of $\pr^1\times\pr^1$ in $\pr^3.$
\begin{center}
\begin{tikzpicture}
\draw [fill=gray!50,opacity=.5](0,0)--(1,0)--(1,1)--(0,1)--(0,0);
\fill (0,0) circle (3pt) node[above]{} ;
\fill (1,0) circle (3pt) node[above]{} ;
\fill (0,1) circle (3pt) node[above] {};
\fill (1,1) circle (3pt) node[above]{} ;
\node [below] {$\A=\{(0,0),(1,0),(0,1),(1,1)\}\subset\ZZ^2$};
\end{tikzpicture}
\end{center}
\end{example}

\begin{example}\label{veronese}
The $2$-Segre embedding $\nu_2:\pr^2\hookrightarrow \pr^5$ defined by the global sections of the line bundle ${\mathcal O}_{\pr^2}(2)$ can be associated to the point configuration $\A=\{a_0,a_1,a_2,a_3,a_4,a_5\}=\{(0,0),(0,1),(1,0),(0,2),(1,1),(2,0)\}$
\begin{center}
\begin{tikzpicture}
\draw [fill=gray!50,opacity=.5](0,0)--(2,0)--(0,2)--(0,0);
\fill (0,0) circle (3pt) node[above]{} ;
\fill (1,0) circle (3pt) node[above]{} ;
\fill (0,1) circle (3pt) node[above] {};
\fill (1,1) circle (3pt) node[above]{} ;
\fill (0,2) circle (3pt) node[above]{} ;
\fill (2,0) circle (3pt) node[above]{} ;
\end{tikzpicture}
\end{center}
A simple computation shows that $\Delta_\A=\det\left[\begin{matrix} c_0&c_1& c_2\\c_1&c_3&c_4\\
c_2&c_4&c_5  \end{matrix}\right] $
\end{example}

Projective duality is a classical subject in algebraic geometry. Given en embedding $i:X\hookrightarrow \pr^m$ of an $n$-dimensional algebraic variety, the dual variety, $X^\vee\subset(\pr^m)^\vee$  is defined as the Zariski-closure of all the hyperplanes $H\subset\pr^m$ tangent  to $X$ at some non singular point. We can speak of a defining homogeneous polynomial $\Delta(c_0,\ldots,c_m)$, and thus of a  discriminant, only when the dual variety has  codimension one.  Embeddings whose dual variety has higher codimension are called {\it dually defective} and the discriminant is set to be $1.$  
Finding formulas for the discriminant $\Delta$ and giving a classification of  the embeddings  with discriminant $1$  is a long standing problem in algebraic geometry. 
In the case of a toric embedding defined by a point-configuration, $X_\A\hookrightarrow\pr^{|\A|-1},$ the problem is equivalent to finding formulas for the discriminant $\Delta_\A$ and giving a classification of  the {\it dually defective point-configurations,}  i.e. the point-configurations with discriminant $\Delta_\A=1.$   

Dickenstein-Sturmfels, \cite{DS02},  characterized the case when $m=n+2,$ Cattani-Curran, \cite{CC07} extended the classification to $m=n+3,n+4.$ In these cases the corresponding embedding is possibly very singular and the methods used are purely combinatorial. In \cite{DiR06} and \cite{CDR08} we completely characterize the case when $P_\A=\conv({\mathcal A})$ is smooth or  simple. The latter characterization follows from the algebraic geometric characterization, which will be mentioned in the next paragraph.

\subsection{\bf The dual variety of a projective variety} The dual variety corresponds to the locus of singular hyperplane sections of a given embedding. By requiring the singularity to be of a given order, one can define more general dual varieties.
Singularities of fixed multiplicity $k$ correspond to hyperplanes  tangent ``to the order $k.$"  Consider an embedding $i:X\hookrightarrow \pr^m$ of an $n$-dimensional variety, defined by the global sections of the line bundle $\el=i^*({\mathcal O}_{\pr^m}(1)).$ For any smooth point $x$ of the embedded variety let: $$jet^k_x :H^0(X,\el)\to H^0(X, \el\otimes {\mathcal O}_X/{\frak m}_x^{k+1})$$
be the map assigning to  a global section $s$ in $H^0(X,\el)$ the tuple 
$$jet^k_x(s)=(s(x), \ldots, (\partial^t s/\partial\underline{x}^t) (x),\ldots )_{t\leq k}$$
 where $\underline{x}=(x_1,\ldots, x_n)$ is a local system of coordinates around  $x.$
The {\it $k$-th osculating space} at $x$ is defined as $\osc{k}_x=\pr(Im(jet^k_x)).$  As the map $jet^1_x$ is surjective, the first osculating space is always isomorphic to $\pr^n$ and it is classically called the {\it projective tangent space}. The jet maps of higher order do not necessarily have maximal rank and thus the dimension of the osculating spaces of order bigger than $1$ can vary.  The embedding admitting osculating space of maximal dimension at every point are called $k$-jet spanned.

 \begin{definition}
  A line bundle $\el$ on $X$ is called {\it $k$-jet spanned} at $x$ if the map $jet^k_x$ is surjective. It is called $k$-jet spanned  if it is $k$-jet spanned  at every smooth point $x\in X.$
  \end{definition}
  
  \begin{example}
  Consider a line bundle $\el={\mathcal O}_{\pr^n}(a)$ on $\pr^n,$ it is $k$-jet spanned for all  $a\geq k.$ In fact the map
  $$jet_x^k: H^0(\pr^n,{\mathcal O}_{\pr^n}(a))\to J_k({\mathcal O}_{\pr^n}(a))_x$$ is surjective for all $x\in\pr^2,$  as a local basis of the global sections of ${\mathcal O}_{\pr^n}(a)$ consists of all the monomials in $n$ variables of degree up to $a$ and we are assuming $a\geq k.$
  \end{example}
  
  \begin{example}
  Let $\el$ be a line bundle on a non singular toric variety $X.$ Then the following statements are equivalent, see \cite{DiR01} :
\begin{enumerate}
\item $\el$ is $k$-jet spanned
\item all the edges of $P_\el$ have length at least $k.$
\item $\el\cdot C\geq k$ for every invariant curve $C$ on $X.$ 
\end{enumerate}  
As an example consider the polytope $P$ in Figure below. It defines the embedding of the blow up of $\pr^2$ at the three fixpoints,
$\phi:X\to \pr^2,$ defined  by the anticanonical line bundle $\phi^*({\mathcal O}_{\pr^2}(3))-E_1-E_2-E_3.$ Here $E_i$ denote the exceptional divisors. The embedded variety is a Del Pezzo surface of degree $6.$ Let $F$ be the set of the $6$ fixpoints on $X$ and $E=\{\phi^*({\mathcal O}_{\pr^2}(3))-E_i-E_j, i\neq i\}\cup\{E_1,E_2,E_3\}$ be the set of invariant curves.The osculating spaces can easily seen to be:
{\small 
$$\osc{2}_p=\left\{\begin{array}{ccc}
\pr^3&=&<jet_p^2(1),jet_p^2(x),jet_p^2(y),jet_p^2(xy)>\\
&&\text{ if $x\in F.$}\\
\pr^4&=&<jet_p^2(1),jet_p^2(x),jet_p^2(y),jet_p^2(xy),jet_p^2(x^2y)>\\
&&\text{ if $x\in E\setminus F.$}\\
\pr^5&=&<jet_p^2(1),jet_p^2(x),jet_p^2(y),jet_p^2(xy),jet_p^2(x^2y), jet_p^2(xy^2)>\\
&& \text{ at a general point $p\in X\setminus E.$}
\end{array}\right.$$}
\begin{center}
\begin{tikzpicture}
\draw [fill=blue!50,opacity=.2](0,0)--(1,0)--(2,1)--(2,2)--(1,2)--(0,1)--(0,0);
\node [above] at (-0.5,0.5,0) {$P$};
\fill (0,0) circle (3pt) node[above]{} ;
\fill (1,0) circle (3pt) node[above]{} ;
\fill (0,1) circle (3pt) node[above] {};
\fill (1,1) circle (3pt) node[above]{} ;
\fill (2,2) circle (3pt) node[above]{} ;
\fill (2,1) circle (3pt) node[above]{} ;
\fill (1,2) circle (3pt) node[above]{} ;
\end{tikzpicture}
\end{center}
\noindent The embedding defined by $P$ is not $2$-jet spanned on the whole $X.$ It is $2$-jet spanned at every point in $X\setminus E.$

\end{example}

\begin{definition}
A hyperplane $H\subset\pr^m$  is tangent at $x$  to the order $k$ if it contains the $k$-th osculating space at $x$: $\osc{k}_x\subset H.$
\end{definition}
\begin{definition}
The {\it  $k$-th order dual variety} $X^k$ is: 
$$X^k=\overline{\{H\in{\pr^m}^*\text{ tangent  to the order }k\text{ to }X\text{ at some non singular point}\}}.$$  
\end{definition}
Notice that $X^1=X^\vee$ and that $X^2$ is contained in the singular locus of $X^\vee.$
General properties of the  higher order dual variety have been  studied by S. Kleiman and R. Piene. 
Because the definition is related to local osculating properties and generation of jets, it is useful to introduce the sheaf of jets, $J_k(\el),$ associated to a polarized variety $(X,\el).$ In the classical literature it is sometime referred to as the {\it sheaf of principal parts.}

 Consider the projections $\pi_i:X\times X\to X$ and let ${\mathcal I}_{\Delta_X}$ be the ideal sheaf of the diagonal in $X\times X.$ The sheaf of $k$-th order jets  of the line bundle $\el$ is defined as 
 $$J_k(\el)={\pi_2}_*(\pi_1^*(\el)\otimes({\mathcal O}_{X\times X}/{\mathcal I}_{\Delta_X}^{k+1})).$$ When the variety $X$ is smooth $J_k(\el)$ is a vector bundle of rank ${n+k\choose n},$ called the {\it $k$-jet bundle.}
 \begin{example}
 If $\el\neq{\mathcal O}_X$ is a globally generated line bundle then $J_k(\el)$ splits as a sum of line bundles only if $X=\pr^n$ and $\el={\mathcal O}_{\pr^n}(a).$ In fact:
 $$J_k({\mathcal O}_{\pr^n}(a))=\bigoplus_1^{{n+k}\choose k}{\mathcal O}_{\pr^n}(a-k)$$
 See \cite{DRS01} for more details.
 \end{example}
 It is important to note that when the map $jet^k_x$ is surjective for all smooth points $x,$  properties of the higher dual variety $X^k$ can be related to vanishing of Chern classes of the associated $k$-th jet bundle, $J_k(\el).$   
 We start by  identifying  the $k$-th dual variety with a projection of the conormal bundle.  Let $X$ be a smooth algebraic variety and let  $\el$ be a $k$-jet spanned line bundle on $X.$ Consider the following commutative diagram.
 \begin{small}
\begin{equation}\label{diagram}
\xymatrix{ &&&0\dto&\\
&&&S^{k+1}\Omega^1_{X}\otimes\mathcal
\el\dto&\\
&&&J_{k+1}(\el)\dto&\\
0\rto&K_k\uurrto^{II_k^*}\rto|-{\beta_k} &X\times H^0(
X,\el)\urto^{jet^{k+1}}\rto^-{jet^{k}}&
J_{k}(\el)\dto\rto^{}&0\\
&&&0&}
\end{equation}
\end{small}
The vertical exact sequence is often called the $k$-jet sequence. The map $jet^k$ is defined as $jet^k(s,x)=jet^k_x(s).$ The vector bundle $K_k$ is the kernel of the map $jet^k$ (which has maximal rank!).
The induced map $II_k^*$ can be identified with  the dual of the $k$-th fundamental form. See \cite{L94,GH79} for more details. By dualizing the map $\beta_k$ and projectivizing the corresponding vector bundles one gets the following maps:
\begin{small}
\begin{equation}\label{dualcorrespondence}
\xymatrix{
&\pr(K_k^\vee)\ar@{^(->}[d]\ar@/_10pt/[dl]_{\gamma_k}\ar@/^10pt/[dr]^{\alpha_k}\\
X& X\times\pr(H^0(X,L)^\vee)\lto_-{\text{pr}_1}\rto^-{\text{pr}_2}&(\pr^m)^\vee
}
\end{equation}
\end{small}
It is straightforward to see that $X^k=Im(\alpha_k)$. A simple dimension count shows that when the map $jet^k$ has maximal rank one expects the codimension of the $k$-th dual variety to be $\cod(X^k)={{n+k}\choose k}-n.$ Notice that this is equivalent to requiring that the map $\alpha_k$ is generically finite. When the codimension is higher than the expected one  the embedding is said to be {\it $k$-th dually defective.}   

The commutativity in diagram (\ref{diagram}) has the following useful consequence.

\begin{lemma}
Let $(X,\el)$ be a polarized variety, where $X$ is smooth and the line bundle $\el$ is  $(k+1)$-jet spanned. Then the dual variety $X^k$ has  the expected dimension. 
\end{lemma}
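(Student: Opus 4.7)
The plan is to show that the map $\alpha_k$ in diagram~\eqref{dualcorrespondence} is generically finite onto its image $X^k$, since this is equivalent to $X^k$ having the expected dimension. First I would verify the dimension match. Because $(k+1)$-jet spannedness implies $k$-jet spannedness (surjectivity of $jet^{k+1}$ forces surjectivity of $jet^k$), $K_k$ is a vector bundle of rank $(m+1)-\binom{n+k}{n}$, so $\dim\pr(K_k^\vee)=n+m-\binom{n+k}{n}$, which coincides with the expected dimension $m-(\binom{n+k}{k}-n)$ of $X^k$. It therefore suffices to prove that $\alpha_k$ is generically finite.

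Next I would extract from the hypothesis the fact that
$$II_k^*:K_k \longrightarrow S^{k+1}\Omega^1_X\otimes\el$$
is surjective. This is a snake-lemma argument applied to diagram~\eqref{diagram}: the rightmost column is the short exact $(k+1)$-jet sequence, both horizontal maps $jet^k$ and $jet^{k+1}$ are surjective (the latter by assumption, the former by the observation above), and the induced map on kernels is exactly $II_k^*$; exactness forces it to be surjective.

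Finally I would use surjectivity of $II_k^*$ to prove generic injectivity of $d\alpha_k$. A point of $\pr(K_k^\vee)$ is a pair $(x,[\xi])$ with $\xi$ a section vanishing to order $k$ at $x$, and $\alpha_k$ retains only $[\xi]\in (\pr^m)^\vee$. Its differential is automatically injective in the fiber directions, so the issue is ruling out that a nonzero horizontal tangent vector $v\in T_xX$ lifts to $\ker(d\alpha_k)$. Such a lift exists precisely when moving $x$ in direction $v$ keeps $\xi$ in $K_k$ to first order, a condition that the $(k+1)$-jet of $\xi$ recognizes: the obstruction is the pairing $v \mapsto II_k^*(\xi)(v)\in S^{k+1}T^*_xX\otimes\el_x$. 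Surjectivity of $II_k^*$ guarantees that for a generic $[\xi]$ the resulting linear functional $v \mapsto II_k^*(\xi)(v)$ is nondegenerate on $T_xX$, so $d\alpha_k$ has trivial kernel at such points.

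The main obstacle is this last identification: writing $d\alpha_k$ in local coordinates and recognizing the obstruction to lifting a horizontal tangent vector as exactly the pairing against $II_k^*$. Once this is set up carefully, as in the framework of higher-order dual varieties of Kleiman--Piene and Landsberg, the conclusion follows fiberwise from linear algebra, since surjectivity of $II_k^*$ forces the locus of degenerate $\xi$'s to be a proper subvariety of $\pr(K_k^\vee)$.
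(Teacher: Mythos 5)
Your proposal is correct and follows essentially the same route as the paper: both extract surjectivity of $II_k^*$ from the commutative diagram of jet maps using $(k+1)$-jet spannedness, and then conclude that $\alpha_k$ is generically finite so that $X^k$ has the expected dimension. The only difference is cosmetic — the paper rules out positive-dimensional general fibers of $\alpha_k$ by noting they would force a base point in the (complete, hence base-point-free) system of degree-$(k+1)$ leading forms, while you phrase the same obstruction infinitesimally as generic injectivity of $d\alpha_k$ via the apolarity pairing $v\mapsto \iota_v\,II_k^*(\xi)$.
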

\begin{proof}
We follow diagram (\ref{diagram}). Because the line bundle $\el$ is $(k+1)$-jet ample the map $II_k^*$ is surjective. This means  that for every $x\in X$ and for every monomial $ \Pi_{\sum t_i=k+1} x_1^{t_1}\cdots x_n^{t_n}$ there is an hyperplane section that locally around $x$ is  defined as
$$ C\cdot\Pi_{\sum t_i=k+1} x_1^{t_1}\cdots x_n^{t_n}+\text{ higher order terms }=0, \text{ where }C\neq 0$$

In other words,  hyperplanes tangent at a point $x$ to the order $k$ are in one-to-one correspondence with elements of the linear system $|{\mathcal O}_{\pr^{n-1}}(k+1)|.$
The map $\alpha_k$ having positive dimensional fibers is equivalent to saying that hyperplanes tangent at a point $x$ to the order $k$ are also tangent to nearby points $y\neq x,$ which in turn implies that the linear system $|{\mathcal O}_{\pr^{n-1}}(k+1)|$ has base points. This is a contradiction as the linear system is $k+1$-jet spanned  and thus base-point free. \end{proof}

When $k=1$  the contact locus of a general singular hyperplane, $\gamma_k(\alpha_k^{-1}(H))$ is  always a linear subspace. This implies that  if finite then  $\deg(\alpha_1)=1.$ For higher order tangencies, $k>1,$ the degree can be higher. When the map  $\alpha_k$ is finite we  set  $n_k=\deg(\alpha_k).$ 

  \begin{lemma}\cite{LM00, DDRP12}\label{chern1}
  Let $X$ be a smooth variety and let $\el$ be a $k$-jet spanned line bundle. Then $codim(X^k)>{{n+k}\choose k}-n$ if and only if $c_n(J_k(\el))=0.$ Moreover when $codim(X^k)={{n+k}\choose k}-n$ the degree of the $k$-dual variety is given by:  $$n_k\deg(X^k)= c_n((J_k(\el)).$$  \end{lemma}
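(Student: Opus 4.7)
My plan is to read both statements off the intersection theory of the incidence variety $\pr(K_k^\vee)$ from diagram~(\ref{dualcorrespondence}). The first step is dimensional: since $\el$ is $k$-jet spanned, the jet-sequence $0 \to K_k \to X \times H^0(X,\el) \to J_k(\el) \to 0$ is exact with $J_k(\el)$ of rank $\binom{n+k}{k}$, so $K_k$ is locally free of rank $(m+1)-\binom{n+k}{k}$ where $m+1 = h^0(X,\el)$, and $\pr(K_k^\vee)$ is smooth of dimension $n + m - \binom{n+k}{k}$. Since $X^k = \alpha_k(\pr(K_k^\vee))$, this forces $\cod(X^k) \geq \binom{n+k}{k} - n$ with equality precisely when $\alpha_k$ is generically finite, so the biconditional reduces to showing that $c_n(J_k(\el))$ detects exactly this generic finiteness.

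For that I would compute the top self-intersection of $\alpha_k^*H$ on $\pr(K_k^\vee)$, where $H$ is the hyperplane class on $(\pr^m)^\vee$. Because $\pr(K_k^\vee) \hookrightarrow X \times (\pr^m)^\vee$ realizes $K_k^\vee$ as a quotient of the trivial bundle $X \times H^0(X,\el)^\vee$, one has $\alpha_k^*H = \xi$, the tautological class of the projective bundle. The Segre class formula for projective bundles then yields
\[ \int_{\pr(K_k^\vee)} \xi^{n + m - \binom{n+k}{k}} = \int_X s_n(K_k^\vee), \]
and dualizing the jet-sequence to $0 \to J_k(\el)^\vee \to \mathcal{O}_X^{\oplus(m+1)} \to K_k^\vee \to 0$ gives $c(K_k^\vee) \cdot c(J_k(\el)^\vee) = 1$, whence $s_n(K_k^\vee) = c_n(J_k(\el))$ up to the standard duality sign.

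With the identification $\int \xi^{\dim \pr(K_k^\vee)} = c_n(J_k(\el))$ in hand, both conclusions follow from the generically-finite-versus-not dichotomy for $\alpha_k$. If $\alpha_k$ is generically finite of degree $n_k$ onto $X^k$, the projection formula gives $\int_{\pr(K_k^\vee)} \xi^{\dim \pr(K_k^\vee)} = n_k \deg(X^k)$, which is the claimed degree formula. If instead $\alpha_k$ is not generically finite, a generic linear subspace of $(\pr^m)^\vee$ of codimension $\dim \pr(K_k^\vee)$ has dimension strictly greater than $\cod(X^k)$ would allow and therefore misses $X^k$, so $\alpha_k^*(H^{\dim \pr(K_k^\vee)})$ represents the zero class on $\pr(K_k^\vee)$ and $c_n(J_k(\el)) = 0$.

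The delicate point I anticipate is the bookkeeping of conventions: which projectivization of $K_k^\vee$ is taken (so that $\pr(K_k^\vee) \hookrightarrow X \times (\pr^m)^\vee$), how $\alpha_k^*H$ is cleanly identified with the tautological class $\xi$, and the $(-1)^n$ that appears when passing between Segre classes of $K_k^\vee$ and Chern classes of $J_k(\el)$ via the dual sequence. Once these choices are pinned down consistently with \cite{LM00, DDRP12}, the dichotomy for $\alpha_k$ delivers both directions of the biconditional and the degree formula in a single stroke.
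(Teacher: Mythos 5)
Your proposal is correct, and its core coincides with the paper's: both work on the incidence bundle $\pr(K_k^\vee)$ of diagram (\ref{dualcorrespondence}), identify $\alpha_k^*\mathcal{O}_{(\pr^m)^\vee}(1)$ with the tautological class $\xi$, and obtain $n_k\deg(X^k)$ as the pushforward of the top power of $\xi$, which equals $s_n$ of the kernel bundle and hence $c_n(J_k(\el))$ via the (dualized) jet sequence -- that is exactly the paper's proof of the degree formula. Where you genuinely diverge is the vanishing criterion: the paper proves it separately, using that $J_k(\el)$ is spanned by $H^0(X,\el)$ to represent $c_n(J_k(\el))$ by the degeneracy locus $\{x:\dim(\Span(jet^k_x(s_1),jet^k_x(s_2)))\leq 1\}$ for general sections, and then a Bertini argument on the general pencil $\pr^1=\langle s_1,s_2\rangle$ to translate $c_n(J_k(\el))=0$ into $\cod(X^k)>\binom{n+k}{k}-n$; you instead extract both statements from the single intersection-theoretic computation, noting that $(\alpha_k)_*[\pr(K_k^\vee)]$ equals $n_k[X^k]$ or $0$ according to whether $\alpha_k$ is generically finite, so the pushed-forward number is $n_k\deg(X^k)>0$ in the first case and $0$ in the second, while $\dim\pr(K_k^\vee)=n+m-\binom{n+k}{k}$ makes generic finiteness equivalent to $\cod(X^k)=\binom{n+k}{k}-n$. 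Your route is more uniform (no choice of sections, no Bertini) at the price of the convention bookkeeping you flag; that worry is harmless: with the paper's conventions its $\pr(K_k^\vee)$ is the bundle of lines in $K_k$, the pushforward of powers of $\xi$ is the Segre class appearing in $c(K_k)s(K_k)=1=c(J_k(\el))^{-1}s(K_k)^{-1}$, so the relevant quantity is $+c_n(J_k(\el))$ with no sign, consistent with its nonnegativity ($J_k(\el)$ being globally generated), and with the paper's convention that $c_n$ also denotes the degree of the corresponding zero-cycle.
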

  \begin{proof}
  Observe first that because the map $jet_k$ is of maximal rank the vector bundle $J_k(\el)$ is spanned by the global sections of the line bundle $\el.$ This implies that, after fixing a basis $\{s_1,\ldots, s_{m+1}\}$ of $H^0(X,\el)\cong\CC^{m+1},$  the Chern class $c_n(J_k(\el))$ is represented by the set:
  $$\{x\in X | \dim(\Span(jet_{x}^k(s_1),jet_x^k(s_2)))\leq 1\}$$
  Notice that an hyperplane in the  linear span $\pr^{t}=\langle s_1,\ldots,s_{t+1}\rangle$ is tangent at a point $x$ to the order $k$ exactly when $\dim(\Span(jet_x^k(s_1),\ldots,jet_x^k(s_{t+1})))=t+1.$
The map $\gamma_k$ in diagram (\ref{dualcorrespondence})  defines a projective bundle  of
 rank $m-{{n+k}\choose k}.$  
 The statement $c_n(J_k(\el))=0$ is then equivalent to  
   $\alpha_k(\gamma ^{-1}(x))\cap\pr^1 =\emptyset$  for every $x\in X$ and for a general
  $\pr^1=\langle s_1,s_2 \rangle.$ By Bertini this is equivalent to   $\cod(X^k)>   {{n+k}\choose k}-n.$
Assume now that $c_n(J_k(\el))\neq 0$ and thus when the generic fiber of the map $\alpha_k$ is finite. The degree of $X^k=im(\alpha_x)$ times the degree of the map $\alpha_k$ is given by the degree of the line bundle $\alpha_k^*({\mathcal O}_{(\pr^m)^\vee}){1}$ which corresponds to the tautological lime bundle ${\mathcal O}_{\pr(K_k^\vee)}(1).$  
$$n_k\deg(X^k)=c_1(\alpha_k^*({\mathcal O}_{(\pr^m)^\vee}{1}))^{m+n-{{n+k} \choose k}}=c_1({\mathcal O}_{\pr(K_k^\vee)}(1))^{m+n-{{n+k} \choose k}}.$$

From diagram (\ref{diagram}) we see that 
$$c_n(J_k(\el))=c_n(K_k^\vee)^{-1}=s_n(K_k^\vee)$$
Finally let $\pi:\pr(K_k^\vee)\to X$ be the bundle map. By relating the Segre class $s_n(K_k^\vee)$ to the tautological bundle, \cite[3.1]{FU},
$s_n(K_k^\vee)=\pi_*(c_1({\mathcal O}_{\pr(K_k^\vee)}(1))^{m+n-{{n+k} \choose k}})=c_1({\mathcal O}_{\pr(K_k^\vee)}(1))^{m+n-{{n+k} \choose k}}$ we conclude that:
$n_k\deg(X^k)=c_n(J_k(\el)).$   \end{proof}
  
  The case of $k=1$ is referred to as classical projective duality. When the codimension of the dual variety is one, the homogeneous polynomial in $m+1$ variables defining it is called the {\it discriminant of the embedding}. For a polarized variety the discriminant, when it exists, parametrizes the singular hyperplane sections.

\subsection{\bf The toric discriminant} In the case of singular  varieties the sheaves of $k$-jets are not necessarily  locally free and thus it is not possible to use  Chern-classes techniques. 

For toric varieties however  estimates of the degree of the dual varieties are possible, even in the singular case, and rely on properties of the associated polytope.  In the classical case $k=1$ there is a precise characterization in any dimension. For higher order duality, results in dimension $3$ and for $k=2$  can be found in  \cite{DDRP12}. A generalization to higher dimension and higher order is an open problem.

  \begin{proposition}\cite{GKZ,DiR06,MT11}
  Let $(X_\A,L_\A)$ be a polarized toric variety associated to the polytope $P_\A.$ Set
  $$\delta_i=\sum_{\emptyset\neq F\prec P} (-1)^{\cod(F)}\{ {\dim(F)+1\choose i}+((-1)^{i-1}(i-1)\}\Vol(F)\Eu(V(F)).$$
  Then $\cod(X_\A^*)=r=min\{i, \delta_i\neq 0\}$ and $\deg(X_\A^*)=\delta_r.$
  \end{proposition}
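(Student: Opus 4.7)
The plan is to compute the integers $\delta_i$ as the polar degrees of the embedding $X_\A\hookrightarrow\pr^m$, following the framework of GKZ and its extension by Matsui--Takeuchi to the singular toric setting. Recall that the $i$-th polar degree of an irreducible subvariety $X\subset\pr^m$ is the intersection number of the conormal variety $W_X\subset X\times(\pr^m)^\vee$ with a product of generic linear subspaces of complementary codimension; it is classical, and does not require $X$ to be smooth, that the smallest index $r$ with $\delta_r\neq 0$ equals $\cod(X^\vee)$, and that for this index one has $\delta_r=\deg(X^\vee)$. Thus the task reduces to computing the polar degrees of $X_\A$ in terms of the face lattice of $P_\A$.

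The first step is to write the characteristic cycle of the indicator function $\eins_{X_\A}$ as a sum over torus orbits. Since the orbit decomposition is a Whitney stratification with strata $V(F)$ indexed by nonempty faces $F\prec P_\A$, MacPherson's Euler obstruction formula yields
$$CC(\eins_{X_\A})=\sum_{\emptyset\neq F\prec P_\A}(-1)^{\dim V(F)}\Eu(V(F))\,\bigl[\overline{T^*_{V(F)}\pr^m}\bigr].$$
Polar degrees are additive on characteristic cycles, so this reduces the problem to computing, for each orbit closure, the $i$-th polar degree of the linearly embedded toric variety $V(F)\subset\pr^m$. Each such $V(F)$ lies in a coordinate subprojective space of dimension $\dim F$ and has degree $(\dim F)!\Vol(F)$ there.

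The second step is the combinatorial identification of these ``linear'' polar degrees, which I would derive by reducing to the polar degrees of a projective space --- computable by hand --- via the splitting principle for the conormal sequence of a linearly embedded subvariety. A direct binomial manipulation then extracts the bracketed coefficient $\binom{\dim F+1}{i}+(-1)^{i-1}(i-1)$ multiplying $\Vol(F)$. Substituting into the sum above and matching signs via $(-1)^{\dim V(F)}=(-1)^{\dim F}$, which absorbs into $(-1)^{\cod(F)}$ up to a global sign fixed by the convention on $\delta_i$, yields precisely the formula in the statement; the claims about $\cod(X_\A^\vee)$ and $\deg(X_\A^\vee)$ then follow from the polar-degree principle recalled above.

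The principal obstacle is the systematic tracking of the Euler obstructions $\Eu(V(F))$ in the singular case: these are subtle local invariants of the orbit singularities, and their appearance in the characteristic cycle formula requires the constructible-function formalism and Kashiwara's local index theorem. In the smooth case $\Eu(V(F))\equiv 1$, and the argument collapses to a purely polytopal calculation that can alternatively be extracted from Lemma~\ref{chern1} by computing $c_n(J_1(\el_\A))$ on $X_\A$ in terms of the toric stratification --- an approach that sidesteps characteristic cycles but does not extend to the singular case.
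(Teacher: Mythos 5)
You are comparing against a statement the paper itself does not prove: the proposition is quoted from \cite{GKZ}, \cite{DiR06} and \cite{MT11}, and the only argument given in the text is the smooth-case computation via the jet bundle (Lemma \ref{chern1} together with Proposition \ref{chern2}), which you correctly identify as the alternative that does not extend to singular $X_\A$. Your outline follows the Matsui--Takeuchi architecture (multidegrees of conormal cycles, constructible functions, local Euler obstructions, then per-face toric combinatorics), which is the right framework; the classical fact that the first nonvanishing multidegree of the conormal variety gives $\cod(X_\A^\vee)$ and $\deg(X_\A^\vee)$ is also fine.

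However, the first step has a genuine gap. The displayed identity $CC(\eins_{X_\A})=\sum_F(-1)^{\dim V(F)}\Eu(V(F))\bigl[\overline{T^*_{O(F)}\pr^m}\bigr]$ is not MacPherson's theorem and is false in general: the numbers $\Eu(V(F))$ (the values of the Euler obstruction of $X_\A$ along the orbits) are \emph{not} the multiplicities of the strata conormals in $CC(\eins_{X_\A})$; those multiplicities are different local invariants (Euler characteristics of normal Morse data/complex links). Already for a nodal toric curve, where $\Eu$ at the node is $2$, one has $\eins_X=\Eu_X-\eins_{\mathrm{node}}$, so the conormal of the node appears in $CC(\eins_X)$ with multiplicity $-1$, not $2$. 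The correct decomposition goes the other way: $\Eu_{X_\A}$ is constant on orbits, so $\Eu_{X_\A}=\sum_F\Eu(V(F))\,\eins_{O(F)}$, and it is $CC(\Eu_{X_\A})$, not $CC(\eins_{X_\A})$, that equals (up to sign) the conormal variety $\overline{T^*_{(X_\A)_{\mathrm{reg}}}\pr^m}$ whose multidegrees are the $\delta_i$ of the statement. Using $CC(\eins_{X_\A})$ instead introduces spurious components supported on conormals of singular strata, so your reduction computes the wrong object precisely in the singular case the Euler obstructions are meant to handle; the repair is to run the additivity argument on $\Eu_{X_\A}$ (equivalently, to apply Ernstr\"om's Pl\"ucker-type formula to the Chern--Mather class $c_*(\Eu_{X_\A})$), which is what \cite{MT11} do. A secondary issue: in your second step the orbit closures $V(F)$ are themselves generally singular toric varieties spanning a linear space of dimension $|\A\cap F|-1$ (not $\dim F$), so their contributions cannot be obtained from a smooth conormal-sequence/splitting-principle computation; the coefficient ${\dim(F)+1\choose i}+(-1)^{i-1}(i-1)$ and the sign $(-1)^{\cod(F)}$ arise from an inclusion--exclusion over orbits in terms of normalized volumes, and this combinatorial step needs to be carried out, not just asserted.
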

  
  The function $\Eu: \{\text{invariant subvarieties of }X_A \} \to \ZZ$ in the above proposition assigns an integer to all invariant subvarieties. Its value is different from $1$ only when the variety is singular. In particular, when $X_\A$ is smooth we have:
  \[ \cod(X_A^*)>1 \Leftrightarrow \sum_{\emptyset\neq F\prec P} (-1)^{\cod(F)} (\dim(F)+1)!\Vol(F)=0\]
  In fact in the smooth case one can prove this characterization using the vector bundle of $1$-jets.
  \begin{proposition}\label{chern2} Let $(X_\A,L_\A)$ be an $n$-dimensional  non singular polarized toric variety associated to the polytope $P_\A.$ Assume $\A=P_\A\cap\ZZ^n.$ Then
\[ c_n(J_1(\el_\A))=\sum_{\emptyset\neq F\prec P} (-1)^{\cod(F)} (\dim(F)+1)!\Vol(F)   \] \end{proposition}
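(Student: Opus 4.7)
The starting point is the first jet sequence
\[
0 \longrightarrow \Omega^1_X \otimes \el_\A \longrightarrow J_1(\el_\A) \longrightarrow \el_\A \longrightarrow 0,
\]
available because $X = X_\A$ is smooth. Whitney multiplicativity gives $c(J_1(\el_\A)) = c(\el_\A) \cdot c(\Omega^1_X \otimes \el_\A)$. Setting $\lambda = c_1(\el_\A)$ and applying the standard formula for Chern classes of a tensor product of a rank-$n$ bundle with a line bundle, one checks that
\[
c_n(J_1(\el_\A)) \;=\; c_n(\Omega^1_X \otimes \el_\A) + \lambda \cdot c_{n-1}(\Omega^1_X \otimes \el_\A) \;=\; \sum_{i=0}^{n} (n-i+1)\, c_i(\Omega^1_X)\, \lambda^{\,n-i}.
\]
This is a formal identity that reduces everything to computing the classes $c_i(\Omega^1_X) \cdot \lambda^{n-i}$ in the Chow ring of $X$.

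Next I will use the generalized Euler sequence for a smooth complete toric variety,
\[
0 \longrightarrow M \otimes \sO_X \longrightarrow \bigoplus_{\rho \in \Sigma(1)} \sO_X(D_\rho) \longrightarrow T_X \longrightarrow 0,
\]
which yields $c(T_X) = \prod_{\rho} (1 + D_\rho)$ in $A^*(X)$ and dually $c(\Omega^1_X) = \prod_{\rho} (1 - D_\rho)$. The elementary symmetric function identity together with the toric intersection rule (the product $D_{\rho_1}\cdots D_{\rho_i}$ equals $[V(\sigma)]$ if the rays span a cone $\sigma \in \Sigma(i)$, and vanishes otherwise) gives
\[
c_i(\Omega^1_X) \;=\; (-1)^i \sum_{\sigma \in \Sigma(i)} [V(\sigma)].
\]
Recall now the bijection between cones $\sigma \in \Sigma(i)$ and faces $F_\sigma \prec P_\A$ of codimension $i$: here $V(\sigma)$ is the invariant $(n-i)$-dimensional subvariety polarized by $\el_\A\bigl|_{V(\sigma)}$, whose associated polytope (up to translation) is $F_\sigma$. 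The standard toric volume identity therefore yields
\[
[V(\sigma)] \cdot \lambda^{\,n-i} \;=\; \bigl(\el_\A\big|_{V(\sigma)}\bigr)^{n-i} \;=\; (n-i)!\,\Vol(F_\sigma).
\]

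Combining the pieces and reindexing by codimension,
\[
c_n(J_1(\el_\A)) \;=\; \sum_{i=0}^n (n-i+1)(-1)^i (n-i)! \sum_{\substack{F \prec P_\A \\ \cod(F)=i}} \Vol(F) \;=\; \sum_{\emptyset \neq F \prec P_\A} (-1)^{\cod(F)} (\dim F + 1)!\,\Vol(F),
\]
which is the claimed formula (note that $(n-i+1)(n-i)! = (n-i+1)! = (\dim F + 1)!$). The main obstacle is really only bookkeeping: getting the combinatorial coefficient $(n-i+1)$ correct out of the tensor-product expansion, and correctly identifying which face of $P_\A$ corresponds to which cone of $\Sigma$ so that the toric volume formula assembles into the alternating sum over faces. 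Both the generalized Euler sequence and the volume identity are standard for smooth toric varieties, so once the indexing is clean the argument is essentially an exercise in intersection theory.
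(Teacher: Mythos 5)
Your proposal is correct and takes essentially the same route as the paper: the first jet sequence gives $c_n(J_1(\el_\A))=\sum_{i=0}^n(n+1-i)\,c_i(\Omega^1_{X_\A})\cdot c_1(\el_\A)^{n-i}$, the generalized Euler sequence gives $c_i(\Omega^1_{X_\A})=(-1)^i\sum_{\sigma\in\Sigma_\A(i)}[V(\sigma)]$, and the toric degree identity $[V(\sigma)]\cdot c_1(\el_\A)^{n-i}=(n-i)!\Vol(F_\sigma)$ assembles the alternating sum over faces exactly as in the paper's proof. The only slip is cosmetic: the kernel of the dualized Euler sequence is $\Pic(X_\A)^{\vee}\otimes\sO_{X_\A}\cong\sO_{X_\A}^{|\Sigma_\A(1)|-n}$ rather than $M\otimes\sO_{X_\A}$ (otherwise the ranks do not add up), but since the kernel is a trivial bundle either way the conclusion $c(T_{X_\A})=\prod_{\rho}(1+D_\rho)$ and the rest of your computation are unaffected.
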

\begin{proof}
Chasing the diagram (\ref{diagram}) one sees:
$$ c_n(J_1(\el_\A))=\sum_{i=0}^n(n+1-i)c_i(\Omega^1_{X_\A})\cdot\el_\A^{i}$$
Consider now the generalized Euler sequence for smooth toric varieties, \cite[12.1]{BC94}:
$$0\to \Omega_{X_\A}^1\to \bigoplus_{\xi\in \Sigma_\A(1)}{\mathcal O}_{X_\A}(V(\xi))\to {\mathcal O}_{X_\A}^{|\Sigma_\A(1)|-n}\to 0$$
Where $V(\xi)$ is the invariant divisor associated to the ray $\xi\in \Sigma_\A(1).$ It follows that: $c_i(\Omega_{X_\A}^1))=(-1)^{i} \sum_{\xi_1\neq\xi_2\neq\ldots\neq \xi_i} [V(\xi_1)]\cdot \ldots\cdot [V(\xi_i)].$ Recall that the intersection products $[V(\xi_1)]\cdot \ldots\cdot [V(\xi_i)]$ correspond to codimension $i$ invariant subvarieties and thus faces of $P_\A$ of dimension $n-i.$ Moreover the degree of the embedded subvariety $[V(\xi_1)]\cdot \ldots\cdot [V(\xi_i)]$ is equal to $\el^{n-i}\cdot([V(\xi_1)]\cdot \ldots\cdot [V(\xi_i)])=(n-i)!\Vol(F),$ where $F$ is the corresponding face.
We can then conclude:
$$\begin{array}{c} c_n(J_1(\el_\A))=\sum_{\emptyset\neq F\prec P_\A}(n+1-i)(n-i)!(-1)^i \Vol(F)=\\
=\sum_{\emptyset\neq F\prec P_\A}(-1)^{\cod(F)}(\dim(F)+1)!\Vol(F)\end{array}$$
\end{proof}
\begin{example}
Consider the simplex $2\Delta_2$ in Example \ref{veronese}. All the edges have length  equal to two and therefore the toric embedding is $2$-jet spanned. The dual variety is then an hypersurface and the degree of the discriminant is given by $c_2(J_1({\mathcal O}_{\pr^2}(2))=c_2({\mathcal O}_{\pr^2}(1)\oplus {\mathcal O}_{\pr^2}(1)\oplus {\mathcal O}_{\pr^2}(1))=3.$
The volume formula gives in fact:
$$c_2(J_1({\mathcal O}_{\pr^2}(2))=6Vol(2\delta_2)-2\sum_1^3Vol(2\Delta_1)+3=12-12+3=3.$$
\end{example}
\begin{example} Consider the Segre embedding $\pr^1\times \pr^2\hookrightarrow \pr^5,$ associated to the polytope $Q.$ Then $c_3(J_1(\el))=4!\frac{1}{2}-3!(1+1+1+\frac{1}{2}+\frac{1}{2})+2(9)-6=0.$ This embedding is therefore dually defective.
\begin{center}
\begin{tikzpicture}
\draw [fill=blue!50,opacity=.2](0,0,0)--(1,0,0)--(1,1,0)--(0,1,0)--(0,0,0)--(0,0,1)--(0,1,0)--(0,0,0)--(1,0,0)--(1,0,1)--(0,0,1)--(0,0,0)--(1,0,0)--(1,0,1)--(1,1,0);
\node [above] at (-0.5,0.5,0) {$Q$};
\fill (0,0,0) circle (3pt) node[above]{} ;
\fill (1,0,0) circle (3pt) node[above]{} ;
\fill (0,1,0) circle (3pt) node[above] {};
\fill (1,1,0) circle (3pt) node[above]{} ;
\fill (1,0,1) circle (3pt) node[above]{} ;
\fill (0,0,1) circle (3pt) node[above]{} ;
\end{tikzpicture}
\end{center}
\end{example}
The following is an amusing observation, which is a simple consequence of the previous characterization.
\begin{corollary}\label{eqchern}
Let $P_\A$ be a smooth polytope such that $\A=P_\A\cap\ZZ^n.$ Then
$$\sum_{\emptyset\neq F\prec P_\A}(-1)^{\cod(F)}(\dim(F)+1)!\Vol(F)\geq 0$$
\end{corollary}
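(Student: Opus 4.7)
The plan is to reduce the inequality to a positivity statement about a single Chern number via Proposition \ref{chern2}, and then to use the fact that the first jet bundle of $\el_\A$ is globally generated. Concretely, Proposition \ref{chern2} identifies
\[
\sum_{\emptyset\neq F\prec P_\A}(-1)^{\cod(F)}(\dim(F)+1)!\Vol(F)=c_n(J_1(\el_\A)),
\]
so it suffices to show $c_n(J_1(\el_\A))\geq 0$.

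To set up the positivity step, I would first record that $\el_\A$ is $1$-jet spanned. Since $P_\A$ is smooth, $X_\A$ is a smooth projective toric variety; since $P_\A$ is moreover a lattice polytope, every edge has lattice length at least one, which by the edge-length characterization of $k$-jet spanning on smooth toric varieties recalled earlier is equivalent to $\el_\A$ being $1$-jet spanned. In particular the jet map $jet^1\colon X_\A\times H^0(X_\A,\el_\A)\to J_1(\el_\A)$ is surjective, exhibiting $J_1(\el_\A)$ as a globally generated vector bundle of rank $n+1$ on the $n$-dimensional variety $X_\A$.

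To finish, I would apply Lemma \ref{chern1} with $k=1$: either $\cod(X_\A^\vee)>1$ and $c_n(J_1(\el_\A))=0$, or $X_\A^\vee$ is a hypersurface and $c_n(J_1(\el_\A))=n_1\deg(X_\A^\vee)$ is a product of positive integers (indeed $n_1=1$ in the classical case $k=1$, as noted after Lemma \ref{chern1}). In either case $c_n(J_1(\el_\A))\geq 0$. A purely formal alternative for this last step replaces Lemma \ref{chern1} by the Porteous formula: the top Chern class of a globally generated rank-$(n{+}1)$ bundle on a smooth $n$-fold equals the degree of the degeneracy locus of two general sections, an effective zero-cycle. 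The main conceptual move is the translation of the alternating face sum into the Chern number $c_n(J_1(\el_\A))$; once this identification is available, non-negativity is a formal consequence of global generation, and no combinatorial obstacle remains, whereas a direct derivation of the inequality from the volumes of faces of $P_\A$ would appear considerably more delicate.
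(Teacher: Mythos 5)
Your proposal is correct and follows essentially the same route as the paper: translate the alternating face sum into $c_n(J_1(\el_\A))$ via Proposition \ref{chern2}, observe that $J_1(\el_\A)$ is globally generated because $\el_\A$ is $1$-jet spanned (the paper phrases this as the jet map having maximal rank since $\el_\A$ defines an embedding), and conclude non-negativity from the fact that Chern class degrees of a spanned bundle are non-negative. Your extra justifications of that last step (the dichotomy from Lemma \ref{chern1}, or effectivity of the degeneracy locus) are just more explicit versions of the same positivity fact the paper invokes.
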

\begin{proof} Because the associated line bundle $\el_\A$ defines an embedding of the variety $X_\A,$ the map
$jet^1$ has maximal rank and thus the vector bundle $J_1(\el_\A)$ is spanned (by the global sections of $\el_\A$). It follows that the degree of its Chern classes must be non negative which implies the assertion.
   \end{proof}  
   
   Now we can state the characterization of $\QQ$-factorial and non singular toric embeddings admitting discriminant $\Delta_\A=1.$ The theorem will include the combinatorial characterization and the equivalent algebraic geometry description. The proof in the non singular case will be given in Section \ref{final}. 
   
\begin{theorem}\cite{DiR06,CDR08}\label{characterization1}
Let $\A=P_\A\cap\ZZ^n$ and assume that $X_\A$ is $\QQ$-factorial. Then the following equivalent statements hold.
\begin{enumerate}
\item  The point-configuration $\A$ is dually defective if and only if $P_\A$ is a Cayley sum of the form  $P_\A\cong \Cayley(R_0, \ldots, R_t)_{(\pi,Y)},$ where $\pi(P)$ is a simplex (not necessarily unimodular) in  $\RR^t$ and $R_0,\ldots,R_t$ are normally equivalent polytopes with $t>\frac{n}{2}.$ If moreover $P_\A$ is smooth then $\pi(P)$ is a unimodular simplex.
\item The projective dual variety of the toric embedding $X_\A\hookrightarrow \pr^{|\A\cap\ZZ^n|-1}$ has codimension  $s\geq 2$ if and only if
$X_\A$ is a Mori-fibration, $X_\A\to Y$  and $\dim(Y)< \dim(X)/2.$ If moreover $X_\A$ is non singular then $(X_\A, L_\A)=(\pr(L_0\oplus\cdots\oplus L_t),\xi),$ where $L_i$ are line bundles on a toric variety $Y$ of dimension $m<t.$
\end{enumerate}
  \end{theorem}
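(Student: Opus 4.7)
The strategy is to reduce Theorem \ref{characterization1} to a jet-bundle computation via Lemma \ref{chern1} and Proposition \ref{chern2}. Since $P_\A$ is smooth, $\el_\A$ is very ample and $1$-jet spanned, so dual defectiveness of $\A$ (i.e.\ $\cod(X_\A^\vee) \geq 2$) is equivalent to the vanishing $c_n(J_1(\el_\A)) = 0$, which Proposition \ref{chern2} rewrites as the combinatorial identity
\[
\sum_{\emptyset \neq F \prec P_\A} (-1)^{\cod(F)} (\dim(F)+1)!\, \Vol(F) = 0.
\]
Once this equivalence is set up, the combinatorial characterization of polarized toric fibrations from Section \ref{Lec1} translates freely between the Cayley structure in part (1) and the projective-bundle description in part (2); the core task is therefore to show that this identity on a smooth $P_\A$ is equivalent to the Cayley decomposition with $t > n/2$ and unimodular $\pi(P_\A) = \Delta_t$.

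For the easy direction ($\Leftarrow$), suppose $P_\A \cong \Cayley(R_0, \ldots, R_t)_{(\pi, Y)}$ with $\pi(P_\A) = \Delta_t$ unimodular and $t > n/2$, so $(X_\A, \el_\A) \cong (\pr(L_0 \oplus \cdots \oplus L_t), \xi)$ over a smooth toric base $Y$ of dimension $m = n - t < t$. Using the $1$-jet sequence together with the relative cotangent and relative Euler sequences for $\pi \colon \pr(E) \to Y$, one expands $c_n(J_1(\xi))$ as a polynomial in $c_1(\xi)$ and classes pulled back from $Y$. Because $\pi_*(c_1(\xi)^j) = 0$ for $j < t$ and lies in degree $j - t$ on $Y$ otherwise, and $\dim Y = m < t$, every monomial that survives the push-forward is a class of degree $> m$ on $Y$ and therefore vanishes. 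Hence $c_n(J_1(\xi)) = 0$ and $\A$ is dually defective.

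The hard direction ($\Rightarrow$) is the main obstacle: assuming dual defectiveness one must recover the Cayley structure and the bound $t > n/2$ from scratch. I would use the geometric content of the vanishing. When $\cod(X_\A^\vee) \geq 2$, a general fiber of the incidence map $\alpha_1$ in diagram (\ref{dualcorrespondence}) is a positive-dimensional \emph{linear} subspace $\Pi \cong \pr^t \subset X_\A$, by the classical linearity of contact loci of dual varieties. The torus $T = T_N$ acts on the family of contact loci; by irreducibility a general $\Pi$ is stabilised by a codimension-$t$ subtorus of $T$, and the corresponding primitive sublattice $\Delta \subset N$ satisfies the split-fan criterion of the combinatorial characterization in Section \ref{Lec1}. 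This produces a polarized toric fibration $X_\A \to Y$ whose general fibre is $\Pi \cong \pr^t$ with $\el_\A|_\Pi = \sO_{\pr^t}(1)$, and the combinatorial proposition of Section \ref{Lec1} then delivers $P_\A = \Cayley(R_0, \ldots, R_t)_{(\pi, Y)}$. The dimension bound $\dim Y < t$ (equivalently $t > n/2$) falls out of a dimension count on $\pr(K_1^\vee)$: if $\dim Y \geq t$ then $\alpha_1$ would be generically finite and $X_\A^\vee$ a hypersurface, contradicting defectiveness. Smoothness of $P_\A$ guarantees that the lattice edges joining different $R_i$ have length one, so $\pi(P_\A) = \Delta_t$ is unimodular, and ensures that the fibration lifts to an honest projective bundle $\pr(L_0 \oplus \cdots \oplus L_t) \to Y$ with each $L_i$ a genuine line bundle on the smooth base $Y$.
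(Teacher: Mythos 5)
Your reduction of dual defectiveness to $c_n(J_1(\el_\A))=0$ is exactly the paper's step $(c)\Leftrightarrow(d)$ (Lemma \ref{chern1} plus Proposition \ref{chern2}), but both of your remaining directions have real gaps. The serious one is the defective $\Rightarrow$ Cayley direction: your key assertion that, by irreducibility, a general contact locus $\Pi\cong\pr^t$ is stabilised by a codimension-$t$ subtorus of $T$ is unjustified and false in general. The family of contact loci is torus-stable, but its individual general members are not: already for the Segre embedding of $\pr^1\times\pr^n$ ($n\geq 2$), a general tangent hyperplane has contact locus a general linear subspace which is invariant under no positive-dimensional subtorus, so there is no sublattice $\Delta\subset N$ to feed into the split-fan criterion of Section \ref{Lec1}. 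This is precisely where the paper has to work: it shows the linear spaces cover $X_\A$ (a limit argument), uses Ein's normal-bundle theorem and the Beltrametti--Sommese--Wisniewski results on covering families of lines to compute the nef value $\tau(\el_\A)=\frac{n+k}{2}+1$ and to show the nef-value morphism is a fiber-type extremal contraction with $\tau=\mu$, and then invokes toric Mori theory (Reid, \cite{Re83}) to identify the general fiber of that contraction as a linearly embedded $\pr^t$; only after that does the Section \ref{Lec1} proposition yield the Cayley structure, and the bound $t>n/2$ comes out of this adjunction-theoretic computation, not from the dimension count on $\pr(K_1^\vee)$ you sketch (which by itself does not show that $\dim Y\geq t$ forces $\alpha_1$ to be generically finite).

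The easy direction also needs repair as written. In your push-forward count, a monomial $\xi^{\,j}\cdot\pi^*\beta$ with $\deg\beta=n-j$ survives only if $j\geq t$, and then $\pi_*\bigl(\xi^{\,j}\pi^*\beta\bigr)$ has degree $(j-t)+(n-j)=n-t=m=\dim Y$ on $Y$, i.e.\ exactly top degree, not degree $>m$; so nothing vanishes for the reason you give. A correct Chern-class argument uses the jet sequence together with the relative Euler sequence to get $c(J_1(\xi))=c(\pi^*\Omega^1_Y\otimes\xi)\cdot\pi^*c(E)$, and then each summand $c_i(\pi^*\Omega^1_Y\otimes\xi)\cdot\pi^*c_{n-i}(E)$ dies because one needs both $i\leq m$ (rank) and $n-i\leq m$ (dimension of $Y$), impossible when $n=m+t>2m$. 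The paper itself avoids this computation and proves $(a)\Rightarrow(c)$ geometrically, via the duality diagram over $Y$ (Debarre's lemma) and the inequality $\cod(\pr(E)^\vee)\geq\cod(F^\vee)-\dim Y=t+1-m>1$. Finally, note that the theorem is stated for $\QQ$-factorial $X_\A$, where $J_1(\el_\A)$ need not be locally free, so your jet-bundle reduction only covers the smooth case (as does the paper's Section \ref{final}); you should state that restriction explicitly.
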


   Proposition \ref{characterization1} provides a characterization of the class of smooth polytopes achieving the minimal value $0.$ 
   \begin{corollary}\label{chern}
   Let $P$ be a convex smooth lattice polytope. Then 
   $$\sum_{\emptyset\neq F\prec P_\A}(\dim(F)+1)!(-1)^{\cod(F)}\Vol(F)= 0$$
   If and only if $P=\Cayley(R_0,\ldots,R_t)$ for normally equivalent smooth lattice polytopes $R_i$ with $\dim(R_i)<t.$
   \end{corollary}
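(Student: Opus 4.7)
The plan is to chain together three earlier results in a direct sequence of equivalences. Set $\A=P\cap\ZZ^n$, so $(X_\A,\el_\A)$ is the smooth polarized toric variety associated with the smooth polytope $P$. By Proposition \ref{chern2}, the alternating volume sum in the statement equals the top Chern number $c_n(J_1(\el_\A))$, so the corollary reduces to the equivalence: $c_n(J_1(\el_\A))=0$ if and only if $P$ has the claimed Cayley form.

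For the first link I invoke Lemma \ref{chern1} with $k=1$. Since $P$ is a smooth lattice polytope, every edge has integer length $\geq 1$, so by the edge-length criterion for $k$-jet spannedness recalled earlier $\el_\A$ is $1$-jet spanned; this is exactly the hypothesis of the lemma. The expected codimension of the classical dual variety is $\binom{n+1}{1}-n=1$, so Lemma \ref{chern1} gives $c_n(J_1(\el_\A))=0$ precisely when $\cod(X_\A^\vee)>1$, i.e.\ when $\A$ is dually defective.

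The final link is Theorem \ref{characterization1}(1), applied in the smooth case: $\A$ is dually defective if and only if $P\cong \Cayley(R_0,\ldots,R_t)_{(\pi,Y)}$ with $\pi(P)$ a unimodular simplex, the $R_i$ normally equivalent, and $t>n/2$. To match the statement of the corollary I need two minor conversions. First, each $R_i$, being a face of the smooth polytope $P$, is itself smooth. Second, since $\dim P=n$ and $\dim \pi(P)=t$, every $R_i$ has dimension $n-t$, so the bound $t>n/2$ is the same as $\dim(R_i)<t$.

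I do not expect a genuine obstacle, since the corollary is just the translation of Theorem \ref{characterization1} through the Chern-class computation of Proposition \ref{chern2}. The only items of bookkeeping are the verification of the $1$-jet spannedness hypothesis in Lemma \ref{chern1} (automatic from smoothness of $P$) and the numerical equivalence $t>n/2 \Longleftrightarrow \dim(R_i)<t$.
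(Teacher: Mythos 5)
Your proposal is correct and follows exactly the route the paper intends: translate the alternating volume sum into $c_n(J_1(\el_\A))$ via Proposition \ref{chern2}, use Lemma \ref{chern1} (with $1$-jet spannedness automatic from smoothness) to identify its vanishing with dual defectivity, and then apply the smooth case of Theorem \ref{characterization1}. Your bookkeeping (smoothness of the faces $R_i$ and the equivalence $t>n/2 \Leftrightarrow \dim(R_i)=n-t<t$) matches the paper's intended reading, so there is nothing to add.
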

  
\section{Toric fibrations and adjunction theory}\label{Lec3}
The classification of projective algebraic varieties is a central problem in Algebraic Geometry dating back to early nineteenth  century. The way one can realistically carry out a classification theory is through invariants, such as the degree, genus, Hilbert polynomial. Modern adjunction theory  and Mori theory are the basis for major advances in this area. 

Let $(X,\el)$ be a polarized $n$-dimensional variety. Assume that $X$ is Gorenstein (i.e. the canonical class $K_X$ is a Cartier divisor). The two key invariants occurring in classification theory, see \cite{Fu90}, are 
the {\em effective log threshold} $\mu(\el)$ and  the {\em nef value} $\tau(\el):$ 
$$\begin{array}{c}\mu(\el) := \sup_\RR \{s \in \QQ \,:\, \dim(H^0(K_X + s \el)) = 0\}\\
\tau(\el) := \min_\RR \{s \in \RR \,:\, K_X + s \el \textup{ is nef}\}.\end{array}$$
Both invariants are at most equal to $n+1.$
Kawamata proved that $\mu(\el)$ is indeed a rational number  and recent advances in the minimal model program establish the same  for $\mu(\el).$  
 They can be visualized as follows.

 Traveling from  $\el$ in the direction of the vector $K_X$ in the Neron-Severi space ${\rm NS}(X)\otimes\RR$ of divisors, $\el+(1/\mu(\el))K_X$ is the meeting point with the cone of effective divisors ${\rm Eff}(X)$ and $\el+(1/\tau(\el))K_X$ is the meeting point with the cone of nef-divisors ${\rm Nef}(X),$ see Fig. \ref{nef-cone}.
 
 A multiple of the nef line bundle $K_X+\tau\el$ defines a morphism $X\to\pr^M$ which can be decomposed (Remmert-Stein factorization) as a composition of a morphism $\phi_\tau:X\to Y$ with connected fibers onto a normal variety $Y$ and finite-to-one morphism $Y\to\pr^M.$ The map $\phi_\tau$ is called the {\it nef-value morphism}.
Kawamata showed that if one writes $r\tau=u/v$ for coprime integers $u,v,$ then:
$$u\leq r(1+\max_{y\in Y}(\dim(\phi_\tau^{-1}(y))) .$$

\begin{corollary}\label{proj}
Let $(X,\el)$ be a polarized  variety. Then the nef-value achieves the maximum value  $\tau(\el)=n+1$ if and only if 
$(X,\el)=(\pr^n, {\mathcal O}_{\pr^n}(1)).$
\end{corollary}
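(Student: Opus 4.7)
My plan is to combine the Kawamata bound stated just above the corollary with the classical Kobayashi--Ochiai characterization of projective space.

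First, since $\tau(\el)=n+1$ is already an integer, I would apply the inequality $u\leq r(1+\max_{y\in Y}\dim \phi_\tau^{-1}(y))$ with $r=v=1$ and $u=n+1$. This yields
\[
n+1\;\leq\;1+\max_{y\in Y}\dim \phi_\tau^{-1}(y),
\]
so some fiber of the nef-value morphism $\phi_\tau\colon X\to Y$ has dimension at least $n$. Because $X$ is irreducible of dimension $n$, this fiber must be all of $X$, and therefore $Y$ is a point and $\phi_\tau$ is constant.

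Next, I would translate the conclusion that $\phi_\tau$ is constant into numerical information about the canonical class. Since a positive multiple of $K_X+\tau(\el)\el$ defines the morphism $\phi_\tau$ onto the point $Y$, the $\QQ$-line bundle $K_X+(n+1)\el$ is numerically trivial, so
\[
-K_X\;\equiv\;(n+1)\el
\]
as $\QQ$-Cartier divisors. In particular $-K_X$ is ample and $X$ is a Gorenstein Fano variety of index at least $n+1$ relative to $\el$.

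Finally, the result of Kobayashi--Ochiai characterizes $\pr^n$ exactly as the smooth Fano variety of dimension $n$ admitting an ample line bundle $\el$ with $-K_X\equiv(n+1)\el$, forcing $(X,\el)\cong(\pr^n,\sO_{\pr^n}(1))$. The step that needs the most care is the application of Kobayashi--Ochiai: classically it requires $X$ smooth, so if one only assumes $X$ Gorenstein one needs the generalization to the Gorenstein case (which does hold, but must be invoked). In the toric context that dominates this paper the conclusion is particularly clean, since a Gorenstein toric Fano variety of index $n+1$ is immediately seen to come from the reflexive simplex $\Delta_n$, whose associated polarized variety is precisely $(\pr^n,\sO_{\pr^n}(1))$. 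The converse direction, that $\tau(\sO_{\pr^n}(1))=n+1$, is an immediate consequence of $K_{\pr^n}=-(n+1)\sO_{\pr^n}(1)$, so equality in the corollary holds.
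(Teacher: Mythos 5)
Your proof follows essentially the same route as the paper: the Kawamata bound forces a fiber of $\phi_\tau$ of dimension $n$, so $\phi_\tau$ contracts $X$ to a point, giving (numerical) triviality of $K_X+(n+1)\el$, and then Kobayashi--Ochiai yields $(X,\el)=(\pr^n,\sO_{\pr^n}(1))$. Your added remarks on the Gorenstein/smoothness hypothesis in Kobayashi--Ochiai and on the easy converse are reasonable refinements of the same argument, not a different approach.
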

 \begin{proof} Consider the nef value morphism $\phi_\tau:X\to Y$ and observe that 
 $$(n+1)\leq (1+\max_{y\in Y}(\dim(\phi_\tau^{-1}(y))).$$ This implies that the dimension of a fiber of $\phi_\tau$ must be $n$ and thus that the morphism contracts the whole space $X$ to a point.  By construction, the fact that $\phi_\tau$ contracts the whole space implies  that $K_X+(n+1)\el={\mathcal O}_X.$ A celebrated criterion in projective geometry, called the Kobayashi-Ochiai theorem, asserts that if $L$ is an ample line bundle such that $K_X+(n+1)\el={\mathcal O}_X$ then $(X,\el)=(\pr^n, {\mathcal O}_{\pr^n}(1)).$
 \end{proof}

\begin{figure}
\centering
\def\svgwidth{8cm}
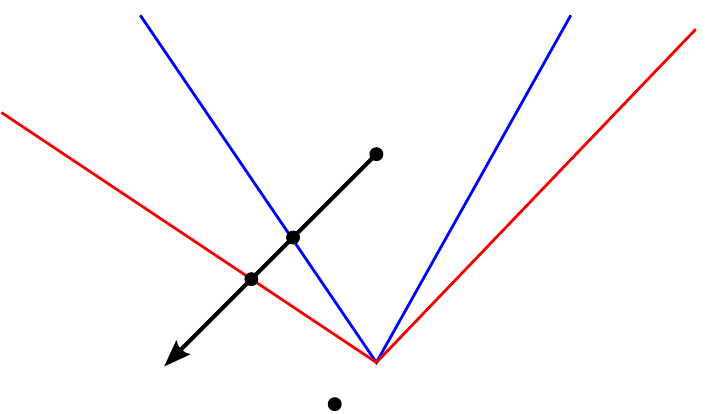
\caption{Illustrating $\mu(\el)$ and $\tau(\el)$}
\label{nef-cone}
\end{figure} 

\begin{remark}\label{remark}Recall that the interior of the closure of the effective cone is the cone of big divisors, $ (\overline{{\rm Eff}(X)})^\circ={\rm Big}(X),$ and that the closure of the ample cone is the nef cone, $\overline{{\rm Ample}(X)}={\rm Nef}(X).$
In particular the equality $\tau(\el)=\mu(\el)$ occurs if and only if the line bundle $K_X+\tau(\el)\el$ is nef and {\it not} big, which implies that $\phi_\tau$ defines a fibration structure on $X.$ 
\end{remark}
 A fibration structure on an algebraic variety is a powerful geometrical tool as many  invariants are induced by corresponding invariants on the (lower dimensional) basis and generic fibre. Criteria for a space to be a fibration are therefore highly desirable. Beltrametti, Sommese and Wisniewski conjectured the if the effective log threshold is strictly  bigger than half the dimension then the nef-value morphism should be a fibration.
\begin{conjecture}\cite{BS94}\label{1}
If $X$ is non singular and   $\mu(\el)>(n+1)/2$ then $\mu(\el)=\tau(\el).$
\end{conjecture}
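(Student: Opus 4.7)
The plan is to establish Conjecture \ref{1} in the toric smooth case by reducing it to Characterization 3. Assume $(X,\el) = (X_P, \el_P)$ for a smooth lattice polytope $P \subset \RR^n$, so that the log-canonical threshold $\mu(\el)$ coincides with $\mu(P)$, the rational analogue of the codegree.

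First, I would translate the hypothesis $\mu(\el_P) > (n+1)/2$ into a combinatorial statement about $P$. In the toric setting the effective log threshold admits a description via dilations of $P$: it measures the largest rational $s > 0$ such that $sP$ contains no interior lattice points, and is therefore bounded above by $\cd(P)$. Exploiting the integrality of $\cd(P)$ for a lattice polytope, together with elementary convexity arguments relating the rational threshold to the integer one for smooth polytopes, the hypothesis lifts to the integer inequality $\cd(P) \geq (n+3)/2$.

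Second, I would invoke Characterization 3. The inequality $\cd(P) \geq (n+3)/2$ is precisely condition (a) of that statement, and the implication (a) $\Rightarrow$ (c) immediately yields $\mu(\el_P) = \tau(\el_P) \geq (n+3)/2$, which is exactly the conclusion of Conjecture \ref{1} in this setting. For conceptual clarity, I would also trace through why the structural conclusion (b) forces $\mu = \tau$ directly: by (a) $\Rightarrow$ (b), $P \cong \Cayley(R_0, \ldots, R_t)_{\pi,Y}$ with $t+1 = \cd(P)$ and $t > n/2$, which by Section \ref{Lec1} corresponds to a linear toric fibration $f : X_P \to Y$ with fiber $\pr^t$ and $\el|_F = \mathcal{O}_{\pr^t}(1)$. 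Consequently $K_X + (t+1)\el$ restricts trivially to every fiber and is pulled back from the lower-dimensional base $Y$, which forces $(K_X + (t+1)\el)^n = 0$. Thus $K_X + \tau(\el)\el$ is nef but not big, and by Remark \ref{remark} this says exactly $\tau(\el) = \mu(\el)$.

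The main obstacle is the first step: the passage from the strict real inequality $\mu(\el_P) > (n+1)/2$ to the integer inequality $\cd(P) \geq (n+3)/2$ that triggers Characterization 3. This requires a careful combinatorial analysis of how interior lattice points first appear in rational dilates of $P$, and depends essentially on smoothness of $P$; a non-smooth $P$ could in principle have $\mu(\el_P)$ lying strictly between consecutive integer codegree thresholds, and indeed this is precisely why the conjecture is subtler outside the smooth category. Once Characterization 3 is invoked, the remaining geometric content is formal.
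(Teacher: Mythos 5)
There is a fundamental mismatch here: the statement you are asked to prove is Conjecture \ref{1}, which is an \emph{open conjecture} of Beltrametti--Sommese--Wisniewski about arbitrary nonsingular polarized varieties $(X,\el)$, cited from \cite{BS94}. The paper offers no proof of it; it only proves toric analogues (Theorem \ref{conj} for smooth polytopes, and the $\QQ$-codegree result of \cite{DRHNP13}). Your proposal silently replaces the statement by its toric special case ``$(X,\el)=(X_P,\el_P)$ for a smooth lattice polytope $P$'', so even if every step were correct you would not have established the conjecture as stated, since non-toric smooth polarized varieties are precisely where it remains open and where no polytope $P$, codegree, or Characterization 3 is available.

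Even within the toric reduction, your ``first step'' has a genuine gap that you acknowledge but do not close. From $\mu(\el_P)>(n+1)/2$ the only formal consequences are $\cd(P)\geq\mu(\el_P)>(n+1)/2$ and integrality of $\cd(P)$; for $n$ odd this does give $\cd(P)\geq(n+3)/2$, but for $n$ even it only gives $\cd(P)\geq(n+2)/2$, which is not enough to trigger condition (a) of Characterization 3. There is no ``elementary convexity argument'' bridging this: the passage from a bound on the $\QQ$-codegree $\mu$ to a Cayley structure is exactly the hard content of the theorem from \cite{DRHNP13} quoted in Section \ref{Lec3} (which needs $\mu(P)>(n+1)/2$ for $n$ odd and $\mu(P)\geq(n+1)/2$ for $n$ even, and concludes only that $P$ is a Cayley polytope, not yet that $\mu=\tau$). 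By contrast, the route the paper actually takes in the smooth toric setting (Theorem \ref{final theorem}, implication $(b)\Rightarrow(a)$, and Lemma \ref{Cayley}) runs through toric Mori theory: one analyzes the nef-value morphism, uses \cite{BSW92} and \cite{Re83} to show it is an extremal contraction whose general fiber is a linearly embedded $\pr^t$ with $t>n/2$, and only then reads off $\mu=\tau$. Your final paragraph (fibration $\Rightarrow$ $K_X+(t+1)\el$ nef and not big $\Rightarrow$ $\mu=\tau$ via Remark \ref{remark}) is fine as far as it goes, but it sits downstream of the two unproved reductions above.
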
  

Let us now assume that the algebraic variety is toric. 
 In this case it is immediate to see that the defined invariants are rational numbers as the cones ${\rm Eff}(X), {\rm Big}(X), {\rm Ample}(X), {\rm Nef}(X)$ are all rational cones.

We have seen in Section \ref{Lec1} that toric fibrations are associated to certain Cayley polytopes.  Analogously to the classification theory of projective algebraic varieties it is important to find invariants of polytopes that would characterize a Cayley structure. One invariant which has attracted increasing attention in recent years is the {\it codegree} of a lattice polytope:
$$\cd(P)=min\{t\in\ZZ_{>0}\text{ such that } tP\text{ contains interior lattice points} \}.$$
Via Ehrhart theory one  can conclude  that $\cd(P)\leq n+1$ and that $\cd(P)=n+1$ if and only if $P=\Delta_n.$ This is in fact a simple consequence of our previous observations.
\begin{corollary}
Let $P$ be a Gorenstein lattice polytope. Then $\cd(P)=n+1$ if and only if $P=\Delta_n.$
\end{corollary}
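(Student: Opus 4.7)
The plan is to deduce the corollary from Corollary \ref{proj} by identifying $\cd(P)$ with the nef-value $\tau(\el_P)$ under the Gorenstein hypothesis.

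First I would translate the Gorenstein hypothesis into toric-geometric language. For a Gorenstein polytope $P$ of index $r$, the dilate $rP$ is, after a lattice translation, a reflexive polytope: its unique interior lattice point sits at lattice distance exactly one from every facet of $rP$. Via the toric dictionary between polytopes and torus-invariant Cartier divisors, this reflexivity is equivalent to the linear equivalence $K_{X_P}+r\el_P\sim\sO_{X_P}$, where $r=\cd(P)$ coincides with the Gorenstein index. Applying the hypothesis $\cd(P)=n+1$ yields $K_{X_P}+(n+1)\el_P\sim 0$.

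Next I would bound $\mu(\el_P)$ from below. For any rational $s<n+1$, the $\QQ$-line bundle $K_{X_P}+s\el_P\sim(s-n-1)\el_P$ is a negative rational multiple of the ample line bundle $\el_P$, so no positive integer multiple of it has nonzero global sections. This forces $\mu(\el_P)\geq n+1$, and combining with the general bounds $\mu(\el_P)\leq\tau(\el_P)\leq n+1$ recorded earlier in the section, we obtain $\mu(\el_P)=\tau(\el_P)=n+1$.

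Finally I would invoke Corollary \ref{proj}: since $\tau(\el_P)=n+1$, the polarized variety $(X_P,\el_P)$ must be isomorphic to $(\pr^n,\sO_{\pr^n}(1))$, whose associated polytope is the standard unimodular simplex. Hence $P\cong\Delta_n$, as required. The converse $\cd(\Delta_n)=n+1$ is an elementary count: interior lattice points of $s\Delta_n$ are $n$-tuples of positive integers with sum at most $s-1$, and such tuples exist precisely when $s\geq n+1$.

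The main obstacle is the first step, namely making precise the definition of \emph{Gorenstein lattice polytope} used in the statement and extracting from it the linear equivalence $K_{X_P}+\cd(P)\,\el_P\sim 0$. Once this identification between reflexive dilates and the vanishing of the adjoint line bundle is in hand, the rest of the argument is a transparent application of the adjunction-theoretic dictionary and Corollary \ref{proj} already in place in Section \ref{Lec3}.
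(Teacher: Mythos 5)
Your argument hinges on reading ``Gorenstein lattice polytope'' as a Gorenstein polytope in the combinatorial sense, i.e.\ one admitting a dilate $rP$ that is a translate of a reflexive polytope, from which you extract $K_{X_P}+\cd(P)\,\el_P\sim 0$. That is not the hypothesis the paper works with: in Section \ref{Lec3} ``Gorenstein'' means that the canonical class $K_{X_P}$ is Cartier, and the proof in the paper accordingly starts from ``the Gorenstein toric variety associated to $P$''. This class is strictly larger than the class of combinatorially Gorenstein polytopes (for instance, any smooth polytope qualifies, such as $[0,1]\times[0,2]$, which has no reflexive dilate). Under the correct hypothesis there is no index $r$ with $rP$ reflexive, so your first step has nothing to extract; worse, the linear equivalence $K_{X_P}+(n+1)\el_P\sim 0$ together with Kobayashi--Ochiai is essentially the \emph{conclusion} $(X_P,\el_P)=(\pr^n,\mathcal{O}_{\pr^n}(1))$, so assuming it from the outset begs the question. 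You flagged this yourself as the main obstacle, and it is indeed a genuine gap rather than a routine verification.

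The paper closes this gap without ever producing a trivial adjoint bundle: since $K_{X_P}$ is Cartier, interior lattice points of $tP$ correspond to global sections of $K_{X_P}+t\el_P$ for integral $t$, so $\cd(P)=n+1$ says $H^0(K_{X_P}+t\el_P)=0$ for all $t\leq n$. On a toric variety nef line bundles are globally generated, hence have nonzero sections, so $K_{X_P}+t\el_P$ is not nef for $t\leq n$; combined with the general bound $\tau(\el_P)\leq n+1$ this pins down $\tau(\el_P)=n+1$, and Corollary \ref{proj} then yields $(X_P,\el_P)=(\pr^n,\mathcal{O}_{\pr^n}(1))$, i.e.\ $P=\Delta_n$. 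Your computation of $\mu$ and your elementary verification that $\cd(\Delta_n)=n+1$ are fine, and your argument is correct verbatim for the smaller class of combinatorially Gorenstein polytopes (where indeed $\cd(P)$ equals the Gorenstein index), but to prove the stated corollary you should replace the reflexivity step by the ``nef $\Rightarrow$ globally generated $\Rightarrow$ has sections'' argument above, which needs only that $K_{X_P}$ is Cartier.
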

\begin{proof}
Let $(X,\el)$ be the Gorenstein toric variety associated to $P.$ Notice that, because $K_X=-\sum D_i$ where the $D_i$ are the invariant divisors, the polytope defined by the line bundle $K_X+t\el$ is the convex hull of the interior points of $tP.$ The equality $\cd(P)=n+1$ is equivalent to  $H^0(K_X+t\el)=0$ for $t\leq n.$ Because nef line bundles must have sections (in particular being nef is equivalent to being  globally generated on toric varieties) we have $\tau(\el)\geq \cd(P)=n+1.$ It follows from Corollary \ref{proj} that $(X,\el)=(\pr^n,{\mathcal O}_{\pr^n}(1))$ and thus $P=\Delta_n.$
\end{proof}

Let us now examine the class of Cayley polytopes we encountered in the characterization of dually defective toric embeddings. We will see  that this is a  class of polytopes  satisfying the strong lower bound
$\cd(P)\geq \frac{\dim(P)}{2}+1$ and the equality $\cd(P)=\mu(\el).$
\begin{lemma}\label{Cayley}
Let $P=\Cayley_{h,Y}(R_0,\ldots,R_t)$ with $t>\frac{n}{2},$ then:
$$\tau(\el_P)=\mu(\el_P)=\cd(P)=t+1\geq \frac{n+3}{2}.$$
\end{lemma}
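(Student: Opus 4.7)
The plan is to use the combinatorial description of polarized toric fibrations from the previous section. Since $\pi(P)=\Delta_t$, the Cayley polytope $P=\Cayley(R_0,\ldots,R_t)_{\pi,Y}$ corresponds to the polarized projective bundle $(\pi\colon \pr(L_0\oplus\cdots\oplus L_t)\to Y,\xi)$, where each $L_i$ is the ample line bundle on $Y$ with polytope $R_i$ and $\xi=\el_P$ is the tautological line bundle. Standard canonical-bundle formulas for a projective bundle with decomposable $E=L_0\oplus\cdots\oplus L_t$ yield
\[
K_{X_P}+(t+1)\,\el_P \;=\; \pi^{*}\bigl(K_Y+L_0+\cdots+L_t\bigr),
\]
and this identity is the pivot that converts every assertion into one about $Y$ and the Minkowski sum $R_0+\cdots+R_t$.

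First I would compute $\cd(P)=t+1$. Because $\pi_\RR$ sends interior lattice points of $sP$ into interior lattice points of $s\Delta_t$ and $\cd(\Delta_t)=t+1$, one obtains the lower bound $\cd(P)\geq t+1$ at once. For the reverse, the unique interior lattice point of $(t+1)\Delta_t$ is $v_0+\cdots+v_t$; the fiber of $(t+1)P$ above it is, up to translation, the Minkowski sum $R_0+\cdots+R_t$, so interior lattice points of $(t+1)P$ correspond to lattice points of the polytope of $K_Y+L_0+\cdots+L_t$. Since every $L_i$ is ample on $Y$ and $t+1>\dim Y=n-t$, the toric Fujita-type theorem --- the sum of $K_Y$ with at least $\dim Y+1$ ample line bundles is globally generated on a smooth (or $\QQ$-factorial) toric variety --- produces the required lattice point.

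The same dictionary gives $\mu(\el_P)=t+1$ almost for free: on the toric Gorenstein pair $(X_P,\el_P)$ the space $H^0(X_P,K_{X_P}+s\el_P)$ is spanned by the interior lattice points of $sP$, so it vanishes precisely when $s<\cd(P)$. Since $\mu(\el_P)\leq \tau(\el_P)$ in general, for the nef value it suffices to show that $K_{X_P}+(t+1)\el_P$ is nef. By the displayed identity this divisor is $\pi^{*}(K_Y+L_0+\cdots+L_t)$; pullbacks of nef line bundles along surjective morphisms are nef, and $K_Y+L_0+\cdots+L_t$ is nef on $Y$ by the same Fujita-type input used above. Combining,
\[
t+1 \;=\; \cd(P) \;=\; \mu(\el_P) \;\leq\; \tau(\el_P) \;\leq\; t+1,
\]
and the lower bound $t+1\geq (n+3)/2$ is immediate from $t>n/2$.

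The whole argument rests on the single input that $K_Y$ plus at least $\dim Y+1$ ample line bundles is globally generated on a smooth toric variety. That is precisely where the hypothesis $t>n/2$ --- equivalently $t+1>\dim Y$ --- enters in an essential way, and it is the main technical obstacle the proof must clear; once granted, the remainder is bookkeeping between the Cayley structure of $P$ and the projective bundle structure of $X_P$.
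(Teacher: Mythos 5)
Your setup is the same as the paper's (identify $(X_P,\el_P)$ with $(\pr(L_0\oplus\cdots\oplus L_t),\xi)$, use $K_{X_P}+(t+1)\xi=\pi^*(K_Y+L_0+\cdots+L_t)$, and feed the toric Fujita/Mustata result into $K_Y+L_0+\cdots+L_t$), and your codegree computation is fine --- indeed your slice-over-the-barycenter argument makes the upper bound $\cd(P)\leq t+1$ more explicit than the paper does. The genuine gap is the step ``$H^0(X_P,K_{X_P}+s\el_P)$ is spanned by the interior lattice points of $sP$, so it vanishes precisely when $s<\cd(P)$'', from which you conclude $\mu(\el_P)=\cd(P)$. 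The effective log threshold is a supremum over \emph{rational} $s=v/u$, interpreted through the multiples $H^0(uK_{X_P}+v\el_P)$; interior lattice points of the integral dilations $sP$ only control integer values of $s$, and as a general principle one gets only $\mu(\el_P)\leq\cd(P)$, with strict inequality entirely possible --- this is exactly the distinction between the $\QQ$-codegree and the codegree that Section \ref{Lec3} is careful about (cf.\ Figure \ref{fig:pyramid}, where $\mu(P)^{-1}=6$ while $\nf(P)^{-1}=2$). So the equality $\mu(\el_P)=t+1$ is not free; it is the one point where something must be proved, and without it your chain $t+1=\cd(P)=\mu\leq\tau\leq t+1$ collapses: you are left with only the upper bounds $\mu\leq\cd(P)$ and $\tau\leq t+1$, and neither $\mu=t+1$ nor $\tau=t+1$ follows.

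To close it, argue as the paper does: for $s=v/u<t+1$ write
\[
uK_{X_P}+v\xi \;=\; u\,\pi^{*}(K_Y+L_0+\cdots+L_t)+\bigl(v-u(t+1)\bigr)\xi ,
\]
and push forward by $\pi$; since $v-u(t+1)<0$, the direct image of that negative multiple of $\xi$ vanishes (equivalently, an effective divisor in this class would restrict to a general fiber $F\cong\pr^t$ as an effective divisor of negative degree $v-u(t+1)$), so $H^0(uK_{X_P}+v\xi)=0$ and $\mu(\el_P)\geq t+1$. This also supplies the missing lower bound for the nef value; alternatively, intersect $K_{X_P}+s\xi$ with a line $\ell$ in a fiber to get $s-(t+1)$, which is the paper's route (toric nefness criterion on the two families of invariant curves) to $\tau(\el_P)=t+1$ directly.
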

\begin{proof} Observe that $X_P=\pr(L_0\oplus\ldots\oplus L_t)$ for ample line bundles $L_i$ on the toric variety $Y$ and  $\el=\xi$ is the tautological line bundle. Consider the projective  bundle map $\pi: X_P\to Y.$  The Picard group of $X_P$ is generated by  the pull back of generators of ${\rm Pic}(Y)$ and by the tautological line bundle $\xi.$ Moreover the canonical line bundle is given by the following expression:
$$K_{X_P}=\pi^*(K_Y+L_0+\ldots+L_t)-(t+1)\xi.$$
The toric nefness criterion says that a line bundle on a toric variety is nef if  and only if the intersection with all the invariant curves is non-negative, see for example \cite{ODA}. On the toric variety  $\pr(L_0\oplus\ldots\oplus L_t)$ there are two types of rational invariant curves. The ones contained in the fibers $F\cong \pr^t$ and the pull back of rational invariant curves in $Y$ which will be denoted by $\pi^*(C)_i$ when contained in the invariant section defined by the polytope $R_i.$ For any rational invariant curve $C\subset F,$ it holds that  $\xi|_C={\mathcal O}_{\pr^1}(1)$ and  $\pi^*(D)\cdot C=0,$ for all divisors $D$ on $Y.$   For every  curve of the form $\pi^*(C)_i$  it holds that  $\pi^*(C)_i\cdot \pi^*(D)=C\cdot D$ and $\xi\cdot \pi^*(C)_i=L_i\cdot C.$ See \cite[Remark 3]{DiR06} for more details.  We conclude that $K_{X_P}+s\el$ is nef if the following is satisfied:
$$\begin{array}{cc}
[\pi^*(K_Y+L_0+\ldots+L_t)+(s-t-1)\xi]C=s-t-1\geq 0& \text{ if }C\subset F\\
(K_Y+L_0+\ldots+(s-t)L_i+\ldots +L_t)\cdot C\geq 0& \text{ if }C=\pi^*(C)_i\end{array}
$$
In \cite{MU02} Mustata proved a toric-Fujita conjecture showing that if for a line bundle $H$ on an $n$-dimensional toric variety, $H\cdot C\geq n$ for every invariant curve $C,$ then the adjoint bundle $K+H$ is globally generated, unless $H={\mathcal O}_{\pr^n}(n).$
Because $$[L_0+\ldots+(s-t)L_i+\ldots +L_t]\cdot C\geq (s-t)+t$$  it follows that $(K_Y+L_0+\ldots+(s-t)L_i+\ldots +L_t)\cdot C\geq 0$ for all invariant curves $C=\pi^*(C)_i$ if $s\geq t.$ This implies that $K_{X_P}+s\el$ is nef if and only if $s\geq t+1$ and thus  $\tau(\xi)=t+1.$

Consider now the projection $h:\RR^{n}\to\RR^t$ such that $h(P)=\Delta_t.$ Under this projection interior points of a  dilation $sP$ are mapped to  interior points of the corresponding dilation $s\Delta_t.$ This implies that $\cd(P)= t+1.$ Notice that $\mu(\el)\leq\cd(P)=t+1$ as interior points of $sP$ correspond to global sections of $K_{X_P}+s\el.$ 
On the other hand, see \cite[Ex. 8.4]{HA}:
$$\begin{array}{c}H^0(u(\pi^*(K_Y+L_0+\ldots+L_t))+(v-u(t+1))\xi)=\\ =H^0(\pi_*(u(\pi^*(K_Y+L_0+\ldots+L_t))+(v-u(t+1))\xi))= \\ =H^0(u(K_Y+L_0+\ldots+L_t)+\pi_*((u-v(t+1))\xi)=0\text{ if } v-u(t+1)< 0.\end{array}$$
This implies that $\mu(\el)\geq t+1$, which proves the assertion.
\end{proof}
Recently  Batyrev and Nill in \cite{BN08} classified polytopes with $\cd(P)=n$ and conjectured the following. \begin{conjecture}\cite{BN08} \label{2}
There is a function $f(n)$ such that  any  $n$-dimensional polytope $P$ with $\cd(P)\geq f(n)$  decomposes as a Cayley sum of lattice polytopes.\end{conjecture}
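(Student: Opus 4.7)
The plan is to reduce Conjecture \ref{2} to a problem in adjunction theory for toric varieties, using the dictionary between $\cd(P)$ and the effective log threshold $\mu(\el_P)$ exhibited in Lemma \ref{Cayley}. First I would verify that for a Gorenstein toric variety $(X_P,\el_P)$ one has $\mu(\el_P)=\cd(P)$: the interior lattice points of $sP$ are precisely the lattice points supporting sections of $K_X+s\el_P$, so the minimum such integer is $\cd(P)$, and no rational $s<\cd(P)$ can produce sections of any integer multiple after clearing denominators. This translates the hypothesis $\cd(P)\geq (n+3)/2$ into $\mu(\el_P)\geq (n+3)/2$, and in particular yields the strict inequality $\mu(\el_P)>(n+1)/2$ needed to trigger the Beltrametti--Sommese--Wisniewski conjecture.

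Second, I would invoke the toric version of Conjecture \ref{1}, asserting that $\mu(\el_P)>(n+1)/2$ forces $\mu(\el_P)=\tau(\el_P)$. By Remark \ref{remark}, this equality means $K_X+\tau\el_P$ is nef but not big, so the nef-value morphism $\phi_\tau\colon X_P\to Y$ has positive-dimensional general fiber $F$ and realizes a genuine fibration structure on $X_P$.

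Third, I would pin down the fiber type. Since $K_X+\tau\el_P$ is trivial on fibres of $\phi_\tau$, the general fiber $F$ is Gorenstein Fano with $-K_F=\tau\,\el_P|_F$, so the index of $\el_P|_F$ on $F$ equals $\tau$. Combining Kawamata's bound on the fiber dimension with the hypothesis $\tau\geq(n+3)/2$ forces $\dim F\geq (n+1)/2>n/2$ and $\tau=\dim F+1$, and the Kobayashi--Ochiai theorem (as used in the proof of Corollary \ref{proj}) then yields $(F,\el_P|_F)=(\pr^{\dim F},\mathcal O(1))$. Applying the combinatorial characterization of polarized toric fibrations from Section \ref{Lec1} to the sublattice of $N$ dual to the projection $\pi\colon M\to\Lambda$ induced by the fiber, one concludes that $P\cong\Cayley(R_0,\ldots,R_t)_{(\pi,Y)}$ with $t+1=\cd(P)$ and $t>n/2$.

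The main obstacle is Step 2: the Beltrametti--Sommese--Wisniewski assertion is conjectural in general, but in the toric setting it becomes tractable because the Mori and Nef cones are rational polyhedral and nefness of $K_X+s\el_P$ can be checked numerically on the finite set of invariant rational curves, precisely as in the computation of $\tau$ carried out in Lemma \ref{Cayley}. A secondary difficulty is the passage beyond the smooth case: the sheaves of jets cease to be locally free, so the Chern-class arguments of Section \ref{Lec2} must be replaced by the Euler-obstruction formulas of that section, and $\pi(P)$ may be a non-unimodular simplex producing a Mori fibration rather than a linear projective bundle. One expects $f(n)=(n+3)/2$ to be optimal for smooth polytopes and a mildly larger expression to suffice in the simple case; a uniform statement for arbitrary lattice polytopes is precisely the content of Conjecture \ref{2} that remains open.
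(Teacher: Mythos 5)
This statement is not proved in the paper by the route you propose: it is recorded as a conjecture of Batyrev--Nill, established by Haase, Nill and Payne \cite{HNP09} by purely combinatorial means (with $f(n)$ at most quadratic in $n$), while the adjunction-theoretic argument you sketch is carried out in the paper only for \emph{smooth} polytopes (Theorem \ref{conj}, via \cite{DDRP09,DN10}) and, for the $\QQ$-codegree, in \cite{DRHNP13}. Your reduction contains a concrete error at Step 1: for a Gorenstein (even smooth) toric pair one only has $\mu(\el_P)\leq \cd(P)$, and the inequality is typically strict. For example $(\pr^1,{\mathcal O}_{\pr^1}(3))$, i.e.\ $P=[0,3]$, has $\cd(P)=1$ but $\mu(\el_P)=2/3$: already $2\bigl(K_X+\tfrac{5}{6}\el_P\bigr)={\mathcal O}_{\pr^1}(1)$ has sections, contradicting your parenthetical claim that no rational $s<\cd(P)$ yields sections after clearing denominators. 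Consequently a lower bound on $\cd(P)$ gives no lower bound on $\mu(\el_P)$, and the hypothesis of the Beltrametti--Sommese--Wisniewski statement is never triggered. This is not a technicality: the gap between $\mu$ and $\cd$ is exactly the difference between the theorem of \cite{DRHNP13} (stated for $\mu(P)$) and the still-open Conjecture \ref{B} (stated for $\cd(P)$), so conflating them assumes away the hard part.

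Even granting Step 1, Step 2 invokes the toric version of Conjecture \ref{1} for arbitrary toric varieties, which is precisely Conjecture \ref{A} of the paper and is open; rationality of the nef and effective cones gives rationality of $\tau$ and $\mu$, not their equality under a threshold hypothesis. Moreover, for a general lattice polytope $X_P$ need not be Gorenstein or even $\QQ$-Gorenstein, so $K_{X_P}+s\el_P$ need not be $\QQ$-Cartier and the nef-value morphism, Kawamata's fiber-dimension bound and the Kobayashi--Ochiai theorem used in Step 3 are unavailable; this is why \cite{HNP09} and \cite{DRHNP13} work directly with the polytope (lattice projections and adjoint polytopes $\adP{s}$) rather than with $\phi_\tau$. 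Note also that the general conjecture only asks for a Cayley decomposition, not a linear $\pr^t$-bundle structure, so forcing $(F,\el_P|_F)=(\pr^{\dim F},{\mathcal O}(1))$ is both too strong and unattainable outside the smooth setting. In short, your plan at best re-derives the smooth case already covered by Theorem \ref{conj}; and contrary to your closing remark, the statement as given is not open --- what remains open is the sharp bound $f(n)=(n+2)/2$ of Conjecture \ref{B}.
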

The above conjecture was proven by Haase, Nill and Payne in \cite{HNP09}. They showed  that $f(n)$ is at most quadratic in $n.$ 
It is important to observe that, as interior lattice points of $tP$ correspond to global sections of $K_X+t\el$ for the associated toric embedding, $\cd(P)$ can be considered as  the {\it integral variant} of $\mu(\el).$
This  observation, techniques from  toric Mori theory and adjunction theory led   to  prove a stronger version of Conjectures \ref{1} and \ref{2}  for smooth polytopes giving yet another characterization of Cayley sums. 
\begin{theorem}\cite{DDRP09, DN10}\label{conj} Let $P\subset\RR^n$ be a smooth $n$-dimensional  polytope.  Then the following statements are equivalent.
\begin{enumerate}
\item $\cd(P)\geq (n+3)/2.$
\item $P$ is affinely isomorphic to a Cayley sum $\Cayley(R_0,\ldots,R_t)_{\pi,Y}$ where $t+1=\cd(P)$ with $t>\frac{n}{2}.$
\item $\mu(\el_P)=\tau(\el_P)=t+1$ and $t>\frac{n}{2}.$
\item $(X_P, \el_P)=(\pr(L_0\oplus\cdots\oplus L_t),\xi)$ for ample line bundles $L_i$ on a non singular toric variety $Y.$
\end{enumerate}
 \end{theorem}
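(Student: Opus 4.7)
The plan is to prove the cycle $(2) \Leftrightarrow (4)$, $(2) \Leftrightarrow (3)$, $(2) \Rightarrow (1)$, $(1) \Rightarrow (2)$. The bookkeeping implications fall out of the material already developed: $(2) \Leftrightarrow (4)$ is the combinatorial dictionary from Section \ref{Lec1}, since a Cayley sum $\Cayley(R_0, \ldots, R_t)_{\pi, Y}$ with $\pi(P) = \Delta_t$ is by definition the polytope of the tautological polarization on $\pr(L_0 \oplus \cdots \oplus L_t) \to Y$ for ample $L_i$ on $Y$; the implication $(2) \Rightarrow (3)$ is Lemma \ref{Cayley}; and $(2) \Rightarrow (1)$ is the integer arithmetic $\cd(P) = t+1$ with $t > n/2$, which gives $t+1 \geq (n+3)/2$ in both parities of $n$.

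Next I would handle $(3) \Rightarrow (2)$, which will also be the endgame of the main implication. Under (3), Remark \ref{remark} says $K_{X_P} + \tau \el_P$ is nef but not big, so the nef-value morphism $\phi_\tau \colon X_P \to Y$ is a genuine toric fibration onto a normal variety of dimension less than $n$. Writing $K_{X_P} + \tau \el_P = \phi_\tau^*(M)$ and restricting to a general fiber $F$ gives $K_F + (t+1)\, \el_P|_F \equiv 0$ with $\el_P|_F$ ample. Kawamata's rationality inequality, with $r = 1$ and $u/v = \tau = t+1$, yields $\dim F \geq t$; the Kobayashi--Ochiai bound $\tau(\el_P|_F) \leq \dim F + 1$ gives the reverse inequality $\dim F \leq t$, so $\dim F = t$. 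Kobayashi--Ochiai then forces $(F, \el_P|_F) \cong (\pr^t, \mathcal{O}_{\pr^t}(1))$, so $\phi_\tau$ is a linear $\pr^t$-bundle. Since both $X_P$ and $Y$ are smooth toric, the splitting criterion of Example \ref{projectivebundle} gives $X_P = \pr(L_0 \oplus \cdots \oplus L_t)$ with ample $L_i$ on a smooth toric $Y$, and the associated polytope is the Cayley sum of (2).

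The substantive implication is $(1) \Rightarrow (2)$, which I would factor as the verification of $(3)$ from $(1)$. First, the toric correspondence between interior lattice points of $sP$ and sections of $K_{X_P} + s \el_P$ gives $\mu(\el_P) = \cd(P)$, and the fact that toric nef line bundles are globally generated (hence effective) gives $\mu(\el_P) \leq \tau(\el_P)$; combined with (1), these yield $\tau(\el_P) \geq \mu(\el_P) = \cd(P) \geq (n+3)/2$, and the task is to upgrade this to the equality $\tau(\el_P) = \mu(\el_P)$, with common value $t+1$ satisfying $t > n/2$. Once that equality is proven, $(3)$ holds and the previous paragraph finishes the proof.

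The main obstacle, and the heart of the theorem, is precisely this final equality $\tau(\el_P) = \mu(\el_P)$: the smooth toric case of the Beltrametti--Sommese--Wisniewski conjecture. If $\tau > \mu$, then $K_{X_P} + \tau \el_P$ is nef and big, so the nef-value morphism is a birational extremal contraction, necessarily divisorial on a smooth toric variety; Kawamata's inequality then forces at least one fiber of $\phi_\tau$ to have dimension $\geq (n+1)/2$. The plan is to combine this lower bound with the combinatorial classification of divisorial extremal contractions on smooth projective toric varieties (Reid's analysis of extremal rays, as sharpened by Casagrande and by Di Rocco--Nill), in which each fiber is a weighted projective space whose dimension is controlled by the combinatorics of the contracted wall of the fan. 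The interaction between the forced large fiber and the codimension-one image of the contracted divisor produces the contradiction. This Mori-theoretic step, carried out in \cite{DDRP09, DN10} for the smooth case, is the only substantive content; the remaining implications are formal.
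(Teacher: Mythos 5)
Your reduction of the formal implications is fine, but the two steps you claim to actually carry out both have problems. In $(3)\Rightarrow(2)$ the inequality you attribute to Kobayashi--Ochiai is backwards: from $K_F+(t+1)\,\el_P|_F\equiv 0$ on a general fiber one gets $-K_F=(t+1)\,\el_P|_F$ with $\el_P|_F$ ample, and Kobayashi--Ochiai (like Kawamata's inequality) then gives $t+1\leq \dim F+1$, i.e.\ $\dim F\geq t$ --- the \emph{same} bound, not the reverse one $\dim F\leq t$. Nothing in your argument excludes a general fiber of dimension larger than $t$ whose anticanonical is divisible by $t+1$; note that for $(\pr^t\times\pr^t,\mathcal{O}(1,1))$ one has $\tau=\mu=t+1$ and the nef-value morphism contracts everything, so the upper bound on $\dim F$ cannot be formal and must use $t>\frac{n}{2}$, which your step never invokes. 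This is exactly where the substance lies: one needs the general fiber to be a projective space (Picard number one), which the paper obtains in Section \ref{final} from the integrality of $\tau$ (the contracted extremal curves are lines), \cite[Cor.~2.5]{BSW92} (the nef-value morphism is an elementary contraction) and Reid's toric analysis; only then does the numerology $0=-\dim F-1+a\tau$ together with $\tau>\frac{n}{2}+1$ force $a=1$ and $\dim F=t$. (Alternatively one could quote Wi\'sniewski's theorem on Fano index versus Picard number, but some such input is indispensable.)

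For $(1)\Rightarrow(3)$ your starting identity $\mu(\el_P)=\cd(P)$ is false in general: interior lattice points only give $\mu(P)\leq\cd(P)$, and the two can differ (cf.\ Figure \ref{fig:pyramid}); the usable toric statement is $\tau(\el_P)>\cd(P)-1$, because nef line bundles on toric varieties are globally generated, which is how the paper argues in Theorem \ref{final theorem}. More importantly, the heart of the implication --- that the codegree bound alone forces $\tau=\mu$ (equivalently that the nef-value morphism is of fiber type) --- is only sketched, and the sketch rests on the unjustified claim that a birational nef-value contraction of a smooth toric variety is necessarily divisorial and on an unspecified ``interaction'' producing a contradiction; in effect you defer this to \cite{DDRP09,DN10}. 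The paper handles this differently: its Section \ref{final} proof never derives $\tau=\mu$ from the codegree alone, but reaches the Cayley/projective-bundle structure through the dual-defect characterization, using the covering family of linear spaces, Ein's normal-bundle computation and \cite{BSW92}. As it stands, then, the proposal contains a concretely wrong step in the one implication it claims to complete, and leaves the genuinely hard implication unproved.
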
  
Notice that  Theorem \ref{conj} proves the reverse statement of Lemma \ref{Cayley}.

Conjectures \ref{1} and \ref{2}, made independently in two apparently unrelated fields, constitute a beautiful example of the interplay between classical projective (toric) geometry and convex geometry.
In view of the results above one could hope that in the toric  setting the conjectures should hold in more generally. 

\begin{conjecture}\label{A}
Let $(X, \el)$ be an $n$-dimensional toric polarized variety  (not necessarily  smooth or even Gorenstein), then
$\mu(\el)>(n+1)/2$ implies that $\mu(\el)=\tau(\el).$
\end{conjecture}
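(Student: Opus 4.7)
The plan is to mirror the three-step strategy behind Theorem \ref{conj}, extending it to arbitrary lattice polytopes. Writing $P = P_{(X,\el)}$ for the lattice polytope of $(X,\el)$, interior lattice points of $sP$ correspond to global sections of $K_X + s\el$, so the invariant $\mu(\el)$ equals
\[
\mu(P) := \inf\{s \in \QQ_{>0}\ :\ \mathrm{int}(sP)\cap M \neq \emptyset\},
\]
and $\tau(\el)$ equals the analogous rational nef threshold of $P$; the hypothesis reads $\mu(P) > (n+1)/2$ and the conclusion asks that this coincides with $\tau(P)$. The conjecture is therefore a purely combinatorial statement about $P$, with no smoothness or Gorenstein assumption on $X$ needed in the setup.

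The core task is to extend the implication $(1)\Rightarrow(2)$ of Theorem \ref{conj} beyond the smooth case: to show that $\mu(P) > (n+1)/2$ forces $P$ to be affinely isomorphic to a Cayley sum $\Cayley(R_0,\dotsc,R_t)_{\pi,Y}$ with $t+1 \geq \mu(P)$ and $t > n/2$. My proposed route is first to reduce to the simple case by passing to a simplicial refinement of $\Sigma_X$ \emph{on the same set of rays}: this produces a small toric birational morphism $\varphi \colon X' \to X$ with $X'$ $\QQ$-factorial. Smallness gives $\varphi^* K_X = K_{X'}$ and $H^0(K_X + s\el) = H^0(K_{X'} + s\varphi^*\el)$ for every $s \in \QQ_{>0}$, so $\mu$ is preserved and the polytope is unchanged. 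One then applies the toric Mori-theoretic arguments of \cite{DDRP09, DN10}, which require $\QQ$-factoriality rather than full smoothness, together with the combinatorial ideas of \cite{HNP09}, to produce the Cayley structure at the level of $X'$. Because $\Sigma_{X'}$ and $\Sigma_X$ share the same rays, the primitive sublattice $\Delta \subset N$ witnessing the splitting of $\Sigma_{X'}$ also induces a splitting of $\Sigma_X$, so the Cayley decomposition descends to $P$.

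Once the Cayley structure is in hand, Lemma \ref{Cayley} yields $\mu(\el) = \tau(\el) = t+1$ directly. Its proof invokes only the toric nefness criterion and Musta\c{t}\u{a}'s toric Fujita theorem \cite{MU02}, both of which hold for arbitrary toric varieties, so no smoothness is needed at this final stage.

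I expect the principal obstacle to lie in the middle step. In \cite{DDRP09} the smooth Cayley characterization rests on a detailed classification of extremal contractions whose fibers are linearly embedded projective spaces; in the merely $\QQ$-factorial setting the fibers can carry terminal singularities and the classification becomes considerably more delicate. A plausible workaround is to bypass the geometric fiber analysis and instead sharpen the Haase--Nill--Payne bound $\cd(P) \gg n$ to the linear bound $\mu(P) > (n+1)/2$ by a purely combinatorial argument, using Nill-style length estimates on the Minkowski-indecomposable pieces of $P$. In either case, establishing the polytopal Batyrev--Nill conjecture without a smoothness hypothesis is where the bulk of the work lies.
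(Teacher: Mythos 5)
The statement you are proving is Conjecture \ref{A} of the paper: it is left \emph{open} there, with only the smooth case settled (Theorem \ref{conj}) and partial cases of the companion Conjecture \ref{B} known from \cite{DRHNP13}. So there is no proof in the paper to compare against, and your proposal must stand on its own; as written it does not, for two concrete reasons beyond the gap you yourself flag. First, a definitional slip: in the non-Gorenstein setting the paper defines $\mu(\el)$ via the $\QQ$-codegree $\mu(P)$, i.e.\ the threshold at which the adjoint polytope $\adP{s}=\{x: Ax\geq b+s\mathbf{1}\}$ becomes empty, \emph{not} via interior lattice points of $sP$ --- that is the integral codegree $\cd(P)$, and one only has $\mu(P)\leq\cd(P)$, with strict inequality in general. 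Your translation of the hypothesis $\mu(\el)>(n+1)/2$ into a statement about lattice points in dilates is therefore already the wrong statement. Likewise, if $K_X$ is not $\QQ$-Cartier the nef threshold is only defined polyhedrally via $\tau(P)$ (the threshold at which the normal fan of $\adP{s}$ changes), so the passage to a small $\QQ$-factorial modification $X'\to X$ does not by itself put you back in a setting where the Mori-theoretic machinery of \cite{DDRP09} applies: that argument uses smoothness essentially (fibers of the extremal contraction are honest linearly embedded projective spaces, Ein's normal bundle computation, and \cite{BSW92}), whereas for merely $\QQ$-factorial $X'$ the fibers are fake weighted projective spaces, $\tau$ need not be an integer, and the identity $\tau=(t+1)/a$ breaks down.

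Second, and more fatally, your final step does not follow even if the middle step were carried out. Lemma \ref{Cayley} computes $\tau=\mu=t+1$ only for Cayley sums $\Cayley(R_0,\ldots,R_t)_{(\pi,Y)}$ with \emph{normally equivalent} summands and $\pi(P)$ a unimodular simplex, i.e.\ exactly when $X_P=\pr(L_0\oplus\cdots\oplus L_t)$ with ample $L_i$; its proof uses the projective-bundle formula for $K_{X_P}$, the description of invariant curves, and the splitting of $\Pic$. The combinatorial routes you invoke (\cite{HNP09}, \cite{DRHNP13}, Nill-style length estimates) only deliver a general Cayley sum $[R_0\star\cdots\star R_t]_\pi$, for which the associated $\xi$ is not ample and no equality $\mu=\tau$ follows --- this is precisely why the paper can state Conjecture \ref{B} (existence of a Cayley decomposition) and Conjecture \ref{A} ($\mu=\tau$) as two separate open problems, and why \cite{DRHNP13} proves instances of the former without obtaining the latter. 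Together with your own admission that the extension of the Cayley characterization beyond the smooth case is ``where the bulk of the work lies,'' the proposal is a plausible research plan but not a proof: the hypothesis is mistranslated, the reduction does not make the smooth-case arguments available, and the concluding implication ``Cayley $\Rightarrow\ \mu=\tau$'' is unsupported in the stated generality.
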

The invariants   $\mu(\el), \tau(\el)$ in the non Gorenstein case can be defined using corresponding invariants, $\mu(P), \tau(P)$ of the associated polytope, see below for a definition.
\begin{conjecture}\label{B}
  If an $n$-dimensional lattice polytope $P$ satisfies $\cd(P) >
  (n+2)/2$, then it decomposes as a Cayley sum of lattice
  polytopes of dimension at most $2 (n+1-\cd(P))$.
\end{conjecture}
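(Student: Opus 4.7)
The plan is to prove the polytope decomposition by first establishing the polytope-theoretic analogue of Conjecture \ref{A}, namely that $\cd(P) > (n+2)/2$ forces the nef-value morphism of $(X_P, \el_P)$ to be a fibration, and then reading off the Cayley decomposition from this fibration using the characterization of polarized toric fibrations by Cayley sums proved in \cite{CDR08}. Throughout, one should work equivariantly and pass freely between $P$ and $X_P$, using $\mu(P), \tau(P)$ as the polytope-level avatars of $\mu(\el_P), \tau(\el_P)$ so that the Gorenstein hypothesis is not needed.

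\textbf{Step 1.} Since interior lattice points of $sP$ correspond to global sections of $K_{X_P}+s\el_P$ on the associated toric variety, one has $\mu(P) \geq \cd(P)$ (this is the same inequality used inside the proof of Lemma \ref{Cayley}). Hence the hypothesis $\cd(P) > (n+2)/2$ gives $\mu(P) > (n+1)/2$, so that the hypothesis of Conjecture \ref{A} is automatic.

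\textbf{Step 2, the heart of the matter.} Prove the toric version of Conjecture \ref{A}: $\mu(P) > (n+1)/2 \Rightarrow \mu(P) = \tau(P)$. Referring to Figure \ref{nef-cone}, one must rule out the case in which $\el_P + (1/\mu) K_{X_P}$ is big but not nef. If it is not nef, a toric minimal model program in the direction of $K_{X_P}$ produces an extremal contraction $\varphi \colon X_P \to X'$ coming from a primitive collection in $\Sigma_{X_P}$. The toric length-of-extremal-ray bound (in the form due to Fujino--Sato, or obtained directly from primitive relations) gives $\ell(R) \leq d+1$, where $d$ is the dimension of the exceptional locus. Combined with $\mu(P) > (n+1)/2$ this forces $d > n/2$, which excludes both divisorial contractions (where $d = n-1$ is allowed only for small $\mu$) and small contractions/flips; only fibre-type contractions survive, and any such contraction is exactly the nef-value morphism witnessing $\mu(P) = \tau(P)$. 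Importantly, the toric MMP runs without regularity hypotheses on $X_P$, which is what makes the non-smooth case accessible.

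\textbf{Step 3.} Once $\tau(P) = \mu(P) = t+1$ is known, the nef-value morphism $\phi_\tau \colon X_P \to Y$ is a toric fibration polarized by $\el_P$, and the proposition recalled in Section \ref{Lec1} expresses $P$ as a Cayley sum associated with $\phi_\tau$. The generic fibre has dimension $n - \dim Y$, and since interior lattice points of $s\pi(P)$ pull back to interior lattice points of $sP$ one obtains $\cd(\pi(P)) \geq \cd(P) = t+1$, forcing $\dim \pi(P) \geq t$ and hence $\dim Y \leq n - t = n+1 - \cd(P)$; the polytopes $R_i$, which live parallel to $\ker \pi_\RR$, are then bounded by $n+1-\cd(P)$ in the cleanest situation.

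\textbf{Expected main obstacle.} The bound stated in Conjecture \ref{B} allows $\dim R_i$ up to $2(n+1-\cd(P))$, not $n+1-\cd(P)$, and absorbing this factor of two is where non-smoothness genuinely intrudes. In the smooth/Gorenstein case (Theorem \ref{conj}) the nef-value morphism is a projective bundle and the $R_i$ are normally equivalent, pinning their dimension to $n-t$ exactly. In general $K_{X_P}$ need not be $\QQ$-Cartier, and one must pass through a small $\QQ$-factorialization or a partial resolution before running the MMP of Step 2; this modification can blur the combinatorial decomposition, forcing one to work with the weaker structure $[R_0 \star \cdots \star R_t]_\pi$ of the Historical Remark rather than with normally equivalent pieces. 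Bounding the dimension of the reconstructed $R_i$ by $2(n+1-\cd(P))$ will require tracking the extremal rays contracted during the $\QQ$-factorialization and showing that each one contributes at most one extra dimension to the ambient space of the base polytope -- this quantitative tracking is, I expect, the essential difficulty, and the factor $2$ in the conjecture is a direct echo of it.
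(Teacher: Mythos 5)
This statement is presented in the paper as an open conjecture (Conjecture \ref{B}); the paper offers no proof of it, only the $\QQ$-codegree analogue and two special cases established in \cite{DRHNP13} (namely when $\up{\mu(P)}=\cd(P)$, and when the normal fan is Gorenstein and $\mu(P)=\tau(P)$). So your proposal should be judged as a new argument, and it has a decisive gap at its very first step. You claim $\mu(P)\geq\cd(P)$, but the inequality goes the other way: the paper states $\mu(P)\leq\cd(P)\leq n+1$, and this is what the correspondence between interior lattice points and adjoint data actually gives. Concretely, if $cP$ has an interior lattice point then (with primitive integral facet normals and integral right-hand sides) that point satisfies $Ax\geq cb+\mathbf{1}$, so $\adP{1/c}\neq\emptyset$ and hence $\mu(P)\leq c=\cd(P)$; in Lemma \ref{Cayley} the same observation is used precisely in the form $\mu(\el)\leq\cd(P)$. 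Consequently $\cd(P)>(n+2)/2$ gives no lower bound on $\mu(P)$, and Step 1 does not place you in the hypotheses of Conjecture \ref{A} or of the $\QQ$-codegree theorem of \cite{DRHNP13}. This is not a repairable slip of notation: the entire difficulty of Conjecture \ref{B}, as opposed to its $\QQ$-codegree analogue, is exactly that the integral codegree can exceed the $\QQ$-codegree, and if your Step 1 were true the conjecture would be an immediate corollary of \cite{DRHNP13}, contradicting the paper's explicit remark that only the special cases above are known.

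Two further points. In Step 3 you again invert a direction: a surjective lattice projection sends interior lattice points of $sP$ to interior lattice points of $s\pi(P)$, which yields $\cd(\pi(P))\leq\cd(P)$; interior lattice points of $s\pi(P)$ need not lift to interior lattice points of $sP$, so the bound $\dim\pi(P)\geq t$ does not follow as claimed. And Step 2 is essentially a restatement of the (open) toric Conjecture \ref{A}: the sketch via lengths of extremal rays does not engage the actual difficulty, which is relating the effective threshold $\mu$ (a statement about the boundary of ${\rm Eff}(X)$) to the nef threshold $\tau$ across the birational modifications of a toric MMP, especially without $\QQ$-Gorenstein hypotheses. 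Even granting Step 2, the argument would still collapse at Step 1.
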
 
Conjecture \ref{A} is a toric version of Conjecture \ref{1}, extending the statement to possibly singular and non Gorenstein varieties. Conjecture \ref{B} states that the  function $f(n)$ in Conjecture \ref{2}   should be equal to $(n+2)/2.$
An important step to prove these conjectures is to define the convex analog of $\mu(\el_P).$ 

Let $P \subseteq \RR^n$ be a rational polytope of dimension $n$.
Any such polytope $P$ can be described in a unique minimal way as
\[P = \{x \in \RR^n \,:\,  \pro{a_i}{x} \geq b_i,\; i=1, \ldots, m\}\]
where the $a_i$ are the rows of an $m \times n$ integer matrix $A$, and $b \in \QQ^m$.

For any $s \geq 0$ we define the {\em adjoint polytope} $\adP{s}$ as
\[\adP{s} := \{x \in \RR^n \,:\, A x \geq b+s{\bf 1}\},\]
where ${\bf 1 }= (1, \ldots, 1)^\transpose$. 

We call the study of such polytopes $\adP{s}$ {\em polyhedral adjunction theory}. 
\begin{figure}[ht]
\includegraphics[height=2cm]
{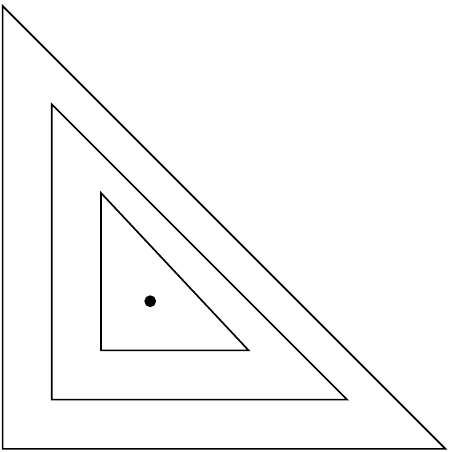}
\hspace{2cm}
\includegraphics[height=2cm]
{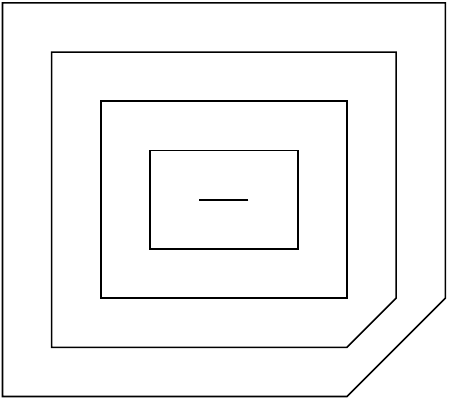}
\caption{Two examples of polyhedral adjunction}
\label{fig:start}
\end{figure}

\begin{definition}\label{def:setting}
We define the {\em $\QQ$-codegree} of $P$ as 
\[\mu(P) := (\sup\{s > 0 \,:\, \ad{P}{s} \not=\emptyset\})^{-1},\]
and the {\em core} of $P$ to be $\core(P) := \adP{1/\mu(P)}$.
\end{definition}

Notice that  in this case the supremum is actually a maximum. Moreover, 
since $P$ is a rational polytope, $\mu(P)$ is a positive rational number.

One sees that  for a \emph{lattice} polytope $P$
\[\mu(P) \leq \cd(P) \leq n+1\]

\begin{definition} 
  The {\em nef value} of $P$ is given as
  \[\nf(P) := (\sup\{s > 0 \,:\, \NF(\ad{P}{s}) = \NF(P)\})^{-1} \in
  \RR_{>0} \cup \{\infty\}\]
  where $\NF(P)$ denotes the normal fan of the polytope $P.$
\end{definition}

Note that in contrast to the definition of the $\QQ$-codegree, here 
the supremum is never a maximum. 

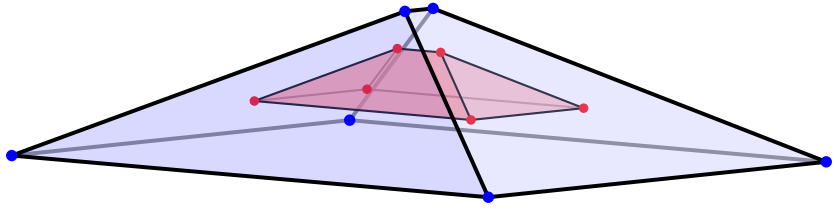
\begin{figure}[ht]
  \centering
    \begin{tikzpicture}[perspective adjusted,scale=.18]
      
    \coordinate (a) at (  0,   1,   1);%
    \coordinate (b) at (  0,   1,  -1);%
    \coordinate (c) at ( 22, -10,  12);%
    \coordinate (d) at ( 22, -10, -12);%
    \coordinate (e) at (-22, -10, -12);%
    \coordinate (f) at (-22, -10,  12);%

    \coordinate (g) at ( -2, -2,  0);%
    \coordinate (h) at (  2, -2,  0);%
    \coordinate (i) at ( 10, -6, -4);%
    \coordinate (j) at ( 10, -6,  4);%
    \coordinate (k) at (-10, -6, -4);%
    \coordinate (l) at (-10, -6,  4);%

    \draw [line width=1.5pt,color=gray] (d) -- (e);%
    \draw [line width=1.5pt,color=gray] (b) -- (e);%
    \draw [line width=1.5pt,color=gray] (f) -- (e);%

    \draw [thick,color=black!70] (g) -- (k);%
    \draw [thick,color=black!70] (i) -- (k);%
    \draw [thick,color=black!70] (k) -- (l);%

    \fill [fill=red!50,opacity=.7] (g) -- (h) -- (j) -- (l);
    \fill [fill=red!30,opacity=.7] (h) -- (i) -- (j);

    \draw [thick] (g) -- (h);%
    \draw [thick] (h) -- (i);%
    \draw [thick] (h) -- (j);%
    \draw [thick] (i) -- (j);%
    \draw [thick] (g) -- (l);%
    \draw [thick] (j) -- (l);%
    \fill (g) [red] circle (10pt);%
    \fill (h) [red] circle (10pt);%
    \fill (i) [red] circle (10pt);%
    \fill (j) [red] circle (10pt);%
    \fill (k) [red] circle (10pt);%
    \fill (l) [red] circle (10pt);%

    \fill [fill=blue!50,opacity=.3] (a) -- (c) -- (f);
    \fill [fill=blue!30,opacity=.3] (a) -- (b) -- (d) -- (c);

    \draw [line width=1.5pt] (a) -- (b);%
    \draw [line width=1.5pt] (a) -- (c);%
    \draw [line width=1.5pt] (b) -- (d);%
    \draw [line width=1.5pt] (a) -- (f);%
    \draw [line width=1.5pt] (c) -- (f);%
    \draw [line width=1.5pt] (d) -- (c);%
    \fill (a) [blue] circle (12pt);%
    \fill (b) [blue] circle (12pt);%
    \fill (c) [blue] circle (12pt);%
    \fill (d) [blue] circle (12pt);%
    \fill (e) [blue] circle (12pt);%
    \fill (f) [blue] circle (12pt);%
  \end{tikzpicture}
  \caption{$\adP 4 \subseteq P$ for a $3$-dimensional lattice polytope $P$}
  \label{fig:pyramid}
\end{figure}
Fig. 4 illustrates a polytope $P$ with $\nf(P)^{-1} = 2$ and $\mu(P)^{-1} = 6.$ In this case $\core(P)$ is an interval.

 In   \cite{DRHNP13}   the precise analogue of Conjecture \ref{B} for the $\QQ$-codegree is proven.  
\begin{theorem}\cite{DRHNP13} Let $P$ be an $n$-dimensional lattice polytope. If $n$ is odd and
  $\mu(P) > (n+1)/2$, or if $n$ is even and $\mu(P) \geq
 (n+1)/2$, then $P$ is a Cayley polytope.\end{theorem}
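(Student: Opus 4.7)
The plan is to analyze the structure of the core $C := \core(P) = \adP{1/\mu(P)}$ and to extract a Cayley projection directly from it. The $\QQ$-codegree $\mu(P)$ governs the decreasing family of adjoint polytopes $\adP{s}$, and the hypothesis should force the core to become very low-dimensional at the critical parameter $s_{*} = 1/\mu(P)$.

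The first step is a dimension bound for the core, of the form $\dim(C) \leq n + 1 - 2\mu(P)$, interpreted with care in light of the rationality of $\mu(P)$. Under the hypothesis this forces $\dim(C) \leq 0$ in the odd case (so $C$ is a single point) and $\dim(C) \leq 1$ in the even case. The idea is to track the polynomial behaviour of $\Vol(\adP{s})$ in $s$ and use the fact that at $s = s_*$ a prescribed collection of facet inequalities of $P$ must become simultaneously tight; the odd/even split in the hypothesis reflects whether the half-integer value $(n+1)/2$ is attainable as $\mu(P)$ at all, and hence whether strict inequality is required.

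Next, I would construct the Cayley projection from $C$. Let $A$ be the affine hull of $C$ and let $\pi \colon M \to M/M_A$ be the primitive lattice projection killing the direction of $A$. Then $\pi(P)$ is a lattice polytope of dimension $n - \dim(C)$, hence at least $n$ or $n-1$. The facets of $P$ that support $C$ project under $\pi$ to hyperplanes bounding $\pi(P)$, and the goal is to show that these present $\pi(P)$ as a lattice simplex $\conv(v_0, \ldots, v_t)$. Once this is done, setting $R_i := \pi^{-1}(v_i) \cap P$ realises each $R_i$ as a genuine face of $P$ with $\pi(R_i) = v_i$, and $P = \conv(R_0, \ldots, R_t) = [R_0 \star \cdots \star R_t]_\pi$ is the desired Cayley decomposition in the sense of Definition \ref{defCayley}.

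The hard part will be the simplex identification in the previous step. Producing a lattice projection to a lower-dimensional polytope does not suffice; the image must be a simplex, and the fibres over its vertices must be honest faces of $P$. This requires matching the combinatorics of the facets that remain active at the core with the facet structure of $\pi(P)$, and it is precisely here that the threshold $(n+1)/2$ and the odd/even distinction enter. The dimension bound in step one is similarly delicate in the even case, where equality in the hypothesis must still be enough to drive the core into the correct regime.
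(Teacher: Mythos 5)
These notes do not actually prove the quoted theorem (it is stated with a citation to \cite{DRHNP13}), so your outline can only be judged on its own terms; its opening instinct --- project along the affine hull of $\core(P)$ --- is indeed the right first move, but the quantitative step on which everything else rests is false. The bound $\dim(\core(P))\leq n+1-2\mu(P)$ is already absurd for $P=\Delta_n$ (there $\mu=n+1$ and the core is a point, while the right-hand side is $-(n+1)$), and no repaired version can force the core to be a point or a segment under your hypothesis. Concretely, take $P=\Delta_t\times N\,[0,1]^m$ with $N\geq 1$: since $P^{(s)}$ is the product of the adjoints of the factors, $\mu(P)=t+1$ and $\core(P)=\{\text{pt}\}\times[\tfrac{1}{t+1},N-\tfrac{1}{t+1}]^m$ has dimension $m$. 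Choosing $t=m$ ($n=2m$ even) or $t=m+1$ ($n=2m+1$ odd) satisfies the hypothesis, yet $\dim(\core(P))=m\approx n/2$ and the projection along the core has image $\Delta_t$ of dimension about $n/2$, not $n-1$ or $n$. This is consistent with what Conjecture B and \cite{DRHNP13} predict: the Cayley factors (which live in the core direction) may have dimension up to $2(n+1-\mu(P))$, which near the threshold $\mu\approx(n+1)/2$ is about $n+1$ --- nowhere near $\leq 1$.

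Because of this, the structure you aim for in the later steps is strictly stronger than the theorem and is false: in the product example above no Cayley decomposition can have an $(n-1)$- or $n$-dimensional simplex as base with fibres of dimension $\leq 1$ (the convex hull of $n+1$ such pieces has at most $2(n+1)$ vertices, while $P$ has $(t+1)2^m$), even though $P$ is of course a Cayley polytope, namely $t+1$ copies of the cube over $\Delta_t$. Two further points are not addressed and are essential. First, exhibiting a lattice \emph{simplex} image with faces over its vertices is not enough: $2\Delta_n$ projects onto the lattice simplex $[0,2]$ with faces over the endpoints, has $\mu=(n+1)/2$, yet has lattice width $2$ and is not a Cayley sum in the sense required here; this borderline example is also where the threshold and the strict/non-strict inequality genuinely matter, so your parity heuristic (``whether $(n+1)/2$ is attainable as $\mu$'') is not the mechanism. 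Second, the actual engine in \cite{DRHNP13} is arithmetic rather than a dimension count: after passing to the projection along the core one works with the facet inequalities that become tight on $\core(P)$, extracts a positive dependence among their (primitive, integral) normals, and uses the size of $\mu$ together with integrality of $\langle a_i,m\rangle-b_i$ at lattice points $m\in P$ to force every lattice point onto one of a few such facets; it is this step that creates the Cayley base. Your outline contains no ingredient playing that role --- the ``hard part'' you flag at the end is, in effect, the entire proof.
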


  Results from \cite{DRHNP13}  show Conjecture \ref{B}  in two interesting cases: when $\up{\mu(P)} = \cd(P)$ and when the normal fan of $P$ is Gorenstein and  $\mu(P)=\tau(P).$  
  
  \section{Connecting the three characterizations }\label{final}
  
   In Section \ref{Lec2} we have seen that a certain class of Cayley polytopes characterizes dually defective configuration points. Moreover this class corresponds to the polytopes achieving the equality in Corollary \ref{eqchern}. In Section \ref{Lec3} the same class of Cayley polytopes was  characterized as corresponding to smooth configurations with codegree larger than slightly more that half the dimension. We will here assemble the three characterizations  and provide  proofs in the non singular case.
  
  \begin{theorem}\label{final theorem}
  Let $\A\subset\ZZ^n$ be a point configuration such that $P_\A\cap \ZZ^n=\A,$ $\dim(P_\A)=n$ and such that $P_\A$ is a smooth polytope. Then the following statements are equivalent.
\begin{enumerate}
\item $P_\A$ is affinely isomorphic to a Cayley sum $\Cayley(R_0,\ldots,R_t)_{\pi,Y}$ where $t+1=\cd(P_\A)$ and $t>\frac{n}{2}.$
\item $\cd(P_\A)>\frac{n+1}{2}+1$ and $\tau(P_\A)=\mu(P_\A).$
\item The discriminant $\Delta_\A=1.$
\item $\sum_{\emptyset\neq F\prec P_\A}(\dim(F)+1)!(-1)^{\cod(F)}\Vol(F)= 0$
\end{enumerate}
  \end{theorem}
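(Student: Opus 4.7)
The plan is to treat this theorem as an assembly statement, routing each of the remaining equivalences through condition (1), since each of (1) $\Leftrightarrow$ (2), (1) $\Leftrightarrow$ (3), and (3) $\Leftrightarrow$ (4) essentially corresponds to a characterization already proved earlier in the excerpt. Essentially no new argument is needed beyond verifying that the parameters $t$ appearing in the three Cayley descriptions line up consistently.

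For (1) $\Leftrightarrow$ (3), I would invoke the smooth case of Theorem \ref{characterization1}. By the convention in Section \ref{Lec2}, $\Delta_\A = 1$ means the embedding $X_\A \hookrightarrow \pr^{|\A|-1}$ is dually defective, i.e.\ $\cod(X_\A^\vee) \geq 2$, and Theorem \ref{characterization1} translates this (in the smooth case) into the Cayley structure with $\pi(P_\A)$ a unimodular simplex and $t > n/2$. For (3) $\Leftrightarrow$ (4), I would compose Lemma \ref{chern1} with Proposition \ref{chern2}: smoothness of $P_\A$ together with $\A = P_\A \cap \ZZ^n$ makes $\el_\A$ very ample, hence in particular $1$-jet spanned, so Lemma \ref{chern1} gives $\cod(X_\A^\vee) > 1 \Leftrightarrow c_n(J_1(\el_\A)) = 0$, and Proposition \ref{chern2} rewrites the right-hand side as exactly the alternating volume sum in (4). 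Finally, (1) $\Leftrightarrow$ (2) is essentially the statement of Theorem \ref{conj}, whose items (1)--(3) realize $\cd(P) \geq (n+3)/2$, the Cayley structure with $t+1 = \cd(P)$ and $t > n/2$, and $\mu(\el_P) = \tau(\el_P) = t+1$ as mutually equivalent for smooth $P$.

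The main care point, and essentially the only thing to verify, is bookkeeping: one must ensure that the Cayley parameter $t$ and the inequalities $t > n/2$, $t+1 = \cd(P)$ produced by each invoked theorem coincide with the conditions of the final statement (in particular, that the condition in (2) here, once normalized, matches $\cd(P) \geq (n+3)/2$ from Theorem \ref{conj}), and confirm that the $1$-jet spanning hypothesis needed for Lemma \ref{chern1} is indeed provided by the combination of smoothness of $P_\A$ and the equality $\A = P_\A \cap \ZZ^n$. No new computation is required; all the genuine depth is encoded in the three characterization theorems being assembled here, each of which rests on substantial convex-geometric, intersection-theoretic and adjunction-theoretic work carried out in Sections \ref{Lec2} and \ref{Lec3}.
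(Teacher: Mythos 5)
Your proposal coincides with the paper only on the equivalence $(c)\Leftrightarrow(d)$, which the paper indeed gets from Lemma \ref{chern1} together with Proposition \ref{chern2}. For the rest, routing through Theorem \ref{characterization1} and Theorem \ref{conj} is circular \emph{within this paper}: both are quoted from the literature, and the text explicitly defers their proof in the smooth case to Section \ref{final} (``The proof in the non singular case will be given in Section \ref{final}''; the introduction likewise announces that Section \ref{final} gives a complete proof of the three characterizations). The theorem you were asked to prove \emph{is} that deferred proof, so it cannot be obtained by citing those statements back. What the paper actually does is prove a cycle directly: $(c)\Rightarrow(b)$ using the linearity of contact loci of a dually defective embedding, Ein's formula for the normal bundle of the linear spaces in the resulting covering family, and the Beltrametti--Sommese--Wisniewski bound on the normal degree, yielding $\tau(\el_\A)=\mu(\el_\A)$ and the codegree estimate; $(b)\Rightarrow(a)$ via toric Mori/adjunction theory, analyzing the extremal ray contracted by the nef-value morphism, showing the contracted curves are lines, that $\phi_\tau$ is a $\pr^t$-bundle with fibers embedded by ${\mathcal O}_{\pr^t}(1)$, hence $(X_\A,\el_\A)=(\pr(L_0\oplus\cdots\oplus L_t),\xi)$; and $(a)\Rightarrow(c)$ by comparing the dual variety of $\pr(E)$ with the duals of the linearly embedded fibers to force $\cod(\pr(E)^\vee)>1$. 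None of this content appears in your proposal.

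Even if one grants the cited external references and accepts an assembly argument, the ``bookkeeping'' you defer is not automatic and is where your proposal has a genuine gap. Theorem \ref{characterization1} does not assert $t+1=\cd(P_\A)$, so matching its Cayley structure with condition (1) requires an extra step (essentially Lemma \ref{Cayley}). More seriously, condition (2) here demands the \emph{strict} inequality $\cd(P_\A)>\frac{n+1}{2}+1=\frac{n+3}{2}$ together with $\tau=\mu$, whereas Theorem \ref{conj} works with the non-strict bound $\cd(P)\geq (n+3)/2$: for odd $n$, a Cayley sum as in (1) with $t=\frac{n+1}{2}$ satisfies the hypotheses of Theorem \ref{conj} yet has $\cd(P_\A)=\frac{n+3}{2}$, so the strict inequality cannot be ``normalized'' out of Theorem \ref{conj}; in the paper strictness is extracted from the $(c)\Rightarrow(b)$ argument, where the codimension of the dual variety being at least two enters. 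So the equivalence $(1)\Leftrightarrow(2)$ cannot simply be read off Theorem \ref{conj} as stated, and your proposal supplies no argument to close this boundary case.
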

\begin{proof}
\noindent [{\bf $(d) \Leftrightarrow (c)$}]. The implication $(d)\leadsto (c)$  follows from Lemma \ref{chern1} and Proposition \ref{chern2}. The reverse implication follows from Corollary \ref{chern}. 

\noindent [$(c)\Rightarrow (b).$] Assume now $(c),$ i.e. assume that the configuration is dually defective. Consider the associated polarized toric manifold $(X_\A,\el_\A).$ It is a classical result that the generic tangent hyperplane is in fact tangent along  a linear space in $X_\A.$ Therefore if $\cod(X_\A^\vee)=k>1$ then there is a linear $\pr^k$ through a general point of $X_\A.$ By linear $\pr^k$ we mean a subspace $Y\cong \pr^k$ such that $L|_Y={\mathcal O}_{\pr^k}(1).$ Moreover, by a result of Ein, \cite{E86},  $N_{\pr^{k}/X}={\mathcal O}_{\pr^k}((-n-2-k)/2).$ Observe that if we fix a point $x\in X_\A,$ a sequence $\{F_j\}$ of general linear subspaces $F_J\cong\pr^k$ can be chosen so that $x\in\lim(F_j).$ Since the $F_i$ are all linear the limit space has to be also a linear $\pr^k.$ We can then assume that there is a linear $\pr^k$ through every  point of $X_\A.$ Let $L$ now be an invariant line in one of the $\pr^k$ through a fixed point. Then:
$$[K_X+t\el ]_L={\mathcal O}_{\pr^1}( (-n-2-k)/2+t)$$
which implies $\tau(\el_\A)\geq \frac{n+k}{2}+1.$
Assume now that $\tau(\el_\A)> \frac{n+k}{2}+1$  and let  $L$ be again a line in the family of linear spaces covering $X.$ The quantity $-K_X\cdot L-2=\nu$ is called the normal degree of the family. In our case  $\nu=(n+k)/2 -1>n/2.$  By a result of Beltrametti-Sommese-Wisniewski, \cite{BSW92}, this assumption implies  $\nu=\tau-2,$ proving  $\tau(\el_A)= \frac{n+k}{2}+1.$ 
Notice that the nef-morphism $\phi_\tau$ contracts all the linear $\pr^k$ of the covering family  and thus it is a fibration. As a consequence the line bundle $K_{X_\A}+\tau\el_\A$ is not big and thus $\tau(\el_\A)=\mu(\el_\A).$ The inequality
$$\cd(P_\A)\geq\mu(\el_{\A})=\tau (\el_\A)= \frac{n+k}{2}+1$$ shows the implication $(c)\leadsto (b).$ 

\noindent $[(b)\Rightarrow (a)].$ Assume now $(b)$. The nef-value morphism is then a fibration and $$\tau(\el_\A)> \cd(P_\A)-1>\frac{n}{2}.$$ Notice that the nef-morphism $\phi_\tau$ contracts a face of the 
Mori-cone and thus faces of the lattice polytope  $P_\A,$ i.e. all the invariant curves with $0$-intersection with the line bundle $K_{X_\A}+\tau\el_\A.$  Let now $C$ be a generator of an 
extremal ray contracted by the morphism $\phi_\tau.$ If $\el_\A\cdot C\geq 2$, then $-K_X\cdot C> n+1$ which is impossible. We can  conclude that $C$ is a line and 
$\tau(\el_\A) =-K_{X_\A}\cdot C$ is an integer. It follows that $\tau (\el_A) \geq \frac{n+1}{2}+1.$ This inequality   implies that $\phi_\tau$ is the contraction of one extremal ray, by 
\cite[Cor. 2.5]{BSW92}. These morphisms are analyzed in detail in \cite{Re83}. Because $X_\A$ is smooth and toric and this contraction has connected fibres, the general fiber $F$ of the contraction  is a smooth toric variety with Picard number one. It follows that $F$ is a projective space and thus
 $\phi_\tau$ is a $\pr^t$ bundle. Let  $L|_F={\mathcal O}_{\pr^t}(a).$ Observe that by construction  $K_{X_\A}|_F=K_F.$ Consider a line $l\subset F.$  It follows that 
$$0=(K_{X_\A}+\tau \el_\A)\cdot l=K_F\cdot l+\tau\el_\A\cdot l=-t-1+a\tau$$
and thus  $\tau=\frac{t+1}{a}> \frac{n+1}{2}+1$ which implies $a=1$ and $t> \frac{n+1}{2}.$
Since $a=1$ the fibers are embedded linearly and thus   $(X_\A,\el_\A)=(\pr(L_0\oplus\ldots\oplus L_t),\xi),$ for ample line bundles $L_i$ on a smooth toric variety $Y.$ This proves the implication $(b)\leadsto (a).$

\noindent $[(a)\Rightarrow (c)].$ Assume now $(a)$. Using notation as in (\ref{dualcorrespondence}), consider the commutative diagram:

\begin{small}
\[\xymatrix{ &\alpha^{-1}(\pr(E)^\vee_{reg})\drto^{\alpha}\dlto_{\gamma}&\\
\pr(E)\drto^{\pi}&&\pr(E)^\vee_{reg}\dlto^{f}\\
&Y&
}\]
\end{small}
where $E=L_0\oplus\ldots \oplus L_t$ and $(Y,L_i)$ is the smooth polarized variety associated to the polytope $R_i.$ The commutativity of the diagram and the existence of $f$ follows from \cite[Lemma 1.15]{DeB01}. Let $y\in Y$ and let $F\cong\pr^t\subset \pr^{|\A|-1}$ be the fiber $\pi^{-1}(y).$ Commutativity of the diagram implies that  the contact locus $\gamma(\alpha^{-1}(H))$ is included in $F$ for all $H\in f^{-1}(y)$. Moreover $\osc{}_{F,y}\subset \osc{}_{\pr(E),y}\subset H$ implies that $H$ belongs to the dual variety $F^\vee,$ with contact locus at least of the same dimension. Because the map $f$ is dominant we can conclude that:
$\dim(F^\vee)\geq \dim(\pr(E)^\vee)-\dim(Y),$ which implies 
$$\cod(\pr(E)^\vee)\geq \cod(F^\vee)-\dim(Y).$$ Recall that the fibers $F$ are embedded linearly and thus $\cod(F^\vee)=\dim(F)+1.$ It follows that
$\cod(\pr(E)^\vee)\geq \dim(F)+1 -\dim(Y)>1$ and thus $\Delta_\A=1$. This proves $(a)\leadsto (c).$

\end{proof}

\begin{bibdiv}
\begin{biblist}

\bib{BC94}{article}{
label={BC94},
author={V.V.~Batyrev },
author={D.~Cox},
title={ On the Hodge structures of projective hyper surfaces in toric varieties},
journal={Duke Math.}
  VOLUME = {75},
      YEAR = {1994},
    NUMBER = {},
     PAGES = {293--338},}

\bib{BN08}{article}{
label={BN08},
author={V.V.~Batyrev },
author={B.~Nill},
title={ Integer points in polyhedra},
journal={In Matthias Beck and et. al., editors, {\em Combinatorial aspects of mirror symmetry}, volume 452 of {\em Contemp. Math.}, pp.~35--66. AMS, 2008.},}

\bib{BS94}{article}{
label={BS94},
author={M.C. Beltrametti },
author={A.J.~Sommese},
title={Some effects of the spectral values on reductions.},
journal={{\em Classification of algebraic varieties ({L}'{A}quila, 1992)},
  volume 162 of {\em Contemp. Math.}, pp.~31--48. Amer. Math. Soc.,
  Providence, RI, 1994.},}

\bib{BSW92}{incollection}{
label={BSW92},
    author={M.C. Beltrametti },
author={A.J.~Sommese},
author={J.A. Wisniewski},
     TITLE = {Results on varieties with many lines and their applications to
              adjunction theory},
 BOOKTITLE = {Complex algebraic varieties ({B}ayreuth, 1990)},
    SERIES = {Lecture Notes in Math.},
    VOLUME = {1507},
     PAGES = {16--38},
 PUBLISHER = {Springer},
   ADDRESS = {Berlin},
      YEAR = {1992},
   MRCLASS = {14C20 (14J40)},
  MRNUMBER = {1178717 (93i:14007)},
MRREVIEWER = {Antonio Lanteri},
       DOI = {10.1007/BFb0094508},
       URL = {http://dx.doi.org/10.1007/BFb0094508},
}

\bib{CC07}{article}{
label={CC07},
    author={  E. Cattani},
    AUTHOR = {R. Curran},
     TITLE = {Restriction of {$A$}-discriminants and dual defect toric
              varieties},
   JOURNAL = {J. Symbolic Comput.},
  FJOURNAL = {Journal of Symbolic Computation},
    VOLUME = {42},
      YEAR = {2007},
    NUMBER = {1-2},
     PAGES = {115--135},
      ISSN = {0747-7171},
   MRCLASS = {14M25},
  MRNUMBER = {2284288 (2007j:14076)},
MRREVIEWER = {Joseph P. Rusinko},
       DOI = {10.1016/j.jsc.2006.02.006},
       URL = {http://dx.doi.org/10.1016/j.jsc.2006.02.006},
}

 \bib{Mix12}{article}{
label={CCDDRS},
author={E. Cattani},
author={M.A. Cueto},
author={A. Dickenstein},
author={S. Di Rocco},
author={B. Sturmfels},
 title={Mixed discriminants},
 journal={ Math Z.}
 YEAR = {2013},
    NUMBER = {274},
     PAGES = {761-778},}
 
 \bib{CDR08}{article}{
label={CDR08},
    AUTHOR = {C. Casagrande},
    author={S. Di Rocco},
     TITLE = {Projective {$\Bbb Q$}-factorial toric varieties covered by
              lines},
   JOURNAL = {Commun. Contemp. Math.},
  FJOURNAL = {Communications in Contemporary Mathematics},
    VOLUME = {10},
      YEAR = {2008},
    NUMBER = {3},
     PAGES = {363--389},
}

\bib{DeB01}{book}{
label={DeB01},
    AUTHOR = {O. Debarre},
     TITLE = {Higher-dimensional algebraic geometry},
    SERIES = {Universitext},
 PUBLISHER = {Springer-Verlag},
   ADDRESS = {New York},
      YEAR = {2001},
     PAGES = {xiv+233},
      ISBN = {0-387-95227-6},
   MRCLASS = {14-02 (14E30 14Jxx)},
  MRNUMBER = {1841091 (2002g:14001)},
MRREVIEWER = {Mark Gross},
}

 \bib{DDRP12}{article}{
label={DDRP12},
author={A. Dickenstein},
author={S. Di Rocco},
author={R. Piene},
 title={Higher order duality and toric embeddings},
 journal={to appear in Annales de l'Institut Fourier }}
 
 \bib{DFS07}{article}{
label={DFS07},
    AUTHOR = {A. Dickenstein},
    author={ E. M. Feichtner},
    Autor={B. Sturmfels},
     TITLE = {Tropical discriminants},
   JOURNAL = {J. Amer. Math. Soc.},
  FJOURNAL = {Journal of the American Mathematical Society},
    VOLUME = {20},
      YEAR = {2007},
    NUMBER = {4},
     PAGES = {1111--1133},
}

\bib{DN10}{article}{
label={DN10},
author={A. Dickenstein},
author={B. Nill},
title={A simple combinatorial criterion for projective toric manifolds with dual defect} 
journal={{\emph Mathematical Research Letters}, 17: 435-448, 2010.},}

\bib{DS02}{article}{
label={DS02},
    AUTHOR = {A. Dickenstein},
    author={ B. Sturmfels},
     TITLE = {Elimination theory in codimension 2},
   JOURNAL = {J. Symbolic Comput.},
  FJOURNAL = {Journal of Symbolic Computation},
    VOLUME = {34},
      YEAR = {2002},
    NUMBER = {2},
     PAGES = {119--135},
      ISSN = {0747-7171},
   MRCLASS = {14M25 (13P05 52B55 68W30)},
  MRNUMBER = {1930829 (2003h:14073)},
MRREVIEWER = {Margherita Barile},
       DOI = {10.1006/jsco.2002.0545},
       URL = {http://dx.doi.org/10.1006/jsco.2002.0545},
}

\bib{DS02}{article}{
label={DS02}
    AUTHOR = {A. Dickenstein},
    author={B. Sturmfels},
     TITLE = {Elimination theory in codimension 2},
   JOURNAL = {J. Symbolic Comput.},
  FJOURNAL = {Journal of Symbolic Computation},
    VOLUME = {34},
      YEAR = {2002},
    NUMBER = {2},
     PAGES = {119--135},
}

 \bib{DDRP12}{article}{
label={DDRP12},
author={A. Dickenstein},
author={S. Di Rocco},
author={R. Piene},
 title={Higher order duality and toric embeddings},
 journal={to appear in Annales de l'Institut Fourier }}

\bib{DDRP09}{article}{
label={DDRP09},
    AUTHOR = {A. Dickenstein},
   author={ S. Di Rocco},
    author={R. Piene},
     TITLE = {Classifying smooth lattice polytopes via toric fibrations},
   JOURNAL = {Adv. Math.},
  FJOURNAL = {Advances in Mathematics},
    VOLUME = {222},
      YEAR = {2009},
    NUMBER = {1},
     PAGES = {240--254},
}

\bib{DiR06}{article}{
label={DiR06}
    AUTHOR = {S. Di Rocco},
     TITLE = {Projective duality of toric manifolds and defect polytopes},
   JOURNAL = {Proc. London Math. Soc. (3)},
  FJOURNAL = {Proceedings of the London Mathematical Society. Third Series},
    VOLUME = {93},
      YEAR = {2006},
    NUMBER = {1},
     PAGES = {85--104},
}	
\bib{DiR01}{article}{
label={DiR01}
    AUTHOR = {S. Di Rocco},
     TITLE = {Generation of k-jets on toric varieties},
   JOURNAL = {Math. Z.},
  FJOURNAL = {},
    VOLUME = {231},
      YEAR = {1999},
    NUMBER = {},
     PAGES = {169--188},}

\bib{DRHNP13}{article}{
label={DRHNP13},
author={S. Di Rocco},
author={ B. Nill},
author={C. Haase},
author={A. Paffenholz},
title={Polyhedral Adjunction Theory},
journal={Algebra and Number Theory},
YEAR = {to appear},
    NUMBER = {}
pages={},
}

\bib{DRS04}{article}{
label={DRS04},
author={S. Di Rocco},
author={ A. J. Sommese},
title={Chern numbers of ample vector bundles on toric surfaces},
journal={Trans. Amer. Math. Soc.},
YEAR = {2004},
    NUMBER = {356, 2}
pages={587--598},
}

\bib{DRS01}{article}{
label={DRS01},
author={S. Di Rocco},
author={ A. J. Sommese},
title={Line bundles for which a projectivized jet bundle is a product},
journal={Proc. of A.M.S.},
YEAR = {2001},
    NUMBER = {129, 6}
pages={1659--1663},
}

\bib{E86}{article}{
label={E86},
author={L. Ein},
title={Varieties with small dual varieties},
journal={Inv. Math},
YEAR = {1986},
    NUMBER = {96}
pages={63--74},
}

\bib{EW}{book}{
label={EW},
    AUTHOR = {G. Ewald},
     TITLE = {Combinatorial convexity and algebraic geometry},
    SERIES = {Graduate Texts in Mathematics},
    VOLUME = {168},
 PUBLISHER = {Springer-Verlag},
   ADDRESS = {New York},
      YEAR = {1996},
     PAGES = {xiv+372},
}

 \bib{Fu90}{book}{
   label={Fuj90},
    AUTHOR = {T. Fujita},
     TITLE = {Classification Theories of Polarized Varieties},
    SERIES = {Lecture Note Ser.},
 PUBLISHER = {London Math. Soc,Cambridge Univ. Press, 155 },
         YEAR = {1990},}
         
         \bib{FUb}{book}{
label={FUb},
    AUTHOR = {W. Fulton},
     TITLE = {Intersection theory},
    SERIES = {Ergebnisse der Mathematik und ihrer Grenzgebiete. 3. Folge. A
              Series of Modern Surveys in Mathematics [Results in
              Mathematics and Related Areas. 3rd Series. A Series of Modern
              Surveys in Mathematics]},
    VOLUME = {2},
   EDITION = {Second},
 PUBLISHER = {Springer-Verlag},
   ADDRESS = {Berlin},
      YEAR = {1998},
     PAGES = {xiv+470},
      ISBN = {3-540-62046-X; 0-387-98549-2},
   MRCLASS = {14C17 (14-02)},
  MRNUMBER = {1644323 (99d:14003)},
       DOI = {10.1007/978-1-4612-1700-8},
       URL = {http://dx.doi.org/10.1007/978-1-4612-1700-8},
}

\bib{FU}{book}{
label={FU},
    AUTHOR = {W. Fulton},
     TITLE = {Introduction to toric varieties},
    SERIES = {Annals of Mathematics Studies},
    VOLUME = {131},
      NOTE = {The William H. Roever Lectures in Geometry},
 PUBLISHER = {Princeton University Press},
   ADDRESS = {Princeton, NJ},
      YEAR = {1993},
     PAGES = {xii+157},
      ISBN = {0-691-00049-2},
   MRCLASS = {14M25 (14-02 14J30)},
  MRNUMBER = {1234037 (94g:14028)},
MRREVIEWER = {T. Oda},
}

\bib{GKZ}{book}{
label={GKZ},
    AUTHOR = {I.M. Gel{\cprime}fand},
    author={ M.M. Kapranov},
    author={A. Zelevinsky},
         TITLE = {Discriminants, resultants, and multidimensional determinants},
    SERIES = {Mathematics: Theory \& Applications},
 PUBLISHER = {Birkh\"auser Boston Inc.},
   ADDRESS = {Boston, MA},
      YEAR = {1994},
     PAGES = {x+523},
      ISBN = {0-8176-3660-9},
   MRCLASS = {14N05 (13D25 14M25 15A69 33C70 52B20)},
  MRNUMBER = {1264417 (95e:14045)},
MRREVIEWER = {I. Dolgachev},
       DOI = {10.1007/978-0-8176-4771-1},
       URL = {http://dx.doi.org/10.1007/978-0-8176-4771-1},
}

\bib{GH79}{article}{
label={GH79},
    AUTHOR = {P. Griffiths}, 
    author={J. Harris},
     TITLE = {Algebraic geometry and local differential geometry},
   JOURNAL = {Ann. Sci. \'Ecole Norm. Sup. (4)},
  FJOURNAL = {Annales Scientifiques de l'\'Ecole Normale Sup\'erieure.
              Quatri\`eme S\'erie},
    VOLUME = {12},
      YEAR = {1979},
    NUMBER = {3},
     PAGES = {355--452},
      ISSN = {0012-9593},
     CODEN = {ENAQAF},
   MRCLASS = {53A20 (14C21 53A60)},
  MRNUMBER = {559347 (81k:53004)},
MRREVIEWER = {M. A. Akivis},
       URL = {http://www.numdam.org/item?id=ASENS_1979_4_12_3_355_0},
}
	
\bib{HNP09}{article}{
label={HNP09},
    AUTHOR = {C. Haase},  
    author={B. Nill},
    author={S. Payne},
     TITLE = {Cayley decompositions of lattice polytopes and upper bounds
              for {$h^*$}-polynomials},
   JOURNAL = {J. Reine Angew. Math.},
  FJOURNAL = {Journal f\"ur die Reine und Angewandte Mathematik. [Crelle's
              Journal]},
    VOLUME = {637},
      YEAR = {2009},
     PAGES = {207--216},
}
\bib{HA}{book}{
label={HA},
    AUTHOR = {R. Hartshorne},
     TITLE = {Algebraic geometry},
      NOTE = {Graduate Texts in Mathematics, No. 52},
 PUBLISHER = {Springer-Verlag},
   ADDRESS = {New York},
      YEAR = {1977},
     PAGES = {xvi+496},
      ISBN = {0-387-90244-9},
   MRCLASS = {14-01},
  MRNUMBER = {0463157 (57 \#3116)},
MRREVIEWER = {Robert Speiser},
}

\bib{LM00}{article}{
label={LM00},
    AUTHOR = {A. Lanteri, Antonio}
    author={R. Mallavibarrena},
     TITLE = {Higher order dual varieties of generically {$k$}-regular
              surfaces},
   JOURNAL = {Arch. Math. (Basel)},
  FJOURNAL = {Archiv der Mathematik},
    VOLUME = {75},
      YEAR = {2000},
    NUMBER = {1},
     PAGES = {75--80},
      ISSN = {0003-889X},
     CODEN = {ACVMAL},
   MRCLASS = {14C20 (14J25 14J60)},
  MRNUMBER = {1764895 (2001f:14015)},
MRREVIEWER = {Andrew J. Sommese},
       DOI = {10.1007/s000130050476},
       URL = {http://dx.doi.org/10.1007/s000130050476},
}

\bib{L94}{article}{
label={L94},
    AUTHOR = {J.M. Landsberg},
     TITLE = {On second fundamental forms of projective varieties},
   JOURNAL = {Invent. Math.},
  FJOURNAL = {Inventiones Mathematicae},
    VOLUME = {117},
      YEAR = {1994},
    NUMBER = {2},
     PAGES = {303--315},
      ISSN = {0020-9910},
     CODEN = {INVMBH},
   MRCLASS = {14N05},
  MRNUMBER = {1273267 (95g:14057)},
MRREVIEWER = {P. Abellanas},
       DOI = {10.1007/BF01232243},
       URL = {http://dx.doi.org/10.1007/BF01232243},
}

\bib{MT11}{article}{
label={MT11},
    AUTHOR = {Y. Matsui},
    author={K. Takeuchi},
     TITLE = {A geometric degree formula for {$A$}-discriminants and {E}uler
              obstructions of toric varieties},
   JOURNAL = {Adv. Math.},
  FJOURNAL = {Advances in Mathematics},
    VOLUME = {226},
      YEAR = {2011},
    NUMBER = {2},
     PAGES = {2040--2064},
      ISSN = {0001-8708},
     CODEN = {ADMTA4},
   MRCLASS = {14M25},
  MRNUMBER = {2737807 (2012e:14103)},
       DOI = {10.1016/j.aim.2010.08.020},
       URL = {http://dx.doi.org/10.1016/j.aim.2010.08.020},
}
\bib{MU02}{article}{
label={MU02},
    AUTHOR = {M. Mustata},

     TITLE = {Vanishing theorems on toric varieties},
   JOURNAL = {Tohoku Math. J.},
    VOLUME = {54},
      YEAR = {2002},
    NUMBER = {3},
     PAGES = {4451--470},
}

\bib{ODA}{incollection}{
label={ODA},
    AUTHOR = {T. Oda},
     TITLE = {Convex bodies and algebraic geometry---toric varieties and
              applications. {I}},
 BOOKTITLE = {Algebraic {G}eometry {S}eminar ({S}ingapore, 1987)},
     PAGES = {89--94},
 PUBLISHER = {World Sci. Publishing},
   ADDRESS = {Singapore},
      YEAR = {1988},
   MRCLASS = {14L32 (52A25 52A43)},
  MRNUMBER = {966447 (89m:14026)},
MRREVIEWER = {G. Ewald},
}

\bib{ODAb}{book} {
label={ODAb}
    AUTHOR = {T. Oda},
     TITLE = {Convex bodies and algebraic geometry},
    SERIES = {Ergebnisse der Mathematik und ihrer Grenzgebiete (3) [Results
              in Mathematics and Related Areas (3)]},
    VOLUME = {15},
      NOTE = {An introduction to the theory of toric varieties,
              Translated from the Japanese},
 PUBLISHER = {Springer-Verlag},
   ADDRESS = {Berlin},
      YEAR = {1988},
     PAGES = {viii+212},
      ISBN = {3-540-17600-4},
   MRCLASS = {14L32 (14-02 52A25 52A43)},
  MRNUMBER = {922894 (88m:14038)},
MRREVIEWER = {I. Dolgachev},
}

\bib{Pa13}{article}{
label={Pa13},
author={A. Paffenholz},
title={Finiteness of the polyhedral Q-codegree spectrum},
journal={	arXiv:1301.4967 [math.CO]},}

\bib{Re83}{incollection}{
label={Re83},
    AUTHOR = {M. Reid},
     TITLE = {Decomposition of toric morphisms},
 BOOKTITLE = {Arithmetic and Geometry},
     PAGES = {395--418},
 PUBLISHER = {Progress in Math. 36, Birkh\"auser},
   ADDRESS = {Boston},
      YEAR = {1983},
   }

\end{biblist}
\end{bibdiv}

\raggedright

\end{document}